\newcommand{\closure}[1]{\overline{#1}}
\newcommand{\norm}[1]{\left \lVert#1\right \rVert}
\newtheorem{lemma}{Lemma}
\newtheorem{remark}[lemma]{Remark}
\newtheorem{theorem}[lemma]{Theorem}
\newtheorem{corollary}[lemma]{Corollary}
\newtheorem{definition}[lemma]{Definition}
\begin{document}
	
	\title{Perturbation theory and linear partial differential equations with delay}
	
	\author{}

	\author{Ismail T. Huseynov$^1$, Nazim I. Mahmudov$^1$
		\bigskip
		\\
		\small{$^1$Department of Mathematics, Eastern Mediterranean University, Mersin 10, 99628,\\ Turkish Republic of Northern Cyprus, Turkey,\\ e-mail adresses: $ismail.huseynov@emu.edu.tr$, $nazim.mahmudov@emu.edu.tr$}
		
		\smallskip
	}
	
	\maketitle
	\begin{abstract}
	\noindent Functional evolution equations are used in the modeling of numerous physical processes. In this work, our main tool is perturbation theory of strongly continuous semigroups. The advantage of this technique is that one can provide functional evolution equations with the explicit representation formulas of the solution. First, we introduce a closed form of the fundamental solution of the evolution equation with a discrete delay using the delayed Dyson-Phillips series. Then we set up the analytical representation formulas of the classical solutions of linear homogeneous/non-homogeneous evolution equations with a constant delay in a Banach space. In the special case, when a strongly continuous group $\left\lbrace \mathcal{T}(t)\right\rbrace_{t\in \mathbb{R}}$ commutes with a bounded linear operator $A_{1}$, we obtain an elegant formula for the fundamental solution using the powers of the resolvent operator of $A_{0}$.
	Furthermore, we consider delay evolution equations with permutable/non-permutable linear bounded operators and derive crucial results in terms of non-commutative analysis. Finally, we present an example, in the context of a one-dimensional heat equation with a discrete delay to demonstrate the applicability of our theoretical results and give some comparisons with existing results.
	\end{abstract}
	\textbf{Keywords:}
	Delay evolution equation; perturbation theory; a discrete delay; a delayed Dyson-Phillips series; heat equation with delay

	\tableofcontents

\section{Introduction}
 \label{Sec:intro}
 Differential equations with time delays are called \textit{functional differential equations} or \textit{time-delay systems}. A distinguishing feature of the functional differential equations under consideration is that the evolution rate of the processes described by such equations depends on the \textit{pre-history}. Time-delay systems are central to all areas of science, particularly the physical and biological sciences, such as immunology, medicine, nuclear power generation, heat transfer, track signal processing, regulation systems, etc. (see, \cite{Kolmanovskii-Myshkis}-\cite{Bellman-Cooke}).
 
 One of the main problems of qualitative theory for functional differential equations is the explicit representation formulas for the solutions of linear delay differential equations in finite and infinite dimensional spaces. Let us briefly summarise what has been done to solve the problem of representing solutions of linear time-delay systems in $\mathbb{R}^{n}$ by \textit{delayed matrix-valued functions}. We mention the pioneering work \cite{Khusainov-ordinary-1,Khusainov-ordinary-2} in which the first results for linear time-delay systems were found. In \cite{Khusainov-ordinary-1}, Khusainov and Shuklin studied the problem of relative controllability for a linear control system with a single constant delay using the \textit{pure delayed exponential matrix function}. In \cite{Khusainov-ordinary-2}, Khusainov et al. proposed an explicit representation formula of the solution for a linear delay differential system with \textit{permutable} matrix coefficients.
 Furthermore, in \cite{huseynov-mahmudov,mahmudov}, the classical results are extended to time-delay systems of fractional order with \textit{permutable} \cite{huseynov-mahmudov} and \textit{non-permutable} \cite{mahmudov} matrices using \textit{delayed perturbation of Mittag-Leffler matrix functions}.

 In recent years, there has been increasing interest in the study of partial differential equations with delay\cite{Khusainov-partial-1}-\cite{Samoilenko}. Partial differential equations of the parabolic type with a delayed argument are widely used to model and study various problems that arise in the study of population dynamics in ecological systems (see, \cite{ecology}). In \cite{Khusainov-partial-1,Khusainov-Pokojovy-Racke}, Khusainov et al. studied the existence and uniqueness of a classical solution to the initial-boundary value problem for the one-dimensional heat equation with a constant delay. For the construction of a solution, the authors in \cite{Khusainov-partial-1,Khusainov-Pokojovy} used the method of separation of variables or the Fourier method. In \cite{Samoilenko}, Samoilenko and Serheeva have described an algorithm for the construction of global solutions of the delayed one-dimensional heat equation with time-varying coefficients.
 
 \textit{Perturbation theory} for \textit{strongly continuous operator families} is a useful tool for \textit{evolution equations}, in particular, for \textit{partial differential equations} in modeling many physical phenomena \cite{philips,perturbation,per-frac-2}. The perturbation of linear operators in a Banach space has been studied to a considerable extent, most notably by Phillips \cite{philips}, Travis \& Webb \cite{travis-webb}, and Lutz \cite{lutz}. In \cite{philips}, Phillips first studied the implication for a linear abstract Cauchy problem with the infinitesimal generator $A_{0}:\mathcal{D}\left( A_{0}\right)\subseteq X\to X$ of a \textit{strongly continuous semigroup} which is conserved under bounded perturbation $A_{1}\in\mathcal{L}\left(X\right)$ in a Banach space $X$:
 \begin{equation}\label{klassik}
 	\begin{cases}
 		\frac{\mathrm{d}}{\mathrm{d}t}u(t)=\left( A_{0}+A_{1}\right) u(t), \quad t \geq 0,\\
 		u(0)=x \in \mathcal{D}\left(A_{0} \right).
 	\end{cases}
 \end{equation}
 In \cite{travis-webb}, Travis and Webb have established sufficient conditions for \textit{perturbed strongly continuous cosine operator families}. In \cite{lutz}, Lutz has intended to perturb the infinitesimal generator $A_{0}$ by adding to it a linear time-varying bounded operator $A_{1}(\cdot): \mathbb{R} \to \mathcal{L}\left(X\right)$ and investigating some \textit{perturbation properties} for infinitesimal generators of \textit{strongly continuous cosine and sine operator families}. In terms of fractional sense, some perturbation results for fractional abstract initial value problems of order $1<\alpha < 2$ in \cite{per-frac-2,bazhlekova} have been studied in several aspects and the obtained results agree with the classical ones when $\alpha=2$.
 
 In the general case, \textit{partial differential equations with delay} can be written in an \textit{abstract way} as follows:
 \begin{equation}\label{deviate}
 	\begin{cases}
 		\frac{\mathrm{d}}{\mathrm{d}t}u(t)=A_{0}u(t)+A_{1}u_{t}, \quad t \geq 0,\\
 		u(0)=x,\\
 		u_{0}=f,
 	\end{cases}
 \end{equation}	
 where
 \begin{itemize}
 	\item $x\in X$, $X$ is a Banach space;
 	\item $A_{0}: \mathcal{D}\left(A_{0} \right) \subseteq X \to X$ is a closed and densely-defined linear operator;
 	\item $A_{1}: \mathbb{L}^{p}\left( [-\tau,0],X\right) \to X$ is the \textit{delay operator} which is a bounded linear operator;
 	\item $u(\cdot):[-\tau,\infty) \to X$ and $u_{t}(\cdot):[-\tau,0]\to X$ is the \textit{history function} defined by $u_{t}(s)=u(t+s)$ for $s\in [-\tau,0]$;
 	\item $f(\cdot): \mathbb{L}^{p}\left([-\tau,0], X\right)$ for $1\leq p<\infty$.
 \end{itemize}
 To analyse such equations, the first step is to choose a suitable \textit{state space}. There are two different directions to solve this kind of equations. One way is to apply the \textit{semigroup theoretical methods} in the state space $\mathcal{X}=\mathbb{C}\left([-\tau,0],X\right) $. In this case, the relation between the solutions of the delay equations \eqref{deviate} and a corresponding \textit{translation semigroup} has been widely studied (see, \cite{engel-nagel}, Section VI.6).
 
 In the second direction, Bátkai and Piazzera in \cite{batkai-piazzera-partial,batkai-book} have studied the \textit{equivalency} between the partial differential equation with delay \eqref{deviate} and the following abstract Cauchy problem, associated with the operator $\left(\mathcal{A},\mathcal{D}\left( \mathcal{A}\right) \right)$:
 \begin{equation}\label{deviate-1}
 	\begin{cases}
 		\frac{\mathrm{d}}{\mathrm{d}t}U(t)=\mathcal{A}U(t), \quad t \geq 0,\\
 		U(0)=\binom{x}{f}\in \mathcal{D}\left( \mathcal{A}\right),
 	\end{cases}
 \end{equation}
 on the \textit{product} state space $\mathcal{X}\coloneqq X\times \mathbb{L}^{p}\left([-\tau,0],X\right)$ with $U(t)=\binom{u(t)}{u_t}\in \mathcal{X}$ for $t\geq 0$, and a linear operator $\mathcal{A}=\begin{pmatrix}
 	A_{0} \quad  A_{1} \\ 0 \quad  \frac{\mathrm{d}}{\mathrm{d}s}  
 \end{pmatrix}$.
 Then, in \cite{batkai-piazzera-partial} using the \textit{Miyadera-Voigt's perturbation theorem}, the authors  give sufficient conditions for
 \begin{equation*}
 	\mathcal{A}=\mathcal{A}_{0}+\mathcal{A}_{1}=\begin{pmatrix}
 		A_{0} \quad  0 \\ 0 \quad  \frac{\mathrm{d}}{\mathrm{d}s}  
 	\end{pmatrix}+\begin{pmatrix}
 		0\quad  A_{1} \\ 0 \quad  0
 	\end{pmatrix}
 \end{equation*} 
 to be the infinitesimal generator of a \textit{$C_{0}$-semigroup} on a Banach space $\mathcal{X}$.  Note that this reformulation of the delay evolution equation \eqref{deviate} has the advantage that the question of its well-posedness is reduced to the question of whether or not the \textit{operator matrix} $\mathcal{A}$ generates a \textit{strongly continuous semigroup} on the Banach space $\mathcal{X}$.

 Motivated by Phillips \cite{philips} and Bátkai \& Piazzera \cite{batkai-piazzera-partial}, we consider the following abstract Cauchy problem for an evolution equation with a discrete delay in a Banach space $X$:
 \begin{equation} \label{non-homogeneous}
 	\begin{cases}
 		\frac{\mathrm{d}}{\mathrm{d}t}u(t)=A_{0}u(t)+A_{1}u(t-\tau)+g(t), \quad  t\geq 0,\\ 
 		u(t)=\varphi(t), \quad -\tau \leq t \leq 0,
 	\end{cases}
 \end{equation}
 where $A_{0}:\mathcal{D}\left( A_{0}\right) \subseteq X \to X$ is a unbounded linear operator, $A_{1} \in \mathcal{L}\left( X\right)$ and $\tau>0$ is a fixed time delay. Moreover, an initial function $\varphi(\cdot):[-\tau,0]\to X$ is describing the \textit{prehistory of the system} and a forcing term $g(\cdot):[0,\infty)\to X$ is describing the \textit{external forces of the system}.
 
 Unlike the authors in \cite{batkai-piazzera-partial}, we consider an abstract differential equation with a discrete delay \eqref{non-homogeneous} on the state space $X$ rather than the product state space.
 In this work, our main tool is the \textit{perturbation theory} for the strongly continuous semigroups and the advantage of this technique is that one can provide the functional evolution equation \eqref{non-homogeneous} with the explicit representation formulas of the solution. Exploiting the \textit{perturbation theorem} of Phillips \cite{philips}, we derive the closed-form of a \textit{fundamental solution} $ \mathcal{S}(t;\tau)$, $t\geq -\tau$ via a \textit{delayed Dyson-Phillips series}. Moreover, this result agrees with the \textit{delayed perturbation of an operator-valued exponential function} for a delay evolution equation with bounded linear operator coefficients. As an application of our theoretical results, we derive an explicit representation formula of the classical (mild) solution of the one-dimensional heat equation with a discrete delay and give some comparisons between the closed-form solutions and existing works such as \cite{Khusainov-partial-1,jde}.
 
 The paper includes significant updates in the \textit{theory of abstract delay differential equations} and is structured as follows. Section \ref{sect:prel} is a preparatory section in which we recall the main definitions and results from functional analysis, mainly, operator theory and evolution equations. Section \ref{sect:main} is devoted to the closed-form of a fundamental solution of the abstract Cauchy problem for a linear homogeneous delay evolution equation \eqref{main equation} in terms of a delayed Dyson-Phillips series. 
 In the special case, when a strongly continuous group $\left\lbrace \mathcal{T}(t)\right\rbrace_{t\in \mathbb{R}}$ commutes with a bounded linear operator $A_{1}$, we obtain an elegant formula for the fundamental solution using powers of the resolvent operator of $A_{0}$.
 Moreover, we provide the analytical representation formulas of classical solutions to linear homogeneous and non-homogeneous evolution equations with a discrete delay.
 In Section \ref{sect:bounded}, we consider functional evolution equations with linear bounded operators and some special cases of them. Thus, we propose closed-form solutions for delay evolution equations with permutable and non-permutable linear bounded operators.
 Finally, Section \ref{sect:partial} is devoted to the presentation of an illustrative example on one-dimensional heat equation with delay to show the efficiency and validity of the theoretical results, and we give some important comparisons with the existing works.
 
	\section{Mathematical description}\label{sect:prel}
	We embark on this section by briefly presenting some notations and fundamental preliminaries from functional analysis, especially, operator theory and abstract differential equations \cite{engel-nagel,arendt-batty,pazy,curtain-zwart} which are used in this paper.
	
	\textbf{Some notations}.
	Let us fix some notations. Let $X$ be a complex Banach space with norm $\norm{\cdot}$ and $\mathcal{L}\left(X\right)$ be the Banach algebra of all bounded linear operators on $X$ to itself. The identity and zero (null) operators on $X$ are denoted by $I\in \mathcal{L}\left(X \right)$ and $\Theta\in \mathcal{L}\left(X \right)$, respectively.
	
	Let $\mathbb{N}_{0}\coloneqq \mathbb{N}\cup \left\lbrace 0\right\rbrace$, $\mathbb{R}$ and $\mathbb{C}$ be the set of real and complex numbers, respectively,  $[-\tau,T]=[-\tau,0]\cup \bigcup\limits_{i=0}^{n}(i\tau,(i+1)\tau]\subset \mathbb{R}$ and $[0,T]=\left\lbrace 0\right\rbrace \cup \bigcup\limits_{i=0}^{n}(i\tau,(i+1)\tau]\subset \mathbb{R}$ where $T=(n+1)\tau$ is a pre-fixed positive number for a fixed $n\in \mathbb{N}_{0}$ and $\tau>0$.
	
	In the representation of \textit{delayed} operator-valued functions, we will use the following \textit{indicator} or \textit{characteristic} function $\mathds{1}_{t\geq n\tau}:\mathbb{R}\to\left\lbrace 0,1\right\rbrace$ defined by:
	
	\begin{equation*}
		\mathds{1}_{t\geq n\tau}= \begin{cases}
			1, \quad t\geq n\tau, \\
			0, \quad t<n\tau,
		\end{cases}
	\end{equation*}
	where $n\in \mathbb{N}_{0}$ and $\tau>0$.
	
	Alternatively, for the \textit{delayed} operator-valued functions, we will also use the \textit{ramp} or \textit{positive part} function $(t)_{+}:\mathbb{R}\to [0,\infty)$ defined by the formula:
	\begin{equation*}
		\left( t\right)_{+}=\max\left( t,0\right)=\begin{cases}
			t, \quad t\geq 0,\\
			0, \quad t<0.
		\end{cases}
	\end{equation*}

	\textbf{Semigroup theory.} \textit{$C_{0}$-semigroups} serve to describe the time evolution of autonomous linear systems. We provide some fundamental facts concerning $C_{0}$-semigroups, their generators and their applications to the \textit{abstract differential equations} in a Banach space $X$ which are used throughout this paper.
	
	We shall be concerned with a family of bounded linear operators $\left\lbrace \mathcal{T}(t)\right\rbrace_{t\geq 0}$ on a half-line $[0,\infty)$ to $\mathcal{L}\left(X\right)$ is called a \textit{$C_{0}$-semigroup} or a\textit{ strongly continuous semigroup} satisfying the following hypotheses:
	\begin{itemize}
		\item $\mathcal{T}(0)=I$;
		
		\item $\mathcal{T}(t+s)=\mathcal{T}(t)\mathcal{T}(s)$ for all $t,s \geq 0$;
		
		\item $\lim\limits_{t\to 0_{+}}\norm{\mathcal{T}(t)x-x}=0$ for every $x \in X$.
	\end{itemize}
	Note that the condition $(iii)$ is a condition of \textit{strong continuity} of the function $t\mapsto \mathcal{T}(t)$ at point $t=0$. Alternatively, the last condition can be given as follows:
	\begin{itemize}
		\item $\mathcal{T}(t)x$ is continuous in $t$ on $[0,\infty)$ for each $x\in X$.
	\end{itemize}
	If $\mathcal{T}(t)$ is defined for $t\in \mathbb{R}$, the second condition holds for all $t,s \in \mathbb{R}$ and the function $t\to \mathcal{T}(t)x$ is continuous with respect to $t$ on $\mathbb{R}$ for every $x\in X$, then $\left\lbrace   \mathcal{T}(t)\right\rbrace_{t\in \mathbb{R}}$ is called a \textit{$C_{0}$-group}. 
	
	A linear operator $A_{0}:X \to X$ defined by
	\begin{equation*}
		A_{0}x=\lim_{t \to 0_{+}} \frac{\mathcal{T}(t)x-x}{t}, \quad \text{for every} \quad x \in \mathcal{D}(A_{0}),
	\end{equation*}
	where
	\begin{equation*}
		\mathcal{D}(A_{0})=\left\lbrace x \in X: \quad \lim_{t \to 0_{+}} \frac{\mathcal{T}(t)x-x}{t} \quad \textit{exists} \right\rbrace,
	\end{equation*}
	is the \textit{infinitesimal generator} of the $C_{0}$-semigroup $\left\lbrace \mathcal{T}(t)\right\rbrace_{t\geq 0}$, defined on its domain $\mathcal{D}(A_{0})$.
	It is known that the \textit{infinitesimal generator} $A_{0}:\mathcal{D}(A_{0}) \subseteq X \to X$ of a strongly continuous semigroup $\left\lbrace \mathcal{T}(t)\right\rbrace_{t\geq 0}$ is a \textit{closed} and \textit{densely-defined} linear operator on $X$, i.e., $\closure{\mathcal{D}(A_{0})}=X$.
	
	For a strongly continuous semigroup $\left\lbrace \mathcal{T}(t)\right\rbrace_{t\geq 0}$, the following assertions are equivalent:
	\begin{itemize}
		\item  If $x \in \mathcal{D}(A_{0})$, then $\mathcal{T}(t)x \in \mathcal{D}(A_{0})$ and
		\begin{equation*}
			\frac{\mathrm{d}}{\mathrm{d}t}\mathcal{T}(t)x=A_{0}\mathcal{T}(t)x=\mathcal{T}(t)A_{0}x, \quad t\geq 0;
		\end{equation*}
		\item For every $x \in X$, one has
		\begin{equation*}
			\int_{0}^{t}\mathcal{T}(s)x\mathrm{d}s \in \mathcal{D}(A_{0}), \quad t\geq 0;
		\end{equation*}
		\item For every $t \in [0,\infty)$, the following identities hold true:
		\begin{align*}
			\mathcal{T}(t)x-x&=A_{0}\int_{0}^{t}\mathcal{T}(s)x\mathrm{d}s, \quad x \in X,\\
			&=\int_{0}^{t}\mathcal{T}(s)A_{0}x\mathrm{d}s, \quad x \in \mathcal{D}(A_{0}).
		\end{align*}
	\end{itemize} 
	
	It is known that for every strongly continuous semigroup $\left\lbrace \mathcal{T}(t)\right\rbrace_{t \geq 0}$, there exists constants $\omega \in\mathbb{R}$ and $M\geq 1$ such that
	\begin{equation}\label{exponentiall}
		\norm{\mathcal{T}(t)} \leq M \exp\left( \omega t\right) , \quad  t\geq 0.
	\end{equation}
	The \textit{resolvent} $\mathcal{R}(\lambda_0; A_{0})=\left( \lambda_{0}I-A_{0}\right)^{-1}$ of $A_{0}$ is defined on the \textit{resolvent set} $\rho(A_{0})$ for $A_{0}$:
	\begin{equation*}
		\rho(A_{0})=\left\lbrace \lambda_{0}\in \mathbb{C}:  \lambda_{0}I-A_{0}:\mathcal{D}\left( A_{0}\right) \to X \quad \textit{is bijective} \right\rbrace
	\end{equation*}
	and belongs to $\mathcal{L}\left(X\right)$.
	The \textit{resolvent} $\mathcal{R}(\lambda_0;A_{0})$ of $A_{0}$ satisfies the following identities:
	\begin{align*}
		&\left( \lambda_{0}I-A_{0}\right)\mathcal{R}(\lambda_0; A_{0})=I,\\
		&\mathcal{R}(\lambda_0; A_{0})\left( \lambda_{0}I-A_{0}\right)x=x, \quad x\in \mathcal{D}\left(A_{0}\right). 
	\end{align*}
	Moreover, if $Re(\lambda_0)>\omega$, then $\lambda_{0}\in \rho(A_{0})$, and 
	\begin{equation}\label{resolvent}
		\mathcal{R}(\lambda_0; A_{0})x=\int_{0}^{\infty}\exp\left(-\lambda_{0} t\right) \mathcal{T}(t)x\mathrm{d}t, \quad x \in X.
	\end{equation}
	By the formulae \eqref{exponentiall} and \eqref{resolvent}, we derive the following estimation:
	\begin{equation*}
		\norm{\mathcal{R}(\lambda_0; A_{0})}\leq \frac{M}{Re(\lambda_{0})-\omega}.
	\end{equation*}	
	
	In the particular case, if the \textit{infinitesimal generator} $A_{0}:X\to X$ is \textit{bounded}, i.e., there exists $M>0$ such that $\norm{A_{0}x}\leq M\norm{x}$ for all $x\in \mathcal{D}(A_{0})$, then the following relations hold true:
	\begin{itemize}
		\item The domain $\mathcal{D}(A_{0})$ is all of $X$, i.e., $\mathcal{D}(A_{0})=X$;
		
		\item The $C_{0}$-semigroup $\left\lbrace \mathcal{T}(t)\right\rbrace_{t\geq 0}$ is \textit{uniformly continuous} at the point $t=0$:
		\begin{equation*}
			\lim\limits_{t\to 0_{+}}\norm{\mathcal{T}(t)-I}=0.	
		\end{equation*}
		\item The $C_{0}$-semigroup is defined by the formula
		\begin{equation*}
			\mathcal{T}(t)=\exp\left( A_{0}t\right) =\sum_{k=0}^{\infty}A_{0}^{k}\frac{t^{k}}{k!}, \quad t \geq 0.
		\end{equation*}
	\end{itemize}
	
	Consider the following abstract Cauchy problem for a linear homogeneous \textit{evolution} equation in a Banach space $X$:
	\begin{equation}\label{abstract}
		\begin{cases}
			u^{\prime}(t)=A_{0}u(t), \quad t \geq 0,\\
			u(0)=x,
		\end{cases}
	\end{equation}
	where $A_{0} : \mathcal{D}(A_{0}) \subseteq X \to X$ is an infinitesimal generator of $C_{0}$-semigroup $\left\lbrace \mathcal{T}(t)\right\rbrace_{t \geq 0}$. 	
	Then,
	\begin{itemize}
		\item for every $x \in \mathcal{D}(A_{0})$, the function
		\begin{equation*}
			u(\cdot): [0,\infty) \ni t \mapsto u(t)=\mathcal{T}(t)x \in \mathcal{D}(A_{0})
		\end{equation*}
		is the unique \textit{classical} or \textit{strong} solution of \eqref{abstract} with initial value $x$.
		\item for every $x \in X$, the function
		\begin{equation*}
			u(\cdot): [0,\infty) \ni t \mapsto u(t)=\mathcal{T}(t)x \in X
		\end{equation*}
		is the unique \textit{mild} solution of \eqref{abstract} with initial value $x$.
	\end{itemize}	
	
	\textbf{Perturbation theory}. In many concrete situations, the evolution equation is given as a sum of several terms having different physical meaning and different mathematical properties. In such situations \textit{perturbation theory} plays a very useful tool in the hands of both analyst and physicist. In this part, we provide some fundamental properties on \textit{bounded time-independent perturbations} of $C_{0}$-semigroups (i.e., generator $A_{0}$ \textit{perturbed} by the \textit{bounded} linear operator $A_{1}$).
	
	If $A_{0}:\mathcal{D}(A_{0}) \subseteq X \to X$ is an infinitesimal generator of $C_{0}$-semigroup $\left\lbrace \mathcal{T}(t)\right\rbrace_{t \geq 0}$ and $A_{1}\in \mathcal{L}\left(X\right)$, then the semigroup of bounded linear operators $\left\lbrace \mathcal{S}(t)\right\rbrace_{t\geq 0}$ generated by  $A_{0}+A_{1}$ defined on $\mathcal{D}\left(A_{0}\right)$ can be represented by an \textit{absolutely} and \textit{uniformly} (in every \textit{compact} subset of $[0,\infty)$) \textit{convergent} series as follows:
	\begin{equation}\label{S}
		\mathcal{S}(t)=\sum_{n=0}^{\infty}S_{n}(t), \quad t\geq0,
	\end{equation} 
	where 
	\begin{align*}
		S_{0}(t)&=\mathcal{T}(t),\quad t\geq0,\\
		S_{n}(t)&=\int_{0}^{t}\mathcal{T}(t-s)A_{1}S_{n-1}(s)\mathrm{d}s,\\
		&=\int_{0}^{t}S_{n-1}(t-s)A_{1}\mathcal{T}(s)\mathrm{d}s,\quad n\in \mathbb{N},\quad t\geq 0.
	\end{align*}
	Moreover, the strongly continuous semigroup $\left\lbrace \mathcal{S}(t)\right\rbrace_{t\geq 0}$ for some $\omega \in\mathbb{R}$ and $M\geq 1$ is satisfying
	\begin{equation*}
		\norm{\mathcal{S}(t)}\leq M\exp( \omega_{1}t), \quad \omega_{1}\coloneqq \omega+M\norm{A_{1}}, \quad t\geq 0.
	\end{equation*}
	For $Re(\lambda_{0})>\omega_{1}$, we have
	\begin{equation*}
		\mathcal{R}(\lambda_0; A_{0}+ A_{1})x=\int_{0}^{\infty}\exp\left(-\lambda_{0} t\right) \mathcal{S}(t)x\mathrm{d}t, \quad x \in X.
	\end{equation*}
	If $A_{0}$ is an infinitesimal generator of $C_{0}$-semigroup $\left\lbrace \mathcal{T}(t)\right\rbrace_{t \geq 0}$ and $A_{1}$ is a bounded linear operator, then the \textit{resolvent} $\mathcal{R}(\lambda_0; A_{0}+ A_{1})$ of $A_{0}+ A_{1}$ satisfies the following identities:
	\begin{align*}
		\mathcal{R}(\lambda_0; A_{0}+ A_{1})x&=\sum_{n=0}^{\infty}\mathcal{R}(\lambda_0; A_{0})\Big[A_{1}\mathcal{R}(\lambda_0;A_{0})\Big]^{n}x\\
		&=\sum_{n=0}^{\infty}\Big[A_{1}\mathcal{R}(\lambda_0;A_{0})\Big]^{n}\mathcal{R}(\lambda_0; A_{0})x, \quad x\in X.
	\end{align*}
	
	Consider the following abstract Cauchy problem for a linear non-homogeneous evolution equation in a Banach space $X$:
	\begin{equation}\label{main-equation}
		\begin{cases}
			\frac{\mathrm{d}}{\mathrm{d}t}u(t)=A_{0}u(t)+A_{1}u(t)+g(t), \quad t\geq 0, \\
			u(0)=x.
		\end{cases}
	\end{equation}
	Let $A_{0}:\mathcal{D}\left(A_{0} \right)\subseteq X\to X$ be the infinitesimal generator of a $C_{0}$-semigroup $\left\lbrace \mathcal{T}(t)\right\rbrace_{t\geq 0}$, $A_{1}\in \mathcal{L}\left(X\right)$ and $g(\cdot)\in \mathbb{C}^{1}\left([0,\infty),X\right)$. Then, for each $x\in \mathcal{D}\left(A_{0}\right)$, there exists a unique continuously differentiable solution $u(\cdot):[0,\infty) \to X$ of \eqref{main-equation} which is satisfying $u(t)\in \mathcal{D}\left(A_{0}\right)$ for $t\in [0,\infty)$  with $u(0)=x$. This solution has a closed form:
	\begin{equation*}
		u(t)=\mathcal{S}(t)x+\sum_{n=0}^{\infty}w_{n}(t), \quad t\geq 0,
	\end{equation*}
	where $\mathcal{S}(t)$ is given by \eqref{S}, and 
	\begin{align*}
		w_{0}(t)&=\int_{0}^{t}\mathcal{T}(t-s)g(s)\mathrm{d}s, \quad t\geq 0,\\
		w_{n}(t)&=\int_{0}^{t}\mathcal{T}(t-s)A_{1}w_{n-1}(s)\mathrm{d}s,\\
		&=\int_{0}^{t}w_{n-1}(t-s)A_{1}\mathcal{T}(s)\mathrm{d}s,\quad n\in \mathbb{N},\quad t\geq 0.
	\end{align*}

\section{Main results: a delayed Dyson-Phillips series}\label{sect:main} 
In this section, first, we consider the following abstract Cauchy problem for a linear homogeneous evolution equation with a discrete delay $\tau>0$ in a Banach space $X$:
\begin{equation}\label{main equation}
	\begin{cases}
		\frac{\mathrm{d}}{\mathrm{d}t}u(t)=A_{0}u(t)+A_{1}u(t-\tau), \quad t\geq 0, \\
		u(t)=\varphi(t) \in X, \quad -\tau\leq t \leq 0,
	\end{cases}
\end{equation}
where $A_{0}:\mathcal{D}\left(A_{0} \right)\subseteq X\to X$ is the infinitesimal generator of a $C_{0}$-semigroup $\left\lbrace \mathcal{T}(t)\right\rbrace_{t\geq 0}$ of bounded linear operators, $A_{1}\in \mathcal{L}\left(X\right)$ and the initial function $\varphi(\cdot)\in \mathbb{C}\left([-\tau,0],X\right)$.

Our main aim is to determine a \textit{closed-form} of the \textit{classical (strong)} solution of an abstract Cauchy problem \eqref{main equation}.

\textit{A classical (strong) solution of the abstract Cauchy problem \eqref{main equation} is understood as an operator-valued function $u(\cdot):[-\tau,\infty)\to X$ -continuously defined for all $t\geq -\tau$, continuously differentiable for all $t\geq 0$, $u(t)\in\mathcal{D}\left(A_{0}\right)$ for all $t\geq 0$ with $\varphi(0)\in \mathcal{D}\left(A_{0}\right)$ and satisfying \eqref{main equation} for all $t\geq 0$}.

Meanwhile, this definition can be generalized to \textit{mild} sense as follows:

\textit{A mild solution of the abstract Cauchy problem \eqref{main equation} is understood as an operator-valued function $u(\cdot):[-\tau,\infty)\to X$ -continuously defined for all $t\geq -\tau$, continuously differentiable for all $t\geq 0$, $u(t)\in X$ for all $t\geq 0$ and satisfying \eqref{main equation} for all $t\geq 0$.}

The \textit{fundamental solution} of the abstract initial value problem \eqref{main equation} which is the main part of the solution of delay evolution equation can be defined as follows.
\begin{definition}
	If $\mathcal{S}(\cdot;\tau):[-\tau,\infty)\to \mathcal{L}(X)$ satisfies the following linear homogeneous abstract differential equation with linear operator coefficients:
	\begin{equation}\label{fundamental}
		\frac{\mathrm{d}}{\mathrm{d}t}\mathcal{S}(t;\tau)=A_{0}\mathcal{S}(t;\tau)+A_{1}\mathcal{S}(t-\tau;\tau), \quad t\geq 0,  
	\end{equation}
	under initial conditions 
	\begin{equation}\label{unit}
		\mathcal{S}(t;\tau)=\begin{cases} 
			\Theta,\quad -\tau \leq t <0,\\
			I, \quad t=0, 
		\end{cases}
	\end{equation}
	then $\mathcal{S}(t;\tau)$, $t \geq -\tau$ is called the corresponding \textit{fundamental solution} of an abstract differential equation \eqref{main equation} with a constant delay.
\end{definition}

\begin{remark}
	The fundamental solution $\mathcal{S}(t;\tau)$, $t\geq-\tau$ also satisfies the following differential equation with operator coefficients and initial conditions
	\begin{equation}\label{fundamental-1}
		\begin{cases}
			\frac{\mathrm{d}}{\mathrm{d}t}\mathcal{S}(t;\tau)=\mathcal{S}(t;\tau)A_{0}+\mathcal{S}(t-\tau;\tau)A_{1}, \quad t\geq 0,\\
			\mathcal{S}(t;\tau)=\begin{cases}
				\Theta, \quad -\tau \leq t <0,\\
				I, \quad t=0.
			\end{cases}
		\end{cases}
	\end{equation}
	This does not mean that $\mathcal{S}(t;\tau)$ commutes individually with the coefficient operators $A_{i}$, $i=0,1$ for any $t\in [0,\infty)$.
\end{remark}
\begin{proof}
	To verify this remark, it is sufficient to compare the Laplace image of the \textit{fundamental solution} as a solution of differential equation with operator coefficients \eqref{fundamental} with the abstract delay differential equation \eqref{fundamental-1}.
\end{proof}
\allowdisplaybreaks
A \textit{fundamental solution} is an operator-valued function with values in $\mathcal{L}(X)$ which can be found with the help of \textit{Laplace transform} technique.
Let $\lambda_{0} \in \rho(A_{0})$. Then,
applying the Laplace integral transform each side of \eqref{fundamental} with initial conditions \eqref{unit} and using integration by substitution, we obtain 
\begin{align*}
	\lambda_{0} \int_{0}^{\infty}\exp(-\lambda_{0} t)\mathcal{S}(t;\tau)x\mathrm{d}t-x
	&=A_{0}\int_{0}^{\infty} \exp(-\lambda_{0} t)\mathcal{S}(t;\tau)x\mathrm{d}t\\
	&+A_{1}\int_{0}^{\infty}\exp(-\lambda_{0} t)\mathcal{S}(t-\tau;\tau)x\mathrm{d}t\\
	&=A_{0}\int_{0}^{\infty}\exp(-\lambda_{0}t)\mathcal{S}(t;\tau)x\mathrm{d}t\\
	&+A_{1}\int_{-\tau}^{\infty}\exp(-\lambda_{0} (t+\tau))\mathcal{S}(t;\tau)x\mathrm{d}t\\
	&=A_{0}\int_{0}^{\infty}\exp(-\lambda_{0}t)\mathcal{S}(t;\tau)x\mathrm{d}t\\
	&+A_{1}\exp(-\lambda_{0} \tau)\int_{0}^{\infty}\exp(-\lambda_{0} t)\mathcal{S}(t;\tau)x\mathrm{d}t, \quad x \in X.
\end{align*}
Therefore, for sufficiently large $Re(\lambda_{0})$, the Laplace transform of a \textit{fundamental solution} is defined by
\begin{align}\label{laplace}
	\int_{0}^{\infty}\exp(-\lambda_{0}t)\mathcal{S}(t;\tau)x\mathrm{d}t=\Big(\lambda_{0} I-A_{0}- A_{1}\exp(-\lambda_{0} \tau)\Big)^{-1}x, \quad x \in X.
\end{align}
Moreover, the right-hand side of \eqref{laplace} is the perturbation of the infinitesimal generator $A_{0}$ with a bounded linear operator $\hat{A}_{1}\coloneqq A_{1}\exp(-\lambda_{0} \tau)$:
\begin{align}\label{laplace-1}
	\Big(\lambda_{0}I-A_{0}- A_{1}\exp(-\lambda_{0} \tau)\Big)^{-1}x=\mathcal{R}
	\left(\lambda_{0};A_{0}+ \hat{A}_{1}\right)x, \quad x \in X.
\end{align}
Therefore, for any $x\in X$, the Laplace transform of a \textit{fundamental solution} of \eqref{main equation} can be determined by the resolvent of $A_{0}+ \hat{A}_{1}$ as follows:
\begin{equation}\label{laplace-2}
	\int_{0}^{\infty}\exp(-\lambda_{0} t)\mathcal{S}(t;\tau)x\mathrm{d}t=\mathcal{R}
	\left(\lambda_{0};A_{0}+ \hat{A}_{1}\right)x, \quad \hat{A}_{1}\coloneqq A_{1}\exp(-\lambda_{0} \tau)\in \mathcal{L}(X).	
\end{equation}

The following lemma is given in more general case for closed linear operators and plays a significant role in the proof of Theorem \ref{main-hom}.
\begin{lemma}\label{lema}
	Let $A_{0}:\mathcal{D}(A_{0})\subseteq X\to X$ be closed linear operator on $X$ and assume $A_{1}\in \mathcal{L}(X)$ is such that $\norm{A_{1}\mathcal{R}\left( \lambda_{0};A_{0}\right) \exp(-\lambda_{0}\tau)}<1$ for some $\lambda_{0}\in \rho(A_{0})$. Then, $A_{0}+\hat{A}_{1}$ where $\hat{A}_{1}\coloneqq A_{1}\exp(-\lambda_{0}\tau)\in \mathcal{L}(X)$ is a closed linear operator with domain $\mathcal{D}(A_{0})$ and $\mathcal{R}\left( \lambda_{0};A_{0}+\hat{A}_{1}\right)$ exists, and the following identity holds true:
	\begin{equation}\label{laplas}
		\mathcal{R}\left( \lambda_{0};A_{0}+\hat{A}_{1}\right)=\sum_{n=0}^{\infty}\mathcal{R}(\lambda_{0};A_{0})\Big[A_{1}\mathcal{R}(\lambda_{0};A_{0})\Big]^{n}\exp(-n\lambda_{0}\tau).
	\end{equation}
\end{lemma}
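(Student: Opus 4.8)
The plan is to reduce everything to the classical Neumann-series inversion of $\lambda_{0}I-(A_{0}+\hat{A}_{1})$, treating $\hat{A}_{1}$ as a bounded perturbation of the closed operator $A_{0}$. Throughout I abbreviate $R:=\mathcal{R}(\lambda_{0};A_{0})=(\lambda_{0}I-A_{0})^{-1}\in\mathcal{L}(X)$. Since the scalar $\exp(-\lambda_{0}\tau)$ commutes with every operator, I first record that $\hat{A}_{1}R=A_{1}R\exp(-\lambda_{0}\tau)$, so the hypothesis $\norm{A_{1}R\exp(-\lambda_{0}\tau)}<1$ is exactly the statement $\norm{\hat{A}_{1}R}<1$, which is the only analytic input needed.

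First I would settle the structural claims. Because $\hat{A}_{1}\in\mathcal{L}(X)$ has domain all of $X$, the algebraic domain of $A_{0}+\hat{A}_{1}$ is $\mathcal{D}(A_{0})\cap X=\mathcal{D}(A_{0})$. Closedness is the standard fact that a bounded perturbation of a closed operator is closed: if $x_{k}\in\mathcal{D}(A_{0})$ with $x_{k}\to x$ and $(A_{0}+\hat{A}_{1})x_{k}\to y$, then boundedness of $\hat{A}_{1}$ gives $\hat{A}_{1}x_{k}\to\hat{A}_{1}x$, whence $A_{0}x_{k}\to y-\hat{A}_{1}x$; closedness of $A_{0}$ then forces $x\in\mathcal{D}(A_{0})$ and $(A_{0}+\hat{A}_{1})x=y$.

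Next I would produce the resolvent by a factorization on $\mathcal{D}(A_{0})$. Using $R(\lambda_{0}I-A_{0})=I$ there, one checks
\begin{equation*}
	\lambda_{0}I-(A_{0}+\hat{A}_{1})=\big(I-\hat{A}_{1}R\big)(\lambda_{0}I-A_{0}).
\end{equation*}
The right-most factor $\lambda_{0}I-A_{0}:\mathcal{D}(A_{0})\to X$ is a bijection since $\lambda_{0}\in\rho(A_{0})$, while $\norm{\hat{A}_{1}R}<1$ makes $I-\hat{A}_{1}R\in\mathcal{L}(X)$ invertible with $(I-\hat{A}_{1}R)^{-1}=\sum_{n=0}^{\infty}(\hat{A}_{1}R)^{n}$ convergent in operator norm. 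Hence $\lambda_{0}I-(A_{0}+\hat{A}_{1})$ is a bijection of $\mathcal{D}(A_{0})$ onto $X$ with bounded inverse, so $\lambda_{0}\in\rho(A_{0}+\hat{A}_{1})$ and
\begin{equation*}
	\mathcal{R}(\lambda_{0};A_{0}+\hat{A}_{1})=(\lambda_{0}I-A_{0})^{-1}\big(I-\hat{A}_{1}R\big)^{-1}=R\sum_{n=0}^{\infty}(\hat{A}_{1}R)^{n}.
\end{equation*}

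Finally I would extract the stated series by pulling the scalars out. Since $\hat{A}_{1}R=A_{1}R\exp(-\lambda_{0}\tau)$, one has $(\hat{A}_{1}R)^{n}=[A_{1}\mathcal{R}(\lambda_{0};A_{0})]^{n}\exp(-n\lambda_{0}\tau)$, so term by term $R(\hat{A}_{1}R)^{n}=\mathcal{R}(\lambda_{0};A_{0})[A_{1}\mathcal{R}(\lambda_{0};A_{0})]^{n}\exp(-n\lambda_{0}\tau)$, which is precisely \eqref{laplas}. I do not expect a genuine obstacle here; the only point demanding care is the bookkeeping with domains, namely that the factorization identity is asserted only on $\mathcal{D}(A_{0})$, where $\lambda_{0}I-A_{0}$ and its inverse compose to the identity, and that the unbounded factor is placed on the right so that its bounded inverse $R$ ends up on the left, matching the order of the operators in \eqref{laplas}.
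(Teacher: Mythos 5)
Your proof is correct and follows essentially the same route as the paper's: both rest on the factorization $\lambda_{0}I-(A_{0}+\hat{A}_{1})=\bigl(I-\hat{A}_{1}\mathcal{R}(\lambda_{0};A_{0})\bigr)(\lambda_{0}I-A_{0})$ on $\mathcal{D}(A_{0})$ together with the Neumann-series inversion of $I-\hat{A}_{1}\mathcal{R}(\lambda_{0};A_{0})$ under the hypothesis $\norm{\hat{A}_{1}\mathcal{R}(\lambda_{0};A_{0})}<1$. The only differences are presentational: you obtain two-sided invertibility at once by composing bijections, whereas the paper first verifies the right-inverse identity and then the left-inverse property via an argument that the range of the candidate resolvent is $\mathcal{D}(A_{0})$; you also spell out the closedness of $A_{0}+\hat{A}_{1}$, which the paper dismisses as obvious.
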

\begin{proof}
	It is obvious that $A_{0}+\hat{A}_{1}$ with $\hat{A}_{1}= A_{1}\exp(-\lambda_{0}\tau)\in \mathcal{L}(X)$ is a closed linear operator with domain $\mathcal{D}(A_{0})$.
	Since the Neumann series $\sum_{n=0}^{\infty}\Big[A_{1}\mathcal{R}(\lambda_{0};A_{0})\exp(-\lambda_{0}\tau)\Big]^{n}$ converges under the hypotheses $\norm{A_{1} \mathcal{R}(\lambda_{0};A_{0})\exp(-\lambda_{0}\tau)}<1$, we note that
	\begin{align*}
		\mathcal{R} &\equiv \sum_{n=0}^{\infty}\mathcal{R}(\lambda_{0};A_{0})\Big[A_{1}\mathcal{R}(\lambda_{0};A_{0})\Big]^{n}\exp(-n\lambda_{0}\tau)\\
		&=\mathcal{R}(\lambda_{0};A_{0})\sum_{n=0}^{\infty}\Big[A_{1}\mathcal{R}(\lambda_{0};A_{0})\exp(-\lambda_{0}\tau)\Big]^{n}\\
		&=\mathcal{R}(\lambda_{0};A_{0})\Big(I-A_{1}\mathcal{R}(\lambda_{0};A_{0})\exp(-\lambda_{0}\tau)\Big)^{-1},
	\end{align*}
	and hence that
	\begin{align}\label{rel}
		&\Big(\lambda_{0}I-A_{0}- A_{1}\exp(-\lambda_{0}\tau) \Big)\mathcal{R}\nonumber\\
		&=\Big(I- A_{1}\mathcal{R}(\lambda_{0};A_{0})\exp(-\lambda_{0}\tau) \Big)\mathcal{R}(\lambda_{0};A_{0})^{-1}\mathcal{R}\nonumber\\
		&=\Big(I- A_{1}\mathcal{R}(\lambda_{0};A_{0})\exp(-\lambda_{0}\tau)\Big)\Big(I-A_{1}\mathcal{R}(\lambda_{0};A_{0})\exp(-\lambda_{0}\tau)\Big)^{-1}\nonumber\\
		&=I.
	\end{align}
	
	Furthermore, the range of $\mathcal{R}$	is precisely $\mathcal{D}(A_{0})$ since the range of $\Big(I-A_{1}\mathcal{R}(\lambda_{0};A_{0})\exp(-\lambda_{0}\tau)\Big)^{-1}$ is $X$. Thus, for a given $x\in\mathcal{D}(A_{0})$ there exists a $y\in X$ such that $x=\mathcal{R}y$. Therefore, by  \eqref{rel}, we attain that
	\begin{align*}
		&\mathcal{R}\Big(\lambda_{0} I-\Big(A_{0}+ A_{1}\exp(-\lambda_{0}\tau)\Big)  \Big)x\\&=\mathcal{R}\Big(\lambda_{0} I-\Big(A_{0}+ A_{1}\exp(-\lambda_{0}\tau) \Big) \Big)\mathcal{R}y\\&=\mathcal{R}y\\&=x,
	\end{align*}		
	so that $\mathcal{R}$ is both a left and a right inverse. The proof is complete.	
\end{proof}

\begin{remark}
	Note that under the condition	$\norm{\mathcal{R}\left( \lambda_{0};A_{0}\right)A_{1} \exp(-\lambda_{0}\tau)}<1$ for some $\lambda_{0}\in \rho(A_{0})$, the following identity also holds true:
	\begin{equation*}
		\mathcal{R}\left( \lambda_{0};A_{0}+\hat{A}_{1}\right)=\sum_{n=0}^{\infty}\Big[\mathcal{R}(\lambda_{0};A_{0})A_{1}\Big]^{n}\mathcal{R}(\lambda_{0};A_{0})\exp(-n\lambda_{0}\tau).
	\end{equation*}
\end{remark}

The closed-form of a \textit{fundamental solution} to delay evolution equation \eqref{main equation} can be expressed with the help of \textit{a delayed Dyson-Phillips series}.
\begin{theorem}\label{main-hom}
	Let $A_{0}:\mathcal{D}\left( A_{0}\right) \subseteq X\to X$ be the infinitesimal generator of a strongly continuous semigroup of bounded linear operators $\left\lbrace \mathcal{T}(t)\right\rbrace_{t\geq 0}$ and $A_{1} \in \mathcal{L}(X)$. Then there is a unique one-parameter family of bounded linear operators $\mathcal{S}(t;\tau)$ strongly continuous on $[-\tau,\infty)$ such that $\mathcal{S}(t;\tau)=\Theta$, $-\tau \leq t <0$ and $\mathcal{S}(0;\tau)=I$; strongly continuously differentiable on $[0,\infty)$ to $\mathcal{L}\left(X\right) $ and satisfying
	\begin{equation}
		\frac{\mathrm{d}}{\mathrm{d}t}\mathcal{S}(t;\tau)=A_{0}\mathcal{S}(t;\tau)+A_{1}\mathcal{S}(t-\tau;\tau), \quad t\geq 0. 
	\end{equation}
	\allowdisplaybreaks
	This solution has an explicit representation formula:
	\begin{equation}\label{power}
		\mathcal{S}(t;\tau)=\sum_{n=0}^{\infty}S_{n}(t,n\tau)\mathds{1}_{t\geq n\tau}, \quad t\geq 0,
	\end{equation}
	where 
	\begin{align}\label{recursive}
		&S_{0}(t,0)=\mathcal{T}(t),\quad t\geq 0,\nonumber\\
		&S_{n}(t,n\tau)=\int_{n\tau}^{t}\mathcal{T}(t-s)A_{1}S_{n-1}(s-\tau,(n-1)\tau)\mathrm{d}s, \quad t\geq n\tau, \quad n \in \mathbb{N}.
	\end{align}
\end{theorem}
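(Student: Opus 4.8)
The plan is to exhibit the series \eqref{power} as the (strong) inverse Laplace transform of the resolvent $\mathcal{R}(\lambda_{0};A_{0}+\hat{A}_{1})$ already identified in \eqref{laplace-2}, and then to confirm by the method of steps that the resulting operator family genuinely solves \eqref{fundamental} under the initial data \eqref{unit} with the stated regularity. Lemma \ref{lema} is exactly the device that converts the compact expression \eqref{laplace-2} into a Neumann-type series \eqref{laplas} whose individual terms I will match against the transforms of the summands $S_{n}(\cdot,n\tau)\mathds{1}_{\cdot\geq n\tau}$. I would begin by observing that \eqref{power} is \emph{locally finite}: since $S_{n}(t,n\tau)\mathds{1}_{t\geq n\tau}$ vanishes for $t<n\tau$, only the indices $n\leq t/\tau$ contribute at a given $t$, so on any compact interval \eqref{power} is a finite sum and no convergence problem arises there. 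To control the growth on all of $[0,\infty)$ (needed for the transform), I would prove by induction on $n$, using $\norm{\mathcal{T}(t)}\leq M\exp(\omega t)$ from \eqref{exponentiall} and the recursion \eqref{recursive}, the estimate
\begin{equation*}
	\norm{S_{n}(t,n\tau)}\leq M^{n+1}\norm{A_{1}}^{n}\frac{(t-n\tau)^{n}}{n!}\exp\big(\omega(t-n\tau)\big),\qquad t\geq n\tau,
\end{equation*}
the inductive step being a direct evaluation of the convolution integral in \eqref{recursive}. Summation yields an exponential bound of the form $\norm{\mathcal{S}(t;\tau)}\leq M\exp\big((\omega+M\norm{A_{1}})t\big)$; strong continuity follows since each summand is strongly continuous and vanishes at $t=n\tau$, matching the indicator, and the initial data \eqref{unit} are immediate from $S_{0}(0,0)=\mathcal{T}(0)=I$.

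Next I would compute the Laplace transform $H_{n}(\lambda_{0})\coloneqq\int_{0}^{\infty}\exp(-\lambda_{0}t)h_{n}(t)\,\mathrm{d}t$ of each summand $h_{n}(t)\coloneqq S_{n}(t,n\tau)\mathds{1}_{t\geq n\tau}$. The point is that \eqref{recursive} can be rewritten as the convolution $h_{n}(t)=\int_{0}^{t}\mathcal{T}(t-s)A_{1}h_{n-1}(s-\tau)\,\mathrm{d}s$ on $[0,\infty)$, since the indicator carried by $h_{n-1}$ reproduces exactly the lower limit $n\tau$ in \eqref{recursive}. Because $h_{n-1}$ vanishes on $(-\infty,(n-1)\tau)$, the shift rule contributes a clean factor $\exp(-\lambda_{0}\tau)$ with no boundary term, and the convolution theorem together with $\int_{0}^{\infty}\exp(-\lambda_{0}t)\mathcal{T}(t)\,\mathrm{d}t=\mathcal{R}(\lambda_{0};A_{0})$ from \eqref{resolvent} gives $H_{n}(\lambda_{0})=\mathcal{R}(\lambda_{0};A_{0})A_{1}\exp(-\lambda_{0}\tau)H_{n-1}(\lambda_{0})$. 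Starting from $H_{0}(\lambda_{0})=\mathcal{R}(\lambda_{0};A_{0})$, induction yields $H_{n}(\lambda_{0})=\mathcal{R}(\lambda_{0};A_{0})\big[A_{1}\mathcal{R}(\lambda_{0};A_{0})\big]^{n}\exp(-n\lambda_{0}\tau)$. For $\mathrm{Re}(\lambda_{0})$ large the hypothesis of Lemma \ref{lema} holds (as $\norm{\mathcal{R}(\lambda_{0};A_{0})}\to 0$), so summing and invoking \eqref{laplas} and \eqref{laplace-2}, after interchanging sum and integral (legitimate by the exponential bound above), gives
\begin{equation*}
	\int_{0}^{\infty}\exp(-\lambda_{0}t)\,\mathcal{S}(t;\tau)x\,\mathrm{d}t=\mathcal{R}\big(\lambda_{0};A_{0}+\hat{A}_{1}\big)x,\qquad x\in X.
\end{equation*}
By the uniqueness theorem for Laplace transforms of strongly continuous, exponentially bounded families, this simultaneously identifies \eqref{power} with the fundamental solution and forces its uniqueness.

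It remains to verify that \eqref{power} is strongly differentiable and actually satisfies \eqref{fundamental}, and I expect this to be the main obstacle, precisely because $A_{0}$ is \emph{unbounded}: differentiating the convolution terms brings $A_{0}$ inside the integral, so one cannot simply differentiate term by term. I would handle it by the method of steps. On $[0,\tau]$ one has $\mathcal{S}(t;\tau)=\mathcal{T}(t)$, which satisfies $\frac{\mathrm{d}}{\mathrm{d}t}\mathcal{T}(t)x=A_{0}\mathcal{T}(t)x$ for $x\in\mathcal{D}(A_{0})$ while the delay term vanishes. Assuming inductively that $\sum_{k=0}^{n-1}S_{k}(\cdot,k\tau)$ is a classical solution on $[0,n\tau]$, the forcing $A_{1}\mathcal{S}(\cdot-\tau;\tau)$ on $[n\tau,(n+1)\tau]$ is assembled from pieces already known to be strongly continuously differentiable when applied to $x\in\mathcal{D}(A_{0})$, so that the added term $S_{n}(\cdot,n\tau)$ is exactly the variation-of-parameters correction $\int_{n\tau}^{t}\mathcal{T}(t-s)A_{1}\mathcal{S}(s-\tau;\tau)\,\mathrm{d}s$. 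The delicate step is to invoke the standard regularity theory for the inhomogeneous abstract Cauchy problem: with a $C^{1}$ forcing on $\mathcal{D}(A_{0})$ the convolution lies in $\mathcal{D}(A_{0})$ and is strongly $C^{1}$, with $\frac{\mathrm{d}}{\mathrm{d}t}S_{n}(t,n\tau)x=A_{0}S_{n}(t,n\tau)x+A_{1}\mathcal{S}(t-\tau;\tau)x$. Propagating this regularity through the grid points $t=n\tau$, and noting that there the equation \eqref{fundamental} holds with the right-hand derivative (which carries a genuine jump of size $A_{1}$ inherited from the jump of $\mathcal{S}$ at $t=0$, so continuity of the derivative can fail at the mesh points), completes the verification that \eqref{power} satisfies \eqref{fundamental}--\eqref{unit}. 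Combined with the Laplace-transform identification, this establishes existence, uniqueness, and the representation \eqref{power}.
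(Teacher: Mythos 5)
Your proposal follows the paper's own strategy in its main lines: your inductive bound $M^{n+1}\norm{A_{1}}^{n}\frac{(t-n\tau)^{n}}{n!}\exp(\omega(t-n\tau))$ is exactly the paper's estimate \eqref{exponential} rewritten, your term-wise transforms $H_{n}(\lambda_{0})=\mathcal{R}(\lambda_{0};A_{0})\big[A_{1}\mathcal{R}(\lambda_{0};A_{0})\big]^{n}\exp(-n\lambda_{0}\tau)$ are the same identities the paper obtains via Fubini and functionals, and the identification with $\mathcal{R}(\lambda_{0};A_{0}+\hat{A}_{1})$ through Lemma \ref{lema} and Laplace uniqueness is the paper's step 2. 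Where you genuinely differ is the verification step (method of steps plus the standard regularity theorem for the inhomogeneous Cauchy problem, versus the paper's term-by-term differentiation of the series), and your remark that the derivative of $\mathcal{S}(\cdot;\tau)$ jumps by $A_{1}$ at $t=\tau$, so that \eqref{fundamental} holds there only with the right-hand derivative, is correct and more careful than the paper's own statement. However, two steps need repair.

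First, the claim that the added term $S_{n}(\cdot,n\tau)$ equals the variation-of-parameters correction $\int_{n\tau}^{t}\mathcal{T}(t-s)A_{1}\mathcal{S}(s-\tau;\tau)\,\mathrm{d}s$ is true for $n=1$ but false for $n\geq 2$. Indeed, for $t\in(2\tau,3\tau]$ one has $\mathcal{S}(s-\tau;\tau)=S_{0}(s-\tau,0)+S_{1}(s-\tau,\tau)$ on the integration range, so
\begin{align*}
	\int_{2\tau}^{t}\mathcal{T}(t-s)A_{1}\mathcal{S}(s-\tau;\tau)\,\mathrm{d}s
	&=S_{2}(t,2\tau)+\int_{2\tau}^{t}\mathcal{T}(t-s)A_{1}\mathcal{T}(s-\tau)\,\mathrm{d}s,
\end{align*}
and the second integral equals $S_{1}(t,\tau)-\mathcal{T}(t-2\tau)S_{1}(2\tau,\tau)$, which is not $\Theta$ in general. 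The correct consequence of the recursion \eqref{recursive} is the summed integral equation
\begin{equation*}
	\mathcal{S}(t;\tau)=\mathcal{T}(t)+\int_{0}^{t}\mathcal{T}(t-s)A_{1}\mathcal{S}(s-\tau;\tau)\,\mathrm{d}s, \quad t\geq 0,
\end{equation*}
so the correction accrued on $[n\tau,(n+1)\tau]$ is $\mathcal{S}(t;\tau)-\mathcal{T}(t-n\tau)\mathcal{S}(n\tau;\tau)$, which updates every $S_{k}$, $1\leq k\leq n$, rather than merely appending $S_{n}$. The repair is to run your induction on the individual summands: for $x\in\mathcal{D}(A_{0})$, $S_{n}(\cdot,n\tau)x$ is the mild solution of $v^{\prime}(t)=A_{0}v(t)+A_{1}S_{n-1}(t-\tau,(n-1)\tau)x$ with $v(n\tau)=0$, whose forcing is strongly $C^{1}$ by the inductive hypothesis, so the regularity theorem you invoke makes it classical; summing these identities then yields \eqref{fundamental}. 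This is, in substance, what the paper does when it differentiates the series term by term.

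Second, your uniqueness argument has a gap: Laplace-transform uniqueness only compares functions that possess Laplace transforms, whereas a competing family satisfying the hypotheses of Theorem \ref{main-hom} is a priori only strongly continuous and differentiable, with no growth control. To use the transform you must first prove that any such solution is exponentially bounded (e.g., by a Gronwall argument applied to the integral equation above); the paper avoids this by a direct iteration argument showing that any solution with zero initial data vanishes identically. Alternatively, your own method of steps delivers uniqueness at no extra cost, since on each interval $[n\tau,(n+1)\tau]$ the classical solution of the inhomogeneous Cauchy problem with prescribed initial value and forcing is unique; but as written you attribute uniqueness solely to the transform identification, which does not suffice.
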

\begin{proof} We divide by the proof some parts:
	
	\textbf{1.} Since a \textit{fundamental solution} is satisfying initial conditions \eqref{unit}, i.e., $\mathcal{S}(t;\tau)=\Theta$, $-\tau \leq t <0$ and $\mathcal{S}(0;\tau)=I$, it is strongly continuous on the initial interval $[-\tau,0]$.
	
	It is obvious that $S_{0}(t,0)=\mathcal{T}(t)$ is strongly continuous on $[0,\infty)$ that $\norm{S_{0}(t,0)} \leq M \exp(\omega t)$ by \eqref{exponentiall}. Suppose $S_{n}(t,n\tau)\mathds{1}_{t\geq n\tau}$ is like-wise strongly continuous on $[0,\infty)$ and that
	\begin{equation}\label{exponential}
		\norm{S_{n}(t,n\tau)} \leq M\Big( M\norm{A_{1}}\exp(-\omega\tau)\Big) ^{n}\frac{(t-n\tau)^{n}}{n!}\exp(\omega t), \quad t\geq n\tau.
	\end{equation}	
	Then $\mathcal{T}(t-s)A_{1}S_{n}(s-\tau,n\tau)$ will be strongly continuous on $[(n+1)\tau,t]$ so that the integral defining  $S_{n+1}(t,(n+1)\tau)\mathds{1}_{t\geq (n+1)\tau}$ exists in the strong topology for $t\in [0,\infty)$, by induction principle. Furthermore, by \eqref{recursive} and \eqref{exponential}, we have:
	\begin{align*}
		\norm{S_{n+1}(t,(n+1)\tau)}&\leq M\norm{A_{1}}\int_{(n+1)\tau}^{t}\exp(\omega(t-s))\norm{S_{n}(s-\tau,n\tau)}\mathrm{d}s\\
		&\leq M \Big( M\norm{A_{1}}\exp(-\omega \tau) \Big)^{n+1}\exp(\omega t)\int_{(n+1)\tau}^{t}\frac{(s-(n+1)\tau)^{n}}{n!}\mathrm{d}s\\
		&=M \Big(M\norm{A_{1}}\exp(-\omega \tau)\Big)^{n+1}\frac{(t-(n+1)\tau)^{n+1}}{(n+1)!}\exp(\omega t),\quad t\geq (n+1)\tau.
	\end{align*}
	Finally, for $t_1<t_2$, we have
	\begin{align}\label{ineq}
		&\norm{S_{n+1}(t_2,(n+1)\tau)x-S_{n+1}(t_1,(n+1)\tau)x}\nonumber\\ \leq& \int_{(n+1)\tau}^{t_1}\norm{\Big( \mathcal{T}(t_2-s)-\mathcal{T}(t_1-s)\Big) A_{1}S_{n}(s-\tau,n\tau)x}\mathrm{d}s\nonumber\\
		+&\int_{t_1}^{t_2}\norm{\mathcal{T}(t_2-s)}\norm{A_{1}}\norm{S_{n}(s-\tau,n\tau)x}\mathrm{d}s, \quad x\in X.
	\end{align}	
	As $t_1 \to t_2$, the integrand in the first term on the right of \eqref{ineq} converges to zero boundedly, the integrand of the second term is bounded and the second integral converges to zero boundedly, too. It follows that $S_{n+1}(t,(n+1)\tau)\mathds{1}_{t\geq (n+1)\tau}$ is strongly continuous on $[0,\infty)$. Therefore, with the help of mathematical induction principle, we have showed that $S_{n}(t,n\tau)\mathds{1}_{t\geq n\tau}$ is well-defined, strongly continuous, and satisfies \eqref{exponential} for all $n \in \mathbb{N}_{0}$. Hence, the series \eqref{power} is a strongly continuous function on $[0,\infty)$ with values in $\mathcal{L}(X)$.
	
	\textbf{2.} Meanwhile, by making use of the estimation \eqref{exponential} and closed-form of a \textit{pure delayed} exponential function, in accordance with the \textit{comparison test} for functional series, the series representing in \eqref{power} converges in the \textit{uniform operator topology uniformly} for $t$ in every compact subset of $[0,\infty)$:
	\begin{align*}
		\norm{\sum\limits_{n=0}^{\infty}S_{n}(t,n\tau)\mathds{1}_{t\geq n\tau}}&\leq \sum\limits_{n=0}^{\infty}\norm{S_{n}(t,n\tau)\mathds{1}_{t\geq n\tau}}\\
		&\leq M\exp(\omega t)\sum\limits_{n=0}^{\infty}\Big( M\norm{A_{1}}\exp(-\omega \tau)\Big)^{n}\frac{(t-n\tau)^{n}}{n!}\mathds{1}_{t\geq n\tau}\\
		&=M\exp(\omega t)\exp_{\tau}(\hat{\omega}t), \quad t\geq 0, \quad \hat{\omega}\coloneqq M\norm{A_{1}}\exp(-\omega \tau),
	\end{align*}
	where $\exp_{\tau}(\cdot):\mathbb{R}\to \mathbb{R}$ is a \textit{pure delayed} real-valued exponential function defined by
	\begin{equation*}
		\exp_{\tau}(t)=\sum\limits_{n=0}^{\infty}\frac{(t-n\tau)^{n}}{n!}\mathds{1}_{t\geq n\tau}, \quad t\in \mathbb{R}.
	\end{equation*}
	On the other hand, the series \eqref{power} is majorized by the series expansion of $M\exp(\omega_{1}t)$ where $\omega_{1}\coloneqq \omega +\hat{\omega}$ with $\hat{\omega}\coloneqq M\norm{A_{1}}\exp(-\omega \tau)$
	\begin{align*}
		\norm{\sum\limits_{n=0}^{\infty}S_{n}(t,n\tau)\mathds{1}_{t\geq n\tau}}
		&\leq M\exp(\omega t)\sum\limits_{n=0}^{\infty}\hat{\omega}^{n}\frac{(t-n\tau)^{n}}{n!}\mathds{1}_{t\geq n\tau}\\
		&\leq M\exp(\omega t)\sum\limits_{n=0}^{\infty}\hat{\omega}^{n}\frac{t^{n}}{n!}\\
		&=M\exp\Big(\Big(\omega+\hat{\omega}\Big)t\Big)\\
		&\coloneqq M \exp(\omega_{1}t), \quad t\geq 0.
	\end{align*}
	Then, for $Re(\lambda_{0})>\omega_1$, we can attain that	
	\begin{align*}
		\int_{0}^{\infty}\exp(-\lambda_{0}t)\sum_{n=0}^{\infty}\Big[S_{n}(t,n\tau)\mathds{1}_{t\geq n\tau}x\Big]\mathrm{d}t
		=&\sum_{n=0}^{\infty}\int_{0}^{\infty}\exp(-\lambda_{0}t)S_{n}(t,n\tau)\mathds{1}_{t\geq n\tau}x\mathrm{d}t\\=&\sum\limits_{n=0}^{\infty}\int_{n\tau}^{\infty}\exp(-\lambda_{0}t)S_{n}(t,n\tau)x\mathrm{d}t, \quad t\geq n\tau, \quad x\in X,
	\end{align*}	
	where the interchanging of the summation and integration is justified by the \textit{uniform convergence} of the series in the \textit{uniform operator topology}.
	
	Now, if $x^{*}\in X^{*}$, it is a consequence of the uniform convergence of the integral and of the Fubini's theorem that
	\allowdisplaybreaks
	\begin{align*}
		&x^{*}\Big[\int_{n\tau}^{\infty}\exp(-\lambda_{0}t)S_{n}(t,n\tau)x\mathrm{d}t\Big]\\
		&=\int_{n\tau}^{\infty}\exp(-\lambda_{0}t)x^{*}\Big[S_{n}(t,n\tau)x\Big]\mathrm{d}t\\
		&=\int_{n\tau}^{\infty}\exp(-\lambda_{0}t)\int_{n\tau}^{t}x^{*}\Big[\mathcal{T}(t-s)A_{1}S_{n-1}(s-\tau,(n-1)\tau)x\Big]\mathrm{d}s\mathrm{d}t\\
		&=\int_{n\tau}^{\infty}\exp(-\lambda_{0}s)\int_{s}^{\infty}\exp(-\lambda_{0}(t-s))x^{*}\Big[\mathcal{T}(t-s)A_{1}S_{n-1}(s-\tau,(n-1)\tau)x\Big]\mathrm{d}t\mathrm{d}s\\
		&=\int_{n\tau}^{\infty}\exp(-\lambda_{0}s)\int_{0}^{\infty}\exp(-\lambda_{0}t)x^{*}\Big[\mathcal{T}(t)A_{1}S_{n-1}(s-\tau,(n-1)\tau)x\Big]\mathrm{d}t\mathrm{d}s\\
		&=\int_{n\tau}^{\infty}\exp(-\lambda_{0}s)x^{*}\Big[\mathcal{R}\left( \lambda_{0};A_{0}\right)A_{1} S_{n-1}(s-\tau,(n-1)\tau)x\Big]\mathrm{d}s\\
		&=x^{*}\Big[\mathcal{R}\left( \lambda_{0};A_{0}\right)A_{1}\left\lbrace \int_{n\tau}^{\infty}\exp(-\lambda_{0}s)S_{n-1}(s-\tau,(n-1)\tau)x\mathrm{d}s\right\rbrace \Big], \quad t\geq n\tau,\quad x\in X.
	\end{align*}
	Hence, by induction, we  derive that
	\begin{equation*}
		\int_{n\tau}^{\infty}\exp(-\lambda_{0}t)S_{n}(t,n\tau)x\mathrm{d}t=\mathcal{R}\left( \lambda_{0};A_{0}\right)\Big[ A_{1}\mathcal{R}\left( \lambda_{0};A_{0}\right)\Big]^{n}\exp(-n\lambda_{0}\tau)x, \quad t\geq n\tau, \quad x\in X.
	\end{equation*}
	Therefore, the Laplace transform of the series \eqref{power} for any $t\in[0,\infty)$ is defined by
	\begin{equation*}
		\int_{0}^{\infty}\exp(-\lambda_{0}t)\Big[\sum_{n=0}^{\infty}S_{n}(t,n\tau)\mathds{1}_{t\geq n\tau}x\Big]\mathrm{d}t=\sum_{n=0}^{\infty}\mathcal{R}\left( \lambda_{0};A_{0}\right)\Big[ A_{1}\mathcal{R}\left( \lambda_{0};A_{0}\right)\Big]^{n}\exp(-n\lambda_{0}\tau)x, \quad x\in X.
	\end{equation*}
	On the other hand, for  $Re(\lambda_{0})>\omega_{1}=\omega+M\norm{A_{1}}\exp\left( -\omega \tau\right)$, we have
	\begin{align}\label{omega-1}
		\|A_{1}\mathcal{R}\left(\lambda_{0};A_{0}\right)\exp(-\lambda_{0}\tau)\|
		&=\norm{A_{1}}\norm{\int_{0}^{\infty}\exp(-\lambda_{0}(t+\tau))\mathcal{T}(t)\mathrm{d}t}\nonumber\\
		&=\norm{A_{1}}\norm{\int_{\tau}^{\infty}\exp(-\lambda_{0}t)\mathcal{T}(t-\tau)\mathrm{d}t}\nonumber\\
		&\leq \norm{A_{1}}\int_{0}^{\infty}\exp\Big(-Re(\lambda_{0})t\Big)\norm{\mathcal{T}(t-\tau)}\mathrm{d}t\nonumber\\
		&\leq M \norm{A_{1}}\exp(-\omega \tau)\int_{0}^{\infty}\exp\Big(-\left( Re(\lambda_{0})-\omega\right) t\Big)\mathrm{d}t\nonumber\\
		&=\frac{M \norm{A_{1}}\exp(-\omega \tau)}{Re(\lambda_{0})-\omega}<1.
	\end{align} 
	Therefore, for the infinitesimal generator $A_{0}$ of a strongly continuous semigroup of bounded linear operators $\left\lbrace \mathcal{T}(t)\right\rbrace_{t\geq 0}$ and the bounded linear operator $A_{1}$, by Lemma \ref{lema}, we have:
	\begin{align*}
		\mathcal{R}\left( \lambda_{0};A_{0}+\hat{A}_{1}\right)x&=\int_{0}^{\infty}\exp(-\lambda_{0}t)\mathcal{S}(t;\tau)x\mathrm{d}t\\
		&=\sum_{n=0}^{\infty}\mathcal{R}\left( \lambda_{0};A_{0}\right)\Big[ A_{1}\mathcal{R}\left( \lambda_{0};A_{0}\right)\Big]^{n}
		\exp(-n\lambda_{0}\tau)x, \quad x \in X.
	\end{align*}
	Therefore, for $Re(\lambda_{0})>\omega_{1}$, the Laplace integral transforms of both $\mathcal{S}(t;\tau)$ and $\sum_{n=0}^{\infty}
	S_{n}(t,n\tau)\mathds{1}_{t\geq n\tau}$ are equal on $[0,\infty)$ and hence by the uniqueness theorem of Laplace transform (\cite{engel-nagel}, pp. 530), these two functions are equal for any $t\in [0,\infty)$.
	Therefore, $\sum_{n=0}^{\infty}
	S_{n}(t,n\tau)\mathds{1}_{t\geq n\tau}$ converges to a strongly continuous function $\mathcal{S}(t;\tau)$ uniformly with respect to $t$ in the uniform operator topology on every compact subsets of $[0,\infty)$ and $\mathcal{S}(t;\tau)$ is a fundamental solution of \eqref{main equation} for $t\geq 0$, under the initial conditions \eqref{unit}.
	
	$\textbf{3.}$ Alternatively, we can prove that  $t \mapsto \mathcal{S}(t;\tau)$ satisfies \eqref{main equation} for all $t\geq 0$ with initial conditions $\mathcal{S}(t;\tau)=\Theta$, $-\tau \leq t<0$ and $\mathcal{S}(0;\tau)=I$. Since $\mathcal{T}(0)=I$, $S_{n}(0,n\tau)=\Theta$ for $n \in \mathbb{N}$, we have $\mathcal{S}(t;\tau)=\Theta$, $-\tau\leq t<0$ and $\mathcal{S}(0;\tau)=I$, i.e., the initial conditions \eqref{unit} are satisfied. Applying \eqref{power}, \eqref{recursive} and interchanging of the summation and integration which is justified by the uniform convergence of the series in the uniform operator topology, it follows that
	\allowdisplaybreaks
	\begin{align*}
		\mathcal{S}(t;\tau)&=\sum_{n=0}^{\infty}S_{n}(t,n\tau)\mathds{1}_{t\geq n\tau}
		=S_{0}(t,0)+\sum_{n=1}^{\infty}S_{n}(t,n\tau)\mathds{1}_{t\geq n\tau}\\
		&=\mathcal{T}(t)+\sum_{n=1}^{\infty}\int_{0}^{t}\mathcal{T}(t-s)A_{1}S_{n-1}(s-\tau,(n-1)\tau)\mathds{1}_{s\geq n\tau}\mathrm{d}s \\
		&=\mathcal{T}(t)+\int_{0}^{t}\mathcal{T}(t-s)A_{1}\sum_{n=1}^{\infty}S_{n-1}(s-\tau,(n-1)\tau)\mathds{1}_{s \geq n\tau}\mathrm{d}s \\
		&=\mathcal{T}(t)+\int_{0}^{t}\mathcal{T}(t-s)A_{1}\sum_{n=0}^{\infty}S_{n}(s-\tau,n\tau)\mathds{1}_{s\geq (n+1)\tau}\mathrm{d}s\\
		&=\mathcal{T}(t)+\int_{0}^{t}\mathcal{T}(t-s)A_{1}\mathcal{S}(s-\tau;\tau)\mathrm{d}s, \quad t\geq 0.
	\end{align*}
	First, we need to show that the function $[0,\infty) \ni t\mapsto \mathcal{S}(t,\tau) \in \mathcal{L}\left(X\right)$ is strongly continuously differentiable. Since $t\mapsto \mathcal{T}(t)$ is strongly continuously differentiable for all $t\geq0$, this implies that  $t\mapsto S_{0}(t,0)$ is strongly continuously differentiable for all $t\geq 0$. Assuming that this is true for $t \mapsto S_{n}(t,n\tau)$ for all $t\geq 0$. Then, by induction, it is easily shown to be true for $t \mapsto S_{n+1}(t,(n+1)\tau)$ for all $t\geq 0$.
	Furthermore, by Leibniz integral rule, we derive that 
	\begin{align*}
		&S_{0}^{\prime}(t,0)=A_{0}\mathcal{T}(t)=\mathcal{T}(t)A_{0}, \quad t\geq 0\\
		&S_{n}^{\prime}(t,n\tau)=\int_{n\tau}^{t}\mathcal{T}(t-s)A_{1}S_{n-1}^{\prime}(s-\tau,(n-1)\tau)\mathrm{d}s, \quad t\geq n\tau, \quad n \in \mathbb{N}.
	\end{align*}
	\allowdisplaybreaks
	By using \eqref{exponentiall}, for $x\in \mathcal{D}(A_{0})$, we obtain by induction from these equations the following bound:
	\begin{equation*}
		\norm{S_{n}^{\prime}(t,n\tau)x}\leq M\Big(M\norm{A_{1}}\exp(-\omega \tau)\Big)^{n}\exp(\omega t)\frac{(t-n\tau)^{n}}{n!}\norm{A_{0}x},\quad t\geq n\tau,\quad  n\in\mathbb{N}_{0}.
	\end{equation*}
	From this bound it follows that the series $\sum_{n=0}^{\infty}S_{n}^{\prime}(t,n\tau)\mathds{1}_{t\geq n\tau}$ converges uniformly in every compact interval of $[0,\infty)$ to continuous function which, by the usual argument, is $\mathcal{S}^{\prime}(t;\tau)$.
	
	To show $\mathcal{S}(t;\tau)$  is a fundamental solution of \eqref{main equation} for $t\geq 0$ under initial conditions \eqref{unit}, we differentiate last expression with the help of Leibniz integral rule (differentiation under integral sign), as follows:
	\begin{align*}
		\frac{\mathrm{d}}{\mathrm{d}t}\mathcal{S}(t;\tau)&=
		\frac{\mathrm{d}}{\mathrm{d}t}\Big[\mathcal{T}(t)+\int_{0}^{t}\mathcal{T}(t-s)A_{1}\mathcal{S}(s-\tau)\mathrm{d}s\Big]\\
		&=A_{0}\Big[\mathcal{T}(t)+\int_{0}^{t}\mathcal{T}(t-s)A_{1}\mathcal{S}(s-\tau;\tau)\mathrm{d}s\Big]+A_{1}\mathcal{S}(t-\tau;\tau)\\
		&=A_{0}\mathcal{S}(t;\tau)+A_{1}\mathcal{S}(t-\tau;\tau), \quad t\geq 0.
	\end{align*}
	
	$\textbf{4.}$ In the uniqueness proof, it will be sufficient to show that if $\mathcal{U}(t)$ solves the abstract functional differential equation \eqref{main equation} for $t\geq 0$ with zero initial conditions $\mathcal{U}(t)=0$ for $t\in[-\tau,0]$, then $\mathcal{U}(t)\equiv0$ for all $t\in [0,\infty)$, since $A_{0}$ is densely-defined in $X$.
	
	In other words, it is sufficient to consider a strongly continuously differentiable function $\mathcal{U}(t)$ on $[0,\infty)$ to $\mathcal{L}\left(X\right)$ such that $\mathcal{U}(t)=0$ for $-\tau\leq t\leq 0$ and $\frac{\mathrm{d}}{\mathrm{d}t}\mathcal{U}(t)=A_{0}\mathcal{U}(t)+A_{1}\mathcal{U}(t-\tau)$ for $t\geq 0$.
	Operating on both sides of this equation by $\mathcal{T}(t-s)$ and integrating on $[0,t]$ gives
	\begin{align}\label{int-1}
		\int_{0}^{t}\mathcal{T}(t-s)\frac{\partial}{\partial s}\mathcal{U}(s)\mathrm{d}s&=
		\int_{0}^{t}\mathcal{T}(t-s)A_{0}\mathcal{U}(s)\mathrm{d}s\nonumber\\&+
		\int_{0}^{t}\mathcal{T}(t-s)A_{1}\mathcal{U}(s-\tau)\mathrm{d}s, \quad t\geq 0.
	\end{align}
	It can be easily shown for $s\in[0,t]$ that
	\begin{align}\label{int-2}
		\frac{\partial}{\partial s}\Big[\mathcal{T}(t-s)\mathcal{U}(s)]=-\mathcal{T}(t-s)A_{0}\mathcal{U}(s)+\mathcal{T}(t-s)\frac{\partial}{\partial s}\mathcal{U}(s).
	\end{align}
	Therefore, by virtue of  \eqref{int-1}, \eqref{int-2} and the \textit{second fundamental theorem of calculus}, we attain that
	\begin{align}\label{last int}
		\mathcal{U}(t)=\int_{0}^{t}\mathcal{T}(t-s)A_{1}\mathcal{U}(s-\tau)\mathrm{d}s, \quad t\geq 0.
	\end{align}
	Let  $\mathcal{V}(t)\coloneqq \sup\left\lbrace \mathcal{U}(t+h): h\in [-\tau,0]\right\rbrace$. For a fixed time-delay $\tau>0$,  we are setting $m_{t}=\sup\left\lbrace \norm{\mathcal{V}(s)}: s \in [0,t]\right\rbrace $ and we see that 
	\begin{align*}
		m_{t}\leq M\norm{A_{1}}  \frac{e^{\omega t}-1}{\omega}m_{t},
	\end{align*}
	and for chosen sufficiently small $t$ such that $M\norm{A_{1}}  \frac{e^{\omega t}-1}{\omega}<1$. This implies that $m_{t}=0$. Thus, $\mathcal{U}(t)=0$ on $[0,t_{0}]$ with $t_{0}>0$. Iteration of this argument leads to $\mathcal{U}(t)\equiv0$ on $[0,\infty)$.
	
	Assume that a strongly continuously differentiable function  $\mathcal{C}(t;\tau)$ on $[0,\infty)$ to $\mathcal{L}\left(X\right)$ such that $\mathcal{C}(t;\tau)=\Theta$ for $-\tau\leq t<0$, $\mathcal{C}(0;\tau)=I$ and
	\begin{equation}\label{c}
		\frac{\mathrm{d}}{\mathrm{d}t}\mathcal{C}(t;\tau)=A_{0}\mathcal{C}(t;\tau)+A_{1}\mathcal{C}(t-\tau;\tau), \quad t\geq 0.
	\end{equation}
	For operating on both sides of \eqref{c} by $\mathcal{T}(t-s)$ and integrating on $[0,t]$ gives for any $t\geq$:
	\begin{align}\label{int-3}
		\int_{0}^{t}\mathcal{T}(t-s)\frac{\partial}{\partial s}\mathcal{C}(s;\tau)\mathrm{d}s&=
		\int_{0}^{t}\mathcal{T}(t-s)A_{0}\mathcal{C}(s;\tau)\mathrm{d}s\nonumber\\
		&+
		\int_{0}^{t}\mathcal{T}(t-s)A_{1}\mathcal{C}(s-\tau;\tau)\mathrm{d}s. 
	\end{align}
	It can be easily shown for $s\in[0,t]$ that
	\begin{align}\label{int-4}
		\frac{\partial}{\partial s}\Big[\mathcal{T}(t-s)\mathcal{C}(s;\tau)]=-\mathcal{T}(t-s)A_{0}\mathcal{C}(s;\tau)+\mathcal{T}(t-s)\frac{\partial}{\partial s}\mathcal{C}(s;\tau).
	\end{align}
	Therefore, from \eqref{int-3} and \eqref{int-4} and by the \textit{second theorem of fundamental calculus}, we obtain that
	\begin{align}
		\mathcal{C}(t;\tau)=\mathcal{T}(t)+\int_{0}^{t}\mathcal{T}(t-s)A_{1}\mathcal{C}(s-\tau;\tau)\mathrm{d}s, \quad t\geq 0.
	\end{align}
	On the other hand, the method of \textit{variation of constant formula} yields $\mathcal{S}(t;\tau)$ on $[0,\infty)$ to $\mathcal{L}\left(X\right)$ is also satisfying
	\begin{align}
		\mathcal{S}(t;\tau)=\mathcal{T}(t)+\int_{0}^{t}\mathcal{T}(t-s)A_{1}\mathcal{S}(s-\tau;\tau)\mathrm{d}s, \quad t\geq 0.
	\end{align}
	The difference $\mathcal{U}(t)=\mathcal{S}(t;\tau)-\mathcal{C}(t;\tau)$, $t\geq 0$ satisfies \eqref{last int} and vanishes at the points $t\in [-\tau,0]$. Thus, the uniqueness argument shows that this difference is identically zero for any $t \in [0,\infty)$.
	The proof is complete.
\end{proof}

\begin{remark}
	Note that the solution has can also represented by 
	\begin{equation*}
		\mathcal{S}(t;\tau)=\sum_{n=0}^{\infty}S_{n}(t,n\tau)\mathds{1}_{t\geq n\tau}, \quad t\geq 0,
	\end{equation*}
	via the following successive iterations:
	\begin{align*}
		&S_{0}(t,0)=\mathcal{T}(t),\quad t\geq 0,\nonumber\\
		&S_{n}(t,n\tau)=\int_{0}^{t-n\tau}S_{n-1}(t-s-\tau,(n-1)\tau)A_{1}\mathcal{T}(s)\mathrm{d}s, \quad t\geq n\tau, \quad n \in \mathbb{N}.
	\end{align*}
\end{remark}

If we consider abstract differential equation with a discrete delay \eqref{main equation} on $[-\tau,T]$ where $T=(n+1)\tau$ for a fixed $n\in \mathbb{N}_{0}$, then we can introduce a \textit{piece-wise} construction for a \textit{fundamental solution} $\mathcal{S}\left( \cdot;\tau\right):[-\tau,T] \to \mathcal{L}\left(X\right)$ of \eqref{main equation} under the initial conditions \eqref{unit}.

\begin{corollary}
	Let $A_{0}:\mathcal{D}\left( A_{0}\right) \subseteq X\to X$ be the infinitesimal generator of a strongly continuous semigroup of bounded linear operators $\mathcal{T}(t)$, $0\leq t\leq T$ and $A_{1} \in \mathcal{L}(X)$. Then, the abstract Cauchy problem for functional evolution equation \eqref{main equation} admits an uniquely determined strongly continuous fundamental solution $\mathcal{S}(\cdot;\tau):[-\tau,T] \to \mathcal{L}\left(X\right)$ on $[-\tau,T]$ which is satisfying  initial conditions $\mathcal{S}(t;\tau)=\Theta$, $-\tau\leq t<0$ and $\mathcal{S}(0;\tau)=I$ and strongly continuously differentiable on $[0,T]$ and
	\begin{equation}\label{son-1}
		\mathcal{S}(t;\tau)=\sum_{k=0}^{n}S_{k}(t,k\tau), \quad n\tau<t\leq  (n+1)\tau, \quad  n\in \mathbb{N}_{0},
	\end{equation}
	where 
	\begin{align*}
		S_{0}(t,0)&=\mathcal{T}(t),\quad t\geq 0, \nonumber\\
		S_{k}(t,k\tau)&=\int_{k\tau}^{t}\mathcal{T}(t-s)A_{1}S_{k-1}(s-\tau,(k-1)\tau)\mathrm{d}s\nonumber\\
		&=\int_{0}^{t-k\tau}S_{k-1}(t-s-\tau,(k-1)\tau)A_{1}\mathcal{T}(s)\mathrm{d}s, \quad t\geq k\tau, \quad  k=1,2,\ldots,n.
	\end{align*}
\end{corollary}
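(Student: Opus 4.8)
The plan is to obtain this Corollary directly from Theorem \ref{main-hom} by restricting the globally defined fundamental solution to the compact interval $[-\tau,T]$ and observing that, on each subinterval $n\tau<t\le(n+1)\tau$, the infinite delayed Dyson--Phillips series \eqref{power} truncates to a finite sum.

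First I would invoke Theorem \ref{main-hom}, which already produces a unique, strongly continuous, strongly continuously differentiable fundamental solution $\mathcal{S}(\cdot;\tau)$ on all of $[-\tau,\infty)$ satisfying \eqref{fundamental} under the initial conditions \eqref{unit}, together with the explicit representation \eqref{power}. Every qualitative claim of the Corollary---strong continuity on $[-\tau,T]$, strong continuous differentiability on $[0,T]$, the initial conditions $\mathcal{S}(t;\tau)=\Theta$ for $-\tau\le t<0$ and $\mathcal{S}(0;\tau)=I$, and the fact that $\mathcal{S}$ solves \eqref{main equation}---is then inherited verbatim by restriction, so the only genuine content is the collapse of the series to the finite sum \eqref{son-1}.

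The key step is the analysis of the indicator factors $\mathds{1}_{t\ge k\tau}$ on the fixed subinterval $n\tau<t\le(n+1)\tau$. For every index $k\le n$ one has $t>n\tau\ge k\tau$, so $\mathds{1}_{t\ge k\tau}=1$; for every index $k\ge n+2$ one has $t\le(n+1)\tau<k\tau$, so $\mathds{1}_{t\ge k\tau}=0$. The sole borderline index is $k=n+1$: here $\mathds{1}_{t\ge(n+1)\tau}$ vanishes on the open part $n\tau<t<(n+1)\tau$ and equals $1$ only at the right endpoint $t=(n+1)\tau$. However, by the recursion \eqref{recursive} the term $S_{n+1}(t,(n+1)\tau)=\int_{(n+1)\tau}^{t}\mathcal{T}(t-s)A_{1}S_{n}(s-\tau,n\tau)\mathrm{d}s$ reduces, at $t=(n+1)\tau$, to an integral over a degenerate interval and hence equals $\Theta$. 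Thus the $k=n+1$ contribution is zero throughout $(n\tau,(n+1)\tau]$, and the infinite sum \eqref{power} collapses exactly to $\sum_{k=0}^{n}S_{k}(t,k\tau)$, which is \eqref{son-1}.

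Finally, the two equivalent integral representations of $S_{k}(t,k\tau)$ displayed in the Corollary are precisely the forward and backward convolution forms of the recursion, whose equivalence was already recorded in the Remark following Theorem \ref{main-hom} (via the substitution $s\mapsto t-s-\tau$ inside the Duhamel integral); I would cite that Remark rather than redo the change of variables. Uniqueness on $[-\tau,T]$ is inherited directly: the difference of any two such fundamental solutions solves the homogeneous problem with zero initial data, and the small-time contraction estimate of Theorem \ref{main-hom}, iterated finitely many times to cover $[0,T]$, forces this difference to vanish. The only point demanding care---and hence the main, if modest, obstacle---is the endpoint bookkeeping at $t=(n+1)\tau$, where one must confirm that the would-be extra term $S_{n+1}$ vanishes so that the finite sum and the truncated infinite series agree at the closed right end of each subinterval.
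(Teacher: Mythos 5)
Your proposal is correct and follows essentially the same route as the paper, which states this corollary without proof as an immediate piece-wise restriction of Theorem \ref{main-hom}; your indicator-function bookkeeping on $n\tau<t\le(n+1)\tau$, including the endpoint check that $S_{n+1}\bigl((n+1)\tau,(n+1)\tau\bigr)=\Theta$, is exactly the justification the paper leaves implicit. One caveat: the equivalence of the two integral forms is not obtained by the substitution $s\mapsto t-s-\tau$ alone (a change of variables cannot swap the operator ordering $\mathcal{T}(t-s)A_{1}S_{k-1}$ into $S_{k-1}A_{1}\mathcal{T}(s)$; that requires the inductive Fubini argument underlying the classical Dyson--Phillips identity), but since you ultimately cite the paper's Remark for this fact rather than the substitution, your argument stands.
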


In the particular case, by using the \textit{powers} of $\mathcal{R}(\lambda_{0};A_{0})$, we can derive the following elegant representation formula for a fundamental solution $\mathcal{S}(t;\tau)$, $t\geq -\tau$ of \eqref{main equation} under the initial conditions \eqref{unit}.

The following theorem is given in more general case, for $C_{0}$-groups. If $C_{0}$-group $\left\lbrace \mathcal{T}(t)\right\rbrace_{t \in \mathbb{R}}$ commutes with $A_{1}\in \mathcal{L}(X)$, one can obtain more elegant formula for a fundamental solution as below.
\begin{theorem}\label{permutable-case}
	Let $A_{0}:\mathcal{D}\left( A_{0}\right) \subseteq X\to X$ be the infinitesimal generator of a strongly continuous group $\left\lbrace \mathcal{T}(t)\right\rbrace_{t \in \mathbb{R}}$ of bounded linear operators. If a strongly continuous group $\left\lbrace \mathcal{T}(t)\right\rbrace_{t \in \mathbb{R}}$ commutes with $A_{1}\in \mathcal{L}(X)$, then, the strongly continuous one-parameter family of bounded linear operators $\mathcal{S}(t;\tau)$ which is satisfying $\mathcal{S}(t;\tau)=\Theta$ for $-\tau \leq t <0$ and $\mathcal{S}(0;\tau)=I$, has a closed-form on $[0,\infty)$ as follows:
	\begin{align}\label{commute}
		\mathcal{S}(t;\tau)&=\exp_{\tau}^{[A_{1}\mathcal{T}(\tau)^{-1}]t}\mathcal{T}(t)=\sum\limits_{n=0}^{\infty}\Big[A_{1}\mathcal{T}(\tau)^{-1}\Big]^{n}\frac{(t-n\tau)^{n}}{n!}\mathds{1}_{t\geq n\tau}\mathcal{T}(t)\nonumber\\
		&=\mathcal{T}(t)\exp_{\tau}^{[A_{1}\mathcal{T}(\tau)^{-1}]t}=\mathcal{T}(t)\sum\limits_{n=0}^{\infty}\Big[A_{1}\mathcal{T}(\tau)^{-1}\Big]^{n}\frac{(t-n\tau)^{n}}{n!}\mathds{1}_{t\geq n\tau}, \quad t\geq0.
	\end{align}
\end{theorem}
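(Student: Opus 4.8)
The plan is to build directly on the series representation \eqref{power}--\eqref{recursive} furnished by Theorem \ref{main-hom}, whose convergence, strong continuity and strong differentiability are already guaranteed there; only the \emph{form} of each term $S_n(t,n\tau)$ remains to be identified in the commuting setting. First I would record the two structural facts I intend to exploit: since $\left\lbrace \mathcal{T}(t)\right\rbrace_{t\in\mathbb{R}}$ is a $C_0$-group, each $\mathcal{T}(t)$ is boundedly invertible with $\mathcal{T}(t)^{-1}=\mathcal{T}(-t)$, so in particular $\mathcal{T}(\tau)^{-1}$ is a well-defined bounded operator; and the hypothesis that $\mathcal{T}(t)$ commutes with $A_1$ for every $t\in\mathbb{R}$ implies that $A_1$, $\mathcal{T}(\tau)^{-1}=\mathcal{T}(-\tau)$ and every $\mathcal{T}(r)$ mutually commute, so that arbitrary products of these operators may be reordered freely.

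The core of the argument is the claim, proved by induction on $n$, that under these hypotheses
\begin{equation*}
	S_n(t,n\tau)=\Big[A_1\mathcal{T}(\tau)^{-1}\Big]^{n}\frac{(t-n\tau)^{n}}{n!}\mathcal{T}(t), \quad t\geq n\tau.
\end{equation*}
The base case $n=0$ is immediate from $S_0(t,0)=\mathcal{T}(t)$. For the inductive step I would substitute the hypothesis for $S_{n-1}(s-\tau,(n-1)\tau)$ (valid since $s\geq n\tau$ forces $s-\tau\geq(n-1)\tau$) into the recursion \eqref{recursive}, then use the group law $\mathcal{T}(t-s)\mathcal{T}(s-\tau)=\mathcal{T}(t-\tau)$ together with commutativity to pull the operator coefficient out of the integral; writing $\mathcal{T}(t-\tau)=\mathcal{T}(t)\mathcal{T}(\tau)^{-1}$ and reabsorbing the extra factor $\mathcal{T}(\tau)^{-1}$ into the coefficient produces exactly $\big[A_1\mathcal{T}(\tau)^{-1}\big]^{n}\mathcal{T}(t)$ in front, leaving the scalar integral $\int_{n\tau}^{t}\frac{(s-n\tau)^{n-1}}{(n-1)!}\mathrm{d}s=\frac{(t-n\tau)^{n}}{n!}$.

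Summing the closed form of $S_n(t,n\tau)$ against the indicators $\mathds{1}_{t\geq n\tau}$ as in \eqref{power} then reproduces precisely the pure delayed operator exponential $\exp_{\tau}^{[A_1\mathcal{T}(\tau)^{-1}]t}$ multiplied by $\mathcal{T}(t)$, and since $\mathcal{T}(t)$ commutes with every coefficient it may be placed on either side, which yields both representations in \eqref{commute}; the initial conditions $\mathcal{S}(t;\tau)=\Theta$ on $[-\tau,0)$ and $\mathcal{S}(0;\tau)=I$ are inherited from Theorem \ref{main-hom}. I expect the only delicate point to be the \emph{bookkeeping of commutativity}: one must check that the hypothesis that $\mathcal{T}(t)$ commutes with $A_1$ indeed propagates to $\mathcal{T}(-\tau)=\mathcal{T}(\tau)^{-1}$ and hence licenses every reordering used in the inductive step, which follows because $A_1\mathcal{T}(\tau)=\mathcal{T}(\tau)A_1$ gives, after left- and right-multiplication by $\mathcal{T}(\tau)^{-1}$, the identity $A_1\mathcal{T}(\tau)^{-1}=\mathcal{T}(\tau)^{-1}A_1$. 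As an independent check one could instead define the right-hand side of \eqref{commute}, differentiate it termwise (justified by the uniform bounds of Theorem \ref{main-hom}), verify that it satisfies \eqref{fundamental}--\eqref{unit}, and invoke the uniqueness already established there.
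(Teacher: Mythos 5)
Your proposal is correct, but it follows a genuinely different route from the paper's proof of this theorem. The paper works on the Laplace transform side: it invokes Lemma \ref{lema} to write $\mathcal{R}\left(\lambda_{0};A_{0}+\hat{A}_{1}\right)=\sum_{n\geq 0}\mathcal{R}(\lambda_{0};A_{0})\big[A_{1}\mathcal{R}(\lambda_{0};A_{0})\big]^{n}\exp(-n\lambda_{0}\tau)$, uses commutativity to collect this into $\sum_{n\geq 0}A_{1}^{n}\big[\mathcal{R}(\lambda_{0};A_{0})\big]^{n+1}\exp(-n\lambda_{0}\tau)$, identifies each term via the classical identity $\big[\mathcal{R}(\lambda_{0};A_{0})\big]^{n+1}x=\int_{0}^{\infty}\exp(-\lambda_{0}t)\frac{t^{n}}{n!}\mathcal{T}(t)x\,\mathrm{d}t$ (plus a shift) as the Laplace transform of $A_{1}^{n}\frac{(t-n\tau)^{n}}{n!}\mathcal{T}(t-n\tau)\mathds{1}_{t\geq n\tau}$, and then concludes by inverting the Laplace transform and applying the group law $\mathcal{T}(t-n\tau)=\mathcal{T}(t)\big[\mathcal{T}(\tau)^{-1}\big]^{n}$. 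You instead work entirely on the time side, proving by induction that each Dyson--Phillips term satisfies $S_{n}(t,n\tau)=\big[A_{1}\mathcal{T}(\tau)^{-1}\big]^{n}\frac{(t-n\tau)^{n}}{n!}\mathcal{T}(t)$ for $t\geq n\tau$; your inductive step (group law inside the recursion integral, commutativity of $A_{1}$ with $\mathcal{T}(\tau)^{-1}$, scalar integral $\int_{n\tau}^{t}\frac{(s-n\tau)^{n-1}}{(n-1)!}\mathrm{d}s=\frac{(t-n\tau)^{n}}{n!}$) is sound, and the domain check $s-\tau\geq (n-1)\tau$ is exactly the point that makes the substitution legitimate. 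Your route is more elementary --- it needs no resolvent series, no Laplace inversion, and no uniqueness theorem for Laplace transforms --- and it has the conceptual advantage of mirroring the paper's own proof of Theorem \ref{permutable} (the bounded permutable case), so the two results are established by one uniform mechanism. What the paper's route buys in exchange is the explicit link between the fundamental solution and the powers of $\mathcal{R}(\lambda_{0};A_{0})$, which is the structural observation advertised in the abstract; your fallback suggestion (verify the candidate satisfies \eqref{fundamental}--\eqref{unit} and invoke the uniqueness of Theorem \ref{main-hom}) is also a valid third option, though unnecessary given that your induction already identifies the terms of the series whose convergence Theorem \ref{main-hom} guarantees.
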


\begin{proof}
	Since a $C_{0}$-group $\mathcal{T}(\cdot):\mathbb{R} \to \mathcal{L}(X)$ commutes with $A_{1}\in \mathcal{L}\left(X\right)$, then by the virtue of Lemma \ref{lema} and relation \eqref{omega-1}, for $Re(\lambda_{0})>\omega_{1}=\omega+M\norm{A_{1}}\exp\left(-\omega \tau\right)$ the following identity holds true:
	\begin{align}\label{1}
		\mathcal{R}(\lambda_{0};A_{0}+\hat{A}_{1})&= \int_{0}^{\infty}\exp(-\lambda_{0} t)\mathcal{S}(t;\tau)\mathrm{d}t\nonumber\\&=\sum_{n=0}^{\infty}\mathcal{R}(\lambda_{0};A_{0})\Big[A_{1}\mathcal{R}(\lambda_{0};A_{0})\exp(-\lambda_{0}\tau)\Big]^{n}\nonumber\\
		&=\sum_{n=0}^{\infty}A_{1}^{n}\Big[\mathcal{R}(\lambda_{0};A_{0})\Big]^{n+1}\exp(-n\lambda_{0} \tau), \quad \hat{A}_{1}= A_{1}\exp(-\lambda_{0}\tau)\in \mathcal{L}(X).
	\end{align}
	It is known that [\cite{engel-nagel}, Corollary 1.11, pp. 56], for $Re(\lambda_{0})>\omega$ and $n\in \mathbb{N}_{0}$, the following identity is true:
	\begin{equation}\label{2}
		\Big[\mathcal{R}(\lambda_{0};A_{0})\Big]^{n+1}x=\int_{0}^{\infty}\exp(-n\lambda_{0}t)\frac{t^{n}}{n!}\mathcal{T}(t)x\mathrm{d}t, \quad x \in X.
	\end{equation}
	By the identity \eqref{2} and using integration by substitution, we get 
	\begin{equation}\label{3}
		\Big[\mathcal{R}(\lambda_{0};A_{0})\Big]^{n+1}\exp(-n\lambda_{0}\tau)x=\int_{0}^{\infty}\exp(-\lambda_{0} t)\frac{(t-n\tau)^{n}}{n!}\mathcal{T}(t-n\tau)\mathds{1}_{t\geq n\tau}x\mathrm{d}t, \quad x \in X.
	\end{equation}
	Therefore, from \eqref{1} and \eqref{3}, taking inverse Laplace transform, we derive a desired result:
	\begin{align*}
		\mathcal{S}(t;\tau)&=\sum\limits_{n=0}^{\infty}A_{1}^{n}\frac{(t-n\tau)^{n}}{n!}\mathcal{T}(t-n\tau)\mathds{1}_{t\geq n\tau}\nonumber\\
		&=\sum\limits_{n=0}^{\infty}\Big[A_{1}\mathcal{T}(\tau)^{-1}\Big]^{n}\frac{(t-n\tau)^{n}}{n!}\mathds{1}_{t\geq n\tau}\mathcal{T}(t)\nonumber\\
		&=\mathcal{T}(t)\sum\limits_{n=0}^{\infty}\Big[A_{1}\mathcal{T}(\tau)^{-1}\Big]^{n}\frac{(t-n\tau)^{n}}{n!}\mathds{1}_{t\geq n\tau}\nonumber\\
		&=\exp_{\tau}^{[A_{1}\mathcal{T}(\tau)^{-1}]t}\mathcal{T}(t)\\
		&=\mathcal{T}(t)\exp_{\tau}^{[A_{1}\mathcal{T}(\tau)^{-1}]t}, \quad t\geq 0,
	\end{align*}
	where by the properties of $C_{0}$-group $\left\lbrace \mathcal{T}(t)\right\rbrace _{t \in \mathbb{R}}$ we have used that
	\begin{equation*}
		\mathcal{T}(t-n\tau)=\mathcal{T}(t)\mathcal{T}(-n\tau)=\mathcal{T}(t)\Big[\mathcal{T}(-\tau)\Big]^{n}=\mathcal{T}(t)\Big[\mathcal{T}(\tau)^{-1}\Big]^{n}.
	\end{equation*}
	The proof is complete.
\end{proof}
Furthermore, the following corollary deals with the special case where the $C_{0}$-semigroup and $A_{1}\in \mathcal{L}\left(X\right)$ are permutable.
\begin{corollary}
	Let $A_{0}:\mathcal{D}\left( A_{0}\right) \subseteq X\to X$ be the infinitesimal generator of a strongly continuous semigroup $\left\lbrace \mathcal{T}(t)\right\rbrace_{t \geq 0}$ of bounded linear operators. If a strongly continuous group $\left\lbrace \mathcal{T}(t)\right\rbrace_{t\geq 0}$ commutes with $A_{1}\in \mathcal{L}(X)$, then, the strongly continuous one-parameter family of bounded linear operators $\mathcal{S}(t;\tau)$ which is satisfying $\mathcal{S}(t;\tau)=\Theta$ for $-\tau \leq t <0$ and $\mathcal{S}(0;\tau)=I$, has a closed-form on $[0,\infty)$ as follows:
	\begin{align}\label{partially}
		\mathcal{S}(t;\tau)&=\sum\limits_{n=0}^{\infty}A_{1}^{n}\frac{(t-n\tau)^{n}}{n!}\mathcal{T}(t-n\tau)\mathds{1}_{t\geq n\tau}\nonumber\\
		&=\sum\limits_{n=0}^{\infty}\mathcal{T}(t-n\tau)\frac{(t-n\tau)^{n}}{n!}\mathds{1}_{t\geq n\tau}A_{1}^{n}, \quad t\geq 0.
	\end{align}
\end{corollary}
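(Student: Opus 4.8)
The plan is to obtain the stated closed form as a direct specialization of the recursive representation in Theorem~\ref{main-hom}, exploiting that $A_1$ (and hence every power $A_1^n$) commutes with each $\mathcal{T}(t)$. Concretely, I would prove by induction on $n$ that, under the commutativity hypothesis, the $n$-th term of the delayed Dyson--Phillips series collapses to
\begin{equation*}
	S_n(t,n\tau)=A_1^n\frac{(t-n\tau)^n}{n!}\mathcal{T}(t-n\tau),\quad t\geq n\tau,\quad n\in\mathbb{N}_0.
\end{equation*}
Summing this identity against $\mathds{1}_{t\geq n\tau}$ via \eqref{power} then yields the first equality in \eqref{partially}, and a final application of commutativity moves the factor $A_1^n$ to the right to produce the second equality.

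For the induction, the base case $S_0(t,0)=\mathcal{T}(t)$ is immediate from \eqref{recursive}. For the inductive step I would insert the hypothesis for $S_{n-1}$ into the recursion
\begin{equation*}
	S_n(t,n\tau)=\int_{n\tau}^{t}\mathcal{T}(t-s)A_1 S_{n-1}(s-\tau,(n-1)\tau)\,\mathrm{d}s
\end{equation*}
and simplify the shifted argument $s-\tau-(n-1)\tau=s-n\tau$. Using that $A_1^n$ commutes with $\mathcal{T}(t-s)$ I would pull $A_1^n$ out of the integral, and using the semigroup law $\mathcal{T}(t-s)\mathcal{T}(s-n\tau)=\mathcal{T}(t-n\tau)$ --- valid precisely because both arguments $t-s$ and $s-n\tau$ are nonnegative on the integration range $n\tau\leq s\leq t$ --- I would factor out $\mathcal{T}(t-n\tau)$. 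What remains is the scalar integral
\begin{equation*}
	\int_{n\tau}^{t}\frac{(s-n\tau)^{n-1}}{(n-1)!}\,\mathrm{d}s=\frac{(t-n\tau)^n}{n!},
\end{equation*}
obtained by the substitution $u=s-n\tau$, which closes the induction.

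Convergence and strong continuity of the resulting series, together with the fact that it is genuinely the fundamental solution satisfying the initial conditions, are already guaranteed by Theorem~\ref{main-hom}, so no new analytic estimate is needed; the corollary merely records the permutable simplification of the general formula. As an alternative route, one could instead start from the Laplace-image computation \eqref{1}--\eqref{3} in the proof of Theorem~\ref{permutable-case}: those identities use only the semigroup property, the resolvent-power formula, and the commutativity of $A_1$ with $\mathcal{R}(\lambda_0;A_0)$ (which follows from $A_1\mathcal{T}(t)=\mathcal{T}(t)A_1$ by pulling the bounded operator $A_1$ through the integral in \eqref{resolvent}). Inverting the transform then reproduces exactly the first line of \eqref{partially}, without invoking any group inverse.

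I do not expect a genuine obstacle here; the only points demanding care are bookkeeping ones. The first is to confirm that the semigroup composition identity is applied only where both time arguments are nonnegative, so that the mere semigroup --- rather than group --- structure suffices. The second is to record why the elegant $\exp_\tau^{[A_1\mathcal{T}(\tau)^{-1}]t}$ form of Theorem~\ref{permutable-case} is unavailable in this setting: that rewriting relied on $\mathcal{T}(t-n\tau)=\mathcal{T}(t)[\mathcal{T}(\tau)^{-1}]^n$, which needs the invertibility supplied by a $C_0$-group. For a semigroup the representation must remain in the shifted form $\mathcal{T}(t-n\tau)$, exactly as displayed in \eqref{partially}.
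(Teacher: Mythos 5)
Your proposal is correct, and your primary route is genuinely different from the paper's. The paper gives no standalone proof of this corollary: it is presented as a byproduct of the proof of Theorem \ref{permutable-case}, whose Laplace-transform computation (Lemma \ref{lema}, the resolvent-power identity \eqref{2}, the shifted version \eqref{3}, then inversion via the uniqueness theorem for the Laplace transform) yields exactly the first line of \eqref{partially} as an intermediate step, before the group structure is ever invoked; your ``alternative route'' reproduces precisely that argument, including the correct justification that $A_{1}$ commutes with $\mathcal{R}(\lambda_{0};A_{0})$ by pulling the bounded operator through the integral in \eqref{resolvent}. Your main argument instead proceeds by induction on the delayed Dyson--Phillips recursion \eqref{recursive}, collapsing each term to $S_{n}(t,n\tau)=A_{1}^{n}\frac{(t-n\tau)^{n}}{n!}\mathcal{T}(t-n\tau)$ using only the commutation hypothesis (to extract $A_{1}^{n}$ from the integral) and the semigroup law $\mathcal{T}(t-s)\mathcal{T}(s-n\tau)=\mathcal{T}(t-n\tau)$, which is legitimate precisely because $t-s\geq 0$ and $s-n\tau\geq 0$ throughout the integration range; the remaining scalar integral evaluates to $(t-n\tau)^{n}/n!$, and summing against \eqref{power} finishes, with the second equality in \eqref{partially} following by one more application of commutativity. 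This route is more elementary --- no resolvent identities, no inverse Laplace transform, no uniqueness theorem --- and it makes structurally transparent why the result survives for mere semigroups while the $\exp_{\tau}^{[A_{1}\mathcal{T}(\tau)^{-1}]t}$ form of Theorem \ref{permutable-case} does not: only nonnegative time arguments ever occur, whereas the factorization $\mathcal{T}(t-n\tau)=\mathcal{T}(t)\big[\mathcal{T}(\tau)^{-1}\big]^{n}$ requires invertibility. That is a point the paper leaves implicit (and slightly garbles, since the corollary's hypothesis reads ``strongly continuous group'' while indexing by $t\geq 0$). The Laplace route, for its part, buys economy: it reuses machinery already established, so the corollary costs nothing beyond the theorem's proof.
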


If we consider this particular case on $[-\tau,T]$ where $T=(n+1)\tau$ for a fixed $n\in \mathbb{N}_{0}$, then we can use the following \textit{piece-wise} construction for a \textit{fundamental solution} of \eqref{main equation} with the initial conditions \eqref{unit}.
\begin{corollary}\label{cor}
	Let $A_{0}:\mathcal{D}\left( A_{0}\right) \subseteq X\to X$ be the infinitesimal generator of a strongly continuous group $\left\lbrace \mathcal{T}(t)\right\rbrace_{t \in \mathbb{R}}$ of bounded linear operators. If a strongly continuous group $\left\lbrace \mathcal{T}(t)\right\rbrace_{t \in \mathbb{R}}$ commutes with $A_{1}\in \mathcal{L}(X)$, then, the fundamental solution of \eqref{main equation} has a representation on $[0,T]$ as follows:
	\begin{align*}
		\mathcal{S}(t;\tau)&=\exp_{\tau}^{[A_{1}\mathcal{T}(\tau)^{-1}]t}\mathcal{T}(t)=\sum\limits_{k=0}^{n}\Big[A_{1}\mathcal{T}(\tau)^{-1}\Big]^{k}\frac{(t-k\tau)^{k}}{k!}\mathcal{T}(t)\\
		&=\mathcal{T}(t)\exp_{\tau}^{[A_{1}\mathcal{T}(\tau)^{-1}]t}=\mathcal{T}(t)\sum\limits_{k=0}^{n}\Big[A_{1}\mathcal{T}(\tau)^{-1}\Big]^{k}\frac{(t-k\tau)^{k}}{k!}, \quad n\tau< t\leq  (n+1)\tau.
	\end{align*}
\end{corollary}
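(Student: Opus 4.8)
The plan is to reduce this corollary to Theorem \ref{permutable-case} by exploiting the truncating effect of the indicator functions on a bounded time interval. First I would invoke Theorem \ref{permutable-case}, which under the identical commutativity hypothesis already establishes, for all $t\geq 0$, the infinite-series representation
\begin{equation*}
	\mathcal{S}(t;\tau)=\sum_{k=0}^{\infty}\Big[A_{1}\mathcal{T}(\tau)^{-1}\Big]^{k}\frac{(t-k\tau)^{k}}{k!}\mathds{1}_{t\geq k\tau}\mathcal{T}(t),
\end{equation*}
together with the equivalent form in which $\mathcal{T}(t)$ appears on the left of the powers of $A_{1}\mathcal{T}(\tau)^{-1}$. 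Both forms are at my disposal, so the entire task is to show that on each subinterval the series collapses to a finite sum.

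Next I would fix $n\in\mathbb{N}_{0}$ and restrict attention to $n\tau< t\leq (n+1)\tau$, examining which summands survive. The factor $\mathds{1}_{t\geq k\tau}$ vanishes precisely when $k\tau> t$. For every index $k\leq n$ one has $k\tau\leq n\tau< t$, hence $\mathds{1}_{t\geq k\tau}=1$ and the term is retained. For every index $k\geq n+2$ one has $k\tau\geq (n+2)\tau> (n+1)\tau\geq t$, hence $\mathds{1}_{t\geq k\tau}=0$ and the term is discarded. This already truncates the series to the range $\sum_{k=0}^{n+1}$.

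The only delicate index is $k=n+1$, and this is where I would place the single point of care. On the open part $n\tau< t< (n+1)\tau$ we have $(n+1)\tau> t$, so $\mathds{1}_{t\geq (n+1)\tau}=0$ and the term is absent; at the endpoint $t=(n+1)\tau$ the indicator equals $1$, but there the polynomial weight $(t-(n+1)\tau)^{n+1}$ vanishes, so the $k=n+1$ summand contributes zero in either case. Consequently, on the whole half-open interval $(n\tau,(n+1)\tau]$ the series reduces to $\sum_{k=0}^{n}$, which is exactly the asserted closed form; the two orderings of $\mathcal{T}(t)$ relative to the powers of $A_{1}\mathcal{T}(\tau)^{-1}$ are inherited directly from the commutativity already exploited in Theorem \ref{permutable-case}. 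I do not expect a genuine obstacle here—the argument is essentially a bookkeeping truncation of an absolutely convergent series—so the boundary term at $t=(n+1)\tau$ is the only step I would flag explicitly, precisely to keep the half-open interval consistent with the statement.
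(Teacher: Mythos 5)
Your proposal is correct and follows essentially the same route the paper intends: the corollary is stated as an immediate consequence of Theorem \ref{permutable-case}, obtained by restricting the infinite delayed-exponential series to $n\tau< t\leq (n+1)\tau$, where the indicators kill all terms with index $k\geq n+1$ (the boundary term $k=n+1$ at $t=(n+1)\tau$ vanishing because of the factor $(t-(n+1)\tau)^{n+1}$, exactly as you note). Your explicit treatment of that endpoint is the only detail the paper leaves unstated, and it is handled correctly.
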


Next, we derive an explicit representation formula is known as the \textit{Cauchy formula} for the classical (strong) solution of the abstract initial value problem to linear homogeneous functional evolution equation \eqref{main equation} via the method of \textit{variation of constants formula}.

\begin{theorem}\label{representation}
	Let $A_{0}:\mathcal{D}\left( A_{0}\right) \subseteq X\to X$ be infinitesimal generator of a strongly continuous semigroup $\left\lbrace \mathcal{T}(t)\right\rbrace _{t \geq 0}$ of bounded linear operators, $A_{1} \in \mathcal{L}(X)$ and the initial function $\varphi(\cdot)\in \mathbb{C}^{1}\left( [-\tau,0],X\right)$. Then, the classical (strong) solution $u_{0}(\cdot)\in \mathbb{C}^{1}\left( [-\tau,\infty),X\right)$ of the abstract Cauchy problem \eqref{main equation} for a linear homogeneous functional evolution equation is satisfying $u_{0}(t)\in \mathcal{D}\left( A_{0}\right)$ for all $t\geq 0$ with $\varphi(t)\in \mathcal{D}\left( A_{0}\right)$ for all $t\in [-\tau,0]$ and it can be represented in the integral form
	\begin{equation}\label{sol-1}
		u_{0}(t)=\mathcal{S}(t+\tau;\tau)\varphi(-\tau)+
		\int_{-\tau}^{0}\mathcal{S}(t-s;\tau)\Big[ \varphi^{\prime}(s)-A_{0}\varphi(s)\Big]\mathrm{d}s, \quad t \geq -\tau.
	\end{equation}
\end{theorem}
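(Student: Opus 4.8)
The plan is to verify directly that the operator-valued right-hand side of \eqref{sol-1} reproduces the prescribed history on $[-\tau,0]$ and solves the delay equation \eqref{main equation} on $[0,\infty)$, using only the properties of the fundamental solution $\mathcal{S}(\cdot;\tau)$ furnished by Theorem \ref{main-hom}. Uniqueness is then inherited from the uniqueness argument already carried out in the proof of Theorem \ref{main-hom}: the difference of two classical solutions solves the homogeneous problem with vanishing history and must therefore be identically zero. Throughout I abbreviate $\psi(s)\coloneqq \varphi^{\prime}(s)-A_{0}\varphi(s)$, which is continuous with values in $X$ since $\varphi\in\mathbb{C}^{1}([-\tau,0],X)$ and $\varphi(s)\in\mathcal{D}(A_{0})$.

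First I would check the history condition. For $t\in[-\tau,0]$ and $s\in(t,0]$ one has $t-s<0$, so $\mathcal{S}(t-s;\tau)=\Theta$ by \eqref{unit}; for $s\in[-\tau,t]$ one has $0\leq t-s\leq t+\tau<\tau$, and on $[0,\tau)$ only the $n=0$ term survives in \eqref{power}, whence $\mathcal{S}(t-s;\tau)=\mathcal{T}(t-s)$ and $\mathcal{S}(t+\tau;\tau)=\mathcal{T}(t+\tau)$. Thus \eqref{sol-1} collapses to
\begin{equation*}
	u_{0}(t)=\mathcal{T}(t+\tau)\varphi(-\tau)+\int_{-\tau}^{t}\mathcal{T}(t-s)\psi(s)\mathrm{d}s, \quad -\tau\leq t\leq 0.
\end{equation*}
Since $\frac{\mathrm{d}}{\mathrm{d}s}\big[\mathcal{T}(t-s)\varphi(s)\big]=-A_{0}\mathcal{T}(t-s)\varphi(s)+\mathcal{T}(t-s)\varphi^{\prime}(s)=\mathcal{T}(t-s)\psi(s)$ for $\varphi(s)\in\mathcal{D}(A_{0})$, the fundamental theorem of calculus gives $\int_{-\tau}^{t}\mathcal{T}(t-s)\psi(s)\mathrm{d}s=\varphi(t)-\mathcal{T}(t+\tau)\varphi(-\tau)$, so that $u_{0}(t)=\varphi(t)$ on $[-\tau,0]$, as required.

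Next I would verify the equation for $t\geq 0$. The integration limits in \eqref{sol-1} are fixed, so differentiating under the integral sign and substituting the defining relation $\frac{\mathrm{d}}{\mathrm{d}t}\mathcal{S}(r;\tau)=A_{0}\mathcal{S}(r;\tau)+A_{1}\mathcal{S}(r-\tau;\tau)$ (applied at $r=t+\tau$ and at $r=t-s\geq 0$) yields
\begin{align*}
	\frac{\mathrm{d}}{\mathrm{d}t}u_{0}(t) &= \big[A_{0}\mathcal{S}(t+\tau;\tau)+A_{1}\mathcal{S}(t;\tau)\big]\varphi(-\tau)\\
	&\quad +\int_{-\tau}^{0}\big[A_{0}\mathcal{S}(t-s;\tau)+A_{1}\mathcal{S}(t-s-\tau;\tau)\big]\psi(s)\mathrm{d}s.
\end{align*}
Collecting the $A_{0}$-terms reconstitutes $A_{0}u_{0}(t)$ (after pulling the closed operator $A_{0}$ through the integral), while collecting the $A_{1}$-terms produces $A_{1}\big[\mathcal{S}(t;\tau)\varphi(-\tau)+\int_{-\tau}^{0}\mathcal{S}((t-\tau)-s;\tau)\psi(s)\mathrm{d}s\big]$; evaluating \eqref{sol-1} at $t-\tau$ shows the bracket is exactly $u_{0}(t-\tau)$. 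Hence $\frac{\mathrm{d}}{\mathrm{d}t}u_{0}(t)=A_{0}u_{0}(t)+A_{1}u_{0}(t-\tau)$ for all $t\geq 0$.

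The genuine work lies in the analytic justifications rather than in this algebra. Interchanging the closed operator $A_{0}$ with the Bochner integral is legitimate because $\mathcal{S}(t-s;\tau)\psi(s)\in\mathcal{D}(A_{0})$ and $s\mapsto A_{0}\mathcal{S}(t-s;\tau)\psi(s)$ is continuous, so the standard closedness lemma ($\int f\in\mathcal{D}(A_{0})$ and $A_{0}\int f=\int A_{0}f$) applies and simultaneously certifies $u_{0}(t)\in\mathcal{D}(A_{0})$. Differentiation under the integral sign requires care at the finitely many $s$ with $t-s$ a positive integer multiple of $\tau$, where $r\mapsto\mathcal{S}(r;\tau)$ may fail to be differentiable (the jump of $\mathcal{S}(\cdot;\tau)$ at the origin propagates a discontinuity into $\mathcal{S}^{\prime}$); these $s$ form a null set for each fixed $t$, and the one-sided $t$-derivatives of $\mathcal{S}(t-s;\tau)$ are bounded and piecewise continuous in $s$, so dominated convergence justifies the interchange, the identity holding with the right-hand derivative at the exceptional instants $t\in\{\tau,2\tau,\dots\}$. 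Finally, the $\mathbb{C}^{1}$-regularity of $u_{0}$ across $t=0$ rests on $\varphi\in\mathbb{C}^{1}$ together with $\varphi(0),\varphi(-\tau)\in\mathcal{D}(A_{0})$.
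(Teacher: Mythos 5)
Your proposal is correct in outline but proceeds in the opposite direction from the paper's own proof. The paper \emph{derives} formula \eqref{sol-1}: it posits the variation-of-constants ansatz $u_{0}(t)=\mathcal{S}(t+\tau;\tau)c+\int_{-\tau}^{0}\mathcal{S}(t-s;\tau)g(s)\,\mathrm{d}s$ with unknown $c$ and $g$, collapses $\mathcal{S}$ to $\mathcal{T}$ on the initial interval exactly as you do, and then differentiates the resulting identity $\varphi(t)=\mathcal{T}(t+\tau)\varphi(-\tau)+\int_{-\tau}^{t}\mathcal{T}(t-s)g(s)\,\mathrm{d}s$ in $t$ via the Leibniz rule to read off $c=\varphi(-\tau)$ and $g=\varphi^{\prime}-A_{0}\varphi$. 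You instead \emph{verify} the formula: you integrate $\frac{\mathrm{d}}{\mathrm{d}s}\big[\mathcal{T}(t-s)\varphi(s)\big]=\mathcal{T}(t-s)\psi(s)$ in $s$ to recover the history condition (the exact inverse of the paper's differentiation step), then check the delay equation for $t\geq 0$ by differentiating \eqref{sol-1} and invoking the defining equation of the fundamental solution, and finally appeal to the uniqueness argument of Theorem \ref{main-hom}. What each buys: the paper's route explains where the formula comes from, but read strictly it only proves necessity under the ansatz --- it never confirms that \eqref{sol-1} actually solves \eqref{main equation} for $t\geq 0$; your route supplies exactly that missing sufficiency, so it is closer to a complete existence proof of the stated theorem.

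One step of yours deserves a caveat. To differentiate under the integral you apply the fundamental-solution equation to the vectors $\psi(s)=\varphi^{\prime}(s)-A_{0}\varphi(s)$, asserting $\mathcal{S}(t-s;\tau)\psi(s)\in\mathcal{D}(A_{0})$. But $\psi(s)$ is only known to lie in $X$, and for a general $C_{0}$-semigroup one does \emph{not} have $\mathcal{T}(t)x\in\mathcal{D}(A_{0})$ for $x\notin\mathcal{D}(A_{0})$; since the leading term of \eqref{power} is $\mathcal{T}(t)$, the differential equation for $\mathcal{S}(\cdot;\tau)$ in Theorem \ref{main-hom} is really only available strongly on $\mathcal{D}(A_{0})$. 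So this interchange needs either the extra hypothesis $\psi(s)\in\mathcal{D}(A_{0})$, or an integration by parts in $s$ that moves the derivative structure onto $\varphi$ (in effect what the paper's subsequent theorem, formula \eqref{sol-2}, does). This is a soft spot rather than a fatal error: the paper's own proof is no more rigorous at the corresponding points (it assumes $g$ continuously differentiable though the hypotheses only make $g$ continuous, and it handles domain questions formally via Lemma \ref{lemma}). Likewise, your closing claim that $u_{0}\in\mathbb{C}^{1}$ across $t=0$ follows from $\varphi\in\mathbb{C}^{1}$ and domain membership is not quite right --- one needs the compatibility condition $\varphi^{\prime}(0)=A_{0}\varphi(0)+A_{1}\varphi(-\tau)$, otherwise $u_{0}^{\prime}$ jumps at $0$ --- but that defect sits in the theorem statement itself, not in your argument specifically.
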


\begin{proof}
	By using the \textit{variation of constants formula}, any solution $u_{0}(\cdot):[-\tau,\infty) \to X$ of linear homogeneous delay evolution equation \eqref{main equation} should be satisfied in the form:
	\begin{equation*}
		u_{0}(t)=\mathcal{S}(t+\tau;\tau)c+
		\int_{-\tau}^{0}\mathcal{S}(t-s;\tau)g(s) \mathrm{d}s, \quad t \geq -\tau,
	\end{equation*}
	where $c$ is an unknown vector, $g(\cdot):[-\tau,0]\to X$ is an unknown continuously differentiable operator-valued function and that it satisfies the initial conditions $u_{0}(t)=\varphi(t)$ for $-\tau \leq t\leq 0$:
	\begin{equation}\label{int}
		\varphi(t)=\mathcal{S}(t+\tau;\tau)c+
		\int_{-\tau}^{0}\mathcal{S}(t-s;\tau)g(s) \mathrm{d}s, \quad  t \in [-\tau,0].
	\end{equation}
	If $t=-\tau$, then in accordance with the following result
	\begin{equation*}
		\mathcal{S}(-\tau-s;\tau)=\begin{cases}
			\Theta, \quad -\tau <s \leq 0,\\
			I, \qquad s=-\tau,
		\end{cases}
	\end{equation*}
	we acquire $c=\varphi(-\tau)\in \mathcal{D}(A_{0})$.
	
	On the interval $-\tau \leq t \leq 0$, we split \eqref{int} into two integrals:
	\begin{align*}
		\varphi(t)=\mathcal{S}(t+\tau;\tau)\varphi(-\tau)&+
		\int_{-\tau}^{t}\mathcal{S}(t-s;\tau)g(s)\mathrm{d}s\\ &+\int_{t}^{0}\mathcal{S}(t-s;\tau)g(s) \mathrm{d}s, \quad t \in [-\tau,0].
	\end{align*}
	Furthermore, by making use of the following relations:
	\begin{equation*}
		\mathcal{S}(t-s;\tau)=\mathcal{T}(t-s), \quad -\tau \leq s \leq t \quad \text{and} \quad 
		\mathcal{S}(t-s;\tau)=\begin{cases}
			\Theta, \quad t <s \leq 0,\\
			I, \qquad s=t,
		\end{cases}
	\end{equation*}
	one can easily derive that
	\begin{equation}\label{difer}
		\varphi(t)=\mathcal{T}(t+\tau)\varphi(-\tau)+
		\int_{-\tau}^{t}\mathcal{T}(t-s)g(s)\mathrm{d}s, \quad  t\in [-\tau,0]. 
	\end{equation}
	After differentiating the equation \eqref{difer} by \textit{Leibniz integral rule}, we derive
	\begin{align*}
		\varphi^{\prime}(t)&=A_{0}\Big[\mathcal{T}(t+\tau)\varphi(-\tau)+
		\int_{-\tau}^{t}\mathcal{T}(t-s)g(s)\mathrm{d}s\Big]+g(t)\\
		&=A_{0}\varphi(t)+g(t), \quad t\in [-\tau,0].
	\end{align*}
	Hence, it follows that $g(t)=\varphi^{\prime}(t)-A_{0}\varphi(t)$, $t\in [-\tau,0]$.
	The proof is complete.
\end{proof}

The integrand occurring in the expression \eqref{sol-1} is the derivative of the initial function $\varphi(t)$, $-\tau \leq t\leq 0$, i.e., the initial conditions are continuously differentiable on the initial interval $[-\tau,0]$. This condition can be "weakened" by requiring  $\varphi(t)$ only to be continuous for $t\in [-\tau,0]$.

\begin{theorem}
	Let $A_{0}:\mathcal{D}\left( A_{0}\right) \subseteq X\to X$ be infinitesimal generator of a strongly continuous semigroup of bounded linear operators $\left\lbrace \mathcal{T}(t)\right\rbrace _{t \geq 0}$, $A_{1} \in \mathcal{L}(X)$ and the initial function $\varphi(\cdot)\in \mathbb{C}\left( [-\tau,0],X\right)$. Then, the classical (strong) solution $u_{0}(\cdot)\in \mathbb{C}\left( [-\tau,\infty),X\right) \cap \mathbb{C}^{1}\left( [0,\infty),X\right)$ of the abstract Cauchy problem \eqref{main equation} for a linear homogeneous functional evolution equation is satisfying $u_{0}(t)\in \mathcal{D}\left(A_{0}\right)$ for all $t\geq 0$ with $\varphi(0)\in \mathcal{D}\left(A_{0}\right)$ and it can be represented in the integral form:
	\begin{equation}\label{sol-2}
		u_{0}(t)=\mathcal{S}(t;\tau)\varphi(0)+
		\int_{-\tau}^{0}\mathcal{S}(t-\tau-s;\tau)A_{1}\varphi(s)\mathrm{d}s, \quad t \geq 0.
	\end{equation}
\end{theorem}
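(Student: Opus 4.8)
The plan is to start from the representation \eqref{sol-1} already established for continuously differentiable initial data in Theorem \ref{representation} and to convert it, by a single integration by parts, into the form \eqref{sol-2}, which involves $\varphi$ only through $A_{1}\varphi$ and hence makes sense for merely continuous $\varphi$. The decisive tool is the \emph{right} fundamental equation \eqref{fundamental-1} from the Remark, which I would use to differentiate $s\mapsto\mathcal{S}(t-s;\tau)$. Since $t-s>0$ for $s\in[-\tau,0]$ whenever $t>0$, this map is strongly continuously differentiable on $[-\tau,0]$ with
\begin{equation*}
	\frac{\mathrm{d}}{\mathrm{d}s}\mathcal{S}(t-s;\tau)=-\mathcal{S}(t-s;\tau)A_{0}-\mathcal{S}(t-s-\tau;\tau)A_{1},
\end{equation*}
the summand $\mathcal{S}(t-s-\tau;\tau)$ causing no difficulty because a negative argument simply returns $\Theta$.

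Integrating $\int_{-\tau}^{0}\mathcal{S}(t-s;\tau)\varphi^{\prime}(s)\,\mathrm{d}s$ by parts then produces the boundary contribution $\mathcal{S}(t;\tau)\varphi(0)-\mathcal{S}(t+\tau;\tau)\varphi(-\tau)$ together with $\int_{-\tau}^{0}\mathcal{S}(t-s;\tau)A_{0}\varphi(s)\,\mathrm{d}s$ and $\int_{-\tau}^{0}\mathcal{S}(t-s-\tau;\tau)A_{1}\varphi(s)\,\mathrm{d}s$. Substituting this back into \eqref{sol-1} I expect the leading term $\mathcal{S}(t+\tau;\tau)\varphi(-\tau)$ and the entire $A_{0}\varphi$-integral to cancel exactly, leaving precisely \eqref{sol-2}. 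This settles the claim for $\varphi\in\mathbb{C}^{1}\left([-\tau,0],X\right)$ and explains the origin of the new formula.

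To discard the differentiability hypothesis I would observe that the right-hand side of \eqref{sol-2} is a bounded linear functional of $\varphi\in\mathbb{C}\left([-\tau,0],X\right)$, since $\mathcal{S}(\cdot;\tau)$ obeys the bound $\norm{\mathcal{S}(t;\tau)}\le M\exp(\omega_{1}t)$ proved in Theorem \ref{main-hom} and $A_{1}$ is bounded; approximating a continuous $\varphi$ with $\varphi(0)\in\mathcal{D}(A_{0})$ by continuously differentiable functions in the supremum norm passes the identity to the limit. Alternatively, and more self-contained, I would verify \eqref{sol-2} directly: at $t=0$ one has $\mathcal{S}(0;\tau)=I$ while $\mathcal{S}(-\tau-s;\tau)=\Theta$ for $s\in(-\tau,0)$, so $u_{0}(0)=\varphi(0)$; and differentiating \eqref{sol-2} by the \emph{left} fundamental equation \eqref{fundamental} yields $A_{0}u_{0}(t)$ from the $A_{0}$-terms, while the delay term $A_{1}u_{0}(t-\tau)$ appears from the jump of $\mathcal{S}$ at the origin for $t\in(0,\tau]$ as the Leibniz boundary contribution $\mathcal{S}(0;\tau)A_{1}\varphi(t-\tau)$, and for $t>\tau$ from the second summand $\int_{-\tau}^{0}\mathcal{S}(t-2\tau-s;\tau)A_{1}\varphi(s)\,\mathrm{d}s$, which is exactly $A_{1}u_{0}(t-\tau)$ by \eqref{sol-2} applied at $t-\tau$; in either regime one recovers \eqref{main equation}.

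The step I expect to be the main obstacle is the careful bookkeeping of the jump discontinuity of $\mathcal{S}(\cdot;\tau)$ at $0$ in the direct verification: for $t\in[0,\tau]$ the argument $t-\tau-s$ crosses $0$ as $s$ runs through $[-\tau,0]$, so the integral effectively truncates at $s=t-\tau$ and the Leibniz rule generates the boundary term responsible for $A_{1}u_{0}(t-\tau)$, whereas for $t>\tau$ no crossing occurs and the delay term arises instead from the differentiated fundamental equation. Confirming that these two distinct mechanisms reassemble into the single clean expression $A_{1}u_{0}(t-\tau)$ across all of $[0,\infty)$, and that $u_{0}(t)\in\mathcal{D}(A_{0})$ throughout, is where the argument demands the most care; the integration-by-parts route avoids this delicacy by borrowing the smoothness already proved for \eqref{sol-1}, at the price of the density approximation.
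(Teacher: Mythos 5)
Your main computation coincides exactly with the paper's proof: the paper likewise starts from \eqref{sol-1}, integrates $\int_{-\tau}^{0}\mathcal{S}(t-s;\tau)\varphi^{\prime}(s)\,\mathrm{d}s$ by parts, and invokes the right-sided fundamental equation \eqref{fundamental-1} so that the $A_{0}$-integrals cancel and the boundary term $\mathcal{S}(t+\tau;\tau)\varphi(-\tau)$ annihilates the leading term of \eqref{sol-1}, leaving \eqref{sol-2}. Where you go beyond the paper is in the second half: the paper's proof stops after the integration by parts, even though that argument only covers $\varphi\in\mathbb{C}^{1}\left([-\tau,0],X\right)$ while the theorem is stated for merely continuous $\varphi$; your density argument (using the bound $\norm{\mathcal{S}(t;\tau)}\leq M\exp(\omega_{1}t)$ and the boundedness of $A_{1}$), or alternatively your direct verification with the careful bookkeeping of the jump of $\mathcal{S}(\cdot;\tau)$ at $0$, is precisely the step needed to close that gap, and it is absent from the paper. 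One caveat applies equally to your sketch and to the paper: uniform approximation of $\varphi$ by $\mathbb{C}^{1}$ data identifies the right-hand side of \eqref{sol-2} as the limit of classical solutions, hence as the \emph{mild} solution, but upgrading this limit to a \emph{classical} solution lying in $\mathcal{D}(A_{0})$ for merely continuous $\varphi$ requires an additional regularity argument (a continuous forcing term $A_{1}u_{0}(t-\tau)$ alone does not guarantee classical solvability of an inhomogeneous abstract Cauchy problem), which neither route fully settles.
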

\begin{proof}
	Indeed, by performing integration by parts, we derive that
	\begin{align}\label{s}
		\int_{-\tau}^{0}\mathcal{S}(t-s;\tau)\Big[\varphi^{\prime}(s)-A_{0}\varphi(s)\Big]\mathrm{d}s&=\mathcal{S}(t;\tau)\varphi(0)-\mathcal{S}(t+\tau;\tau)\varphi(-\tau)\nonumber\\
		&-\int_{-\tau}^{0}\Big[\frac{\partial}{\partial s} \mathcal{S}(t-s;\tau)-\mathcal{S}(t-s;\tau)A_{0}\Big]\varphi(s)\mathrm{d}s\nonumber\\
		&=\mathcal{S}(t;\tau)\varphi(0)-\mathcal{S}(t+\tau;\tau)\varphi(-\tau)\nonumber\\
		&+\int_{-\tau}^{0} \mathcal{S}(t-\tau-s;\tau)A_{1}\varphi(s)\mathrm{d}s.
	\end{align}
	By putting \eqref{s} in \eqref{sol-1}, one can reduce the expression \eqref{sol-1} to the form \eqref{sol-2}.
\end{proof}
\begin{remark}
	It is interesting to note that these results are the natural extension of the results are attained in \cite{mahmudov-almatarneh} for first-order impulsive time-delay linear systems.
\end{remark}

Secondly, we consider the following abstract Cauchy problem for a linear non-homogeneous functional evolution equation in a Banach space $X$:
\begin{equation}\label{non-hom}
	\begin{cases}
		\frac{\mathrm{d}}{\mathrm{d}t}u(t)=A_{0}u(t)+A_{1}u(t-\tau)+g(t), \quad t\geq 0,\\
		u(t)=\varphi(t), \quad -\tau\leq t \leq 0,
	\end{cases}
\end{equation}
where $A_{0}:\mathcal{D}\left(A_{0} \right)\subseteq X\to X$ is the infinitesimal generator of a strongly continuous semigroup of bounded linear operators $\left\lbrace \mathcal{T}(t)\right\rbrace _{t \geq 0}$, $A_{1}\in \mathcal{L}\left(X\right)$, $\tau$ is a positive time-delay, $\varphi(\cdot):[-\tau,0]\to X$ is a continuous operator-valued function on $[-\tau,0]$ and $g(\cdot):[0,\infty)\to X$ is a  continuously differentiable operator-valued function on $[0,\infty)$.

We will apply a \textit{superposition principle} for a construction of solution to
the linear non-homogeneous abstract differential equation \eqref{non-hom} with a constant delay.
This principle is clear from linearity of the problem, but it is a very useful principle in solving linear non-homogeneous  differential equations.

\textbf{Superposition principle:} \textit{It is known that if $u_{0}(t)$ is a solution of linear homogeneous abstract Cauchy problem \eqref{main equation} with non-homogeneous initial conditions
	$u_{0}(t)=\varphi(t)$, $-\tau \leq t\leq 0$ and $u_{1}(t)$ is a solution of linear non-homogeneous abstract Cauchy problem \eqref{non-hom} with homogeneous initial conditions $u_{1}(t)=0$, $-\tau \leq t\leq 0$,  then $u(t)\coloneqq u_{0}(t)+u_{1}(t)$ is a solution of linear non-homogeneous abstract Cauchy problem \eqref{non-hom}  with non-homogeneous initial conditions $u(t)=\varphi(t)$, $-\tau \leq t \leq 0$.}

Before finding a closed-form of solution to \eqref{non-hom}, we prove the following auxiliary lemma plays a crucial role in the proof of Theorem \ref{thm-2}.
\begin{lemma}\label{lemma}
	Let $\left\lbrace \mathcal{T}(t)\right\rbrace_{t \geq 0}$ be a strongly continuous semigroup of bounded linear operators. For continuous $f(t)$ on $[0,\infty)$ to $X$, $g(t)=\int_{0}^{t}\mathcal{T}(t-s)f(s)\mathrm{d}s=\int_{0}^{t}\mathcal{T}(s)f(t-s)\mathrm{d}s$ exists and is itself continuous on $[0,\infty)$ to $X$.
	If $f(t)$ is continuously differentiable on $[0,\infty)$, then $g(t)$ is also continuously differentiable on $[0,\infty)$ and the following relations hold true on $[0,\infty)$:
	\begin{align}\label{Leibniz}
		\frac{\mathrm{d}g(t)}{\mathrm{d}t}
		&=\mathcal{T}(t)f(0)+\int_{0}^{t}\mathcal{T}(t-s)f^{\prime}(s)\mathrm{d}s,\\
		&=f(t)+A_{0}\int_{0}^{t}\mathcal{T}(t-s)f(s)\mathrm{d}s. \label{leibniz}
	\end{align}
	\begin{proof}
		Since $\mathcal{T}(t)x$ is continuous with respect to $t$ on $[0,\infty)$ for a fixed $x \in X$, it is obvious that $\mathcal{T}(t-s)f(s)$ is also continuous in $s \in [0,t]$ whenever the same is true of $f(s)$. In this case, $g(t)=\int_{0}^{t}\mathcal{T}(t-s)f(s)\mathrm{d}s$ will exists in the strong topology and be equal to $\int_{0}^{t}\mathcal{T}(s)f(t-s)\mathrm{d}s$. 
		
		Since $f(s)$ is continuously differentiable for any $s \in [0,t]$, by Leibniz integral rule, we obtain that
		\begin{align}\label{A}
			\frac{\mathrm{d}g(t)}{\mathrm{d}t}=\frac{\mathrm{d}}{\mathrm{d}t}\int_{0}^{t}\mathcal{T}(s)f(t-s)\mathrm{d}s&=\mathcal{T}(t)f(0)+\int_{0}^{t}\mathcal{T}(s)\frac{\partial}{\partial
				t}f(t-s)\mathrm{d}s\nonumber\\
			&=\mathcal{T}(t)f(0)+\int_{0}^{t}\mathcal{T}(t-s)f^{\prime}(s)\mathrm{d}s, \quad t\geq 0.
		\end{align}
		Next, we prove the equivalence of the equations \eqref{Leibniz} and \eqref{leibniz}. Since  $\int_{0}^{t}\mathcal{T}(t-s)f(s)\mathrm{d}s\in \mathcal{D}\left( A_{0}\right)$, by the integration by parts formula, it follows that 
		\begin{equation}\label{B}
			\int_{0}^{t}\mathcal{T}(t-s)f^{\prime}(s)\mathrm{d}s
			=f(t)-\mathcal{T}(t)f(0)+A_{0}\int_{0}^{t}\mathcal{T}(t-s)f(s)\mathrm{d}s, \quad t \geq 0.
		\end{equation}
		As a consequence, from \eqref{A} and \eqref{B}, we attain the desired result:
		\begin{equation*}
			\frac{\mathrm{d}g(t)}{\mathrm{d}t}=f(t)+A_{0}\int_{0}^{t}\mathcal{T}(t-s)f(s)\mathrm{d}s, \quad t\geq 0.
		\end{equation*}
		Further, the continuity of $\mathrm{d}g(t)/\mathrm{d}t$ follows from the first part of the lemma using the first representation of $\mathrm{d}g(t)/\mathrm{d}t$ given by \eqref{A}. The proof is complete.
	\end{proof}
\end{lemma}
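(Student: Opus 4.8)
The plan is to dispatch the two assertions in order—first the existence and continuity of the convolution $g$, then its differentiability together with the two formulas for $g'$—relying only on the strong continuity of $\left\lbrace \mathcal{T}(t)\right\rbrace_{t\geq 0}$, the exponential bound \eqref{exponentiall}, and the basic generator identities recorded earlier in the preliminaries.

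For existence and continuity, first I would observe that since $t\mapsto \mathcal{T}(t)x$ is continuous for each $x\in X$ and $f$ is continuous, the map $s\mapsto \mathcal{T}(t-s)f(s)$ is a continuous $X$-valued function on the compact interval $[0,t]$, so its integral $g(t)$ exists in the strong topology; the change of variable $s\mapsto t-s$ then immediately yields the second representation $g(t)=\int_{0}^{t}\mathcal{T}(s)f(t-s)\mathrm{d}s$. Continuity of $g$ follows from a routine splitting: for $t_{1}<t_{2}$ one bounds $\norm{g(t_{2})-g(t_{1})}$ by an integral over $[0,t_{1}]$ of $\norm{\left( \mathcal{T}(t_{2}-s)-\mathcal{T}(t_{1}-s)\right) f(s)}$ plus an integral over $[t_{1},t_{2}]$ controlled by the uniform bound \eqref{exponentiall} and the boundedness of $f$ on compacts, both of which vanish as $t_{1}\to t_{2}$.

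For the differentiable case, I would work with the second form $g(t)=\int_{0}^{t}\mathcal{T}(s)f(t-s)\mathrm{d}s$, in which the operator argument no longer depends on $t$. Applying the Leibniz integral rule—the boundary term from the moving upper limit together with differentiation of the integrand in $t$—and then undoing the substitution produces formula \eqref{Leibniz}:
\begin{equation*}
	\frac{\mathrm{d}g(t)}{\mathrm{d}t}=\mathcal{T}(t)f(0)+\int_{0}^{t}\mathcal{T}(t-s)f^{\prime}(s)\mathrm{d}s.
\end{equation*}
The continuity of $g^{\prime}$ is then immediate from this representation, since the second term is a convolution of $\mathcal{T}$ with the continuous function $f^{\prime}$, which is continuous by the first part of the lemma.

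To pass from \eqref{Leibniz} to \eqref{leibniz} I would integrate $\int_{0}^{t}\mathcal{T}(t-s)f^{\prime}(s)\mathrm{d}s$ by parts, using the product rule $\frac{\partial}{\partial s}\left[ \mathcal{T}(t-s)f(s)\right] =-A_{0}\mathcal{T}(t-s)f(s)+\mathcal{T}(t-s)f^{\prime}(s)$, which yields
\begin{equation*}
	\int_{0}^{t}\mathcal{T}(t-s)f^{\prime}(s)\mathrm{d}s=f(t)-\mathcal{T}(t)f(0)+A_{0}\int_{0}^{t}\mathcal{T}(t-s)f(s)\mathrm{d}s;
\end{equation*}
substituting into \eqref{Leibniz} cancels the $\mathcal{T}(t)f(0)$ terms and gives \eqref{leibniz}. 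The hard part will be exactly this last step: since $A_{0}$ is unbounded, one must justify that the convolution lies in $\mathcal{D}(A_{0})$ and that $A_{0}$ may be pulled out of the integral. This is precisely where the generator identities from the preliminaries—namely that $\int_{0}^{t}\mathcal{T}(s)x\,\mathrm{d}s\in\mathcal{D}(A_{0})$ with $\mathcal{T}(t)x-x=A_{0}\int_{0}^{t}\mathcal{T}(s)x\,\mathrm{d}s$ for every $x\in X$—become indispensable, as they legitimize the formal relation $\frac{\partial}{\partial s}\mathcal{T}(t-s)=-A_{0}\mathcal{T}(t-s)$ used inside the integration by parts.
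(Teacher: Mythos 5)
Your overall route is the same as the paper's: change of variables for existence, Leibniz's rule applied to $g(t)=\int_{0}^{t}\mathcal{T}(s)f(t-s)\,\mathrm{d}s$ to obtain \eqref{Leibniz}, continuity of $g^{\prime}$ read off from that representation, and an integration by parts to pass to \eqref{leibniz}. The first three steps are fine and match the paper essentially verbatim.

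The gap is in your justification of the last step. The product rule $\frac{\partial}{\partial s}\bigl[\mathcal{T}(t-s)f(s)\bigr]=-A_{0}\mathcal{T}(t-s)f(s)+\mathcal{T}(t-s)f^{\prime}(s)$ requires $f(s)\in\mathcal{D}(A_{0})$: for a vector $y\notin\mathcal{D}(A_{0})$ the orbit $s\mapsto\mathcal{T}(s)y$ is in general not differentiable, and $A_{0}\mathcal{T}(t-s)y$ need not even be defined, since $\mathcal{T}(t-s)$ maps $\mathcal{D}(A_{0})$ into itself but does not map $X$ into $\mathcal{D}(A_{0})$. The lemma assumes only that $f$ is continuously differentiable with values in $X$, so this relation is not available, and the generator identities you cite do not rescue it --- they place $A_{0}$ outside an integral of an orbit and say nothing about pointwise derivatives of orbits of arbitrary vectors. (This is exactly why the paper instead asserts the domain fact $\int_{0}^{t}\mathcal{T}(t-s)f(s)\,\mathrm{d}s\in\mathcal{D}(A_{0})$, which is the thing that actually needs proof.) The clean repair avoids differentiating the semigroup altogether: the semigroup property gives $\mathcal{T}(h)g(t)=g(t+h)-\int_{t}^{t+h}\mathcal{T}(t+h-s)f(s)\,\mathrm{d}s$, hence for $h>0$
\begin{equation*}
	\frac{\mathcal{T}(h)-I}{h}\,g(t)=\frac{g(t+h)-g(t)}{h}-\frac{1}{h}\int_{t}^{t+h}\mathcal{T}(t+h-s)f(s)\,\mathrm{d}s,
\end{equation*}
and as $h\downarrow 0$ the right-hand side converges to $g^{\prime}(t)-f(t)$, using the differentiability of $g$ already established in \eqref{Leibniz} together with strong continuity of $\mathcal{T}$ and continuity of $f$. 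By the definition of the infinitesimal generator this shows simultaneously that $g(t)\in\mathcal{D}(A_{0})$ and $A_{0}g(t)=g^{\prime}(t)-f(t)$, which is precisely \eqref{leibniz}. With this substitution in place of the pointwise integration by parts, your argument is complete.
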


\begin{theorem}\label{thm-2}
	Let $A_{0}$ be the infinitesimal generator of a strongly continuous semigroup of bounded linear operators $\left\lbrace\mathcal{T}(t)\right\rbrace_{t \geq 0}$. Let $A_{1}\in \mathcal{L}\left(X \right)$ and $g(t)$ be continuously differentiable function on $[0,\infty)$ to $X$. Then there exists a unique continuously differentiable solution $u(\cdot):[-\tau,\infty)\to X$ of the abstract Cauchy problem \eqref{non-hom} for a linear non-homogeneous delay evolution equation which is satisfying $u(t)\in \mathcal{D}\left(A_{0}\right)$ for all $t\geq 0$ with $\varphi(0)\in \mathcal{D}\left(A_{0}\right)$. This solution has a closed form:
	\begin{equation}\label{iki}
		u(t)=u_{0}(t)+u_{1}(t), \quad t\geq 0,
	\end{equation}
	where  $u_{0}(t)$ is given by  \eqref{sol-2}, and 
	\begin{align*}
		&u_{1}(t)=\sum_{n=0}^{\infty}w_{n}(t,n\tau)\mathds{1}_{t \geq n\tau}, \quad t\geq 0,\\
		&w_{0}(t,0)=\int_{0}^{t}\mathcal{T}(t-s)g(s)\mathrm{d}s, \quad t\geq 0,\\
		&w_{n}(t,n\tau)=\int_{n\tau}^{t}\mathcal{T}(t-s)A_{1}w_{n-1}(s-\tau,(n-1)\tau)\mathrm{d}s, \quad t\geq n\tau, \quad n\in \mathbb{N}.
	\end{align*}
\end{theorem}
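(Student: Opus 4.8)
The plan is to invoke the superposition principle stated just above: writing $u=u_{0}+u_{1}$, where $u_{0}$ is the classical solution of the homogeneous problem \eqref{main equation} with initial datum $\varphi$ already supplied by \eqref{sol-2}, it remains only to show that
\[
u_{1}(t)=\sum_{n=0}^{\infty}w_{n}(t,n\tau)\mathds{1}_{t\geq n\tau}
\]
is a well-defined, continuously differentiable function that solves the non-homogeneous equation with homogeneous initial data $u_{1}(t)=0$ on $[-\tau,0]$. Since existence and regularity of $u_{0}$ (including $u_{0}(t)\in\mathcal{D}(A_{0})$, which uses $\varphi(0)\in\mathcal{D}(A_{0})$) are inherited from the previous theorem, all of the new work concerns $u_{1}$.

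First I would establish convergence of the defining series. Fixing a compact interval $[0,T]$ and setting $\norm{g}_{\infty}\coloneqq\sup_{s\in[0,T]}\norm{g(s)}$, I would use the growth bound \eqref{exponentiall} together with the recursion to prove by induction, exactly as for \eqref{exponential} in Theorem \ref{main-hom}, the estimate
\[
\norm{w_{n}(t,n\tau)}\leq M\norm{g}_{\infty}\Big(M\norm{A_{1}}\exp(-\omega\tau)\Big)^{n}\frac{(t-n\tau)^{n+1}}{(n+1)!}\exp(\omega t),\quad t\geq n\tau.
\]
Comparison with the resulting majorant (a delayed exponential times $\exp(\omega t)$) gives absolute and uniform convergence in the uniform operator topology on every compact subset of $[0,\infty)$, so $u_{1}$ is well defined and continuous; moreover $u_{1}(t)=0$ for $-\tau\leq t\leq 0$, since each indicator with $n\geq1$ vanishes there while $w_{0}(0,0)=0$.

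The core of the argument is term-by-term differentiation. Because $w_{n-1}(s-\tau,(n-1)\tau)$ vanishes for $s<n\tau$, I can rewrite each term with lower limit $0$, namely $w_{n}(t,n\tau)=\int_{0}^{t}\mathcal{T}(t-s)A_{1}w_{n-1}(s-\tau,(n-1)\tau)\mathds{1}_{s\geq n\tau}\mathrm{d}s$, and apply Lemma \ref{lemma} directly. This simultaneously yields $w_{n}(t,n\tau)\in\mathcal{D}(A_{0})$ and, for $n\geq1$,
\[
w_{n}^{\prime}(t)=A_{1}w_{n-1}(t-\tau,(n-1)\tau)\mathds{1}_{t\geq n\tau}+A_{0}w_{n}(t,n\tau),
\]
while the same lemma gives $w_{0}^{\prime}(t)=g(t)+A_{0}w_{0}(t,0)$. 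An estimate on $\norm{w_{n}^{\prime}}$ of the same type (now involving $\norm{g^{\prime}}_{\infty}$) shows the differentiated series converges uniformly on compacts, justifying differentiation under the sum. Summing the derivatives and reindexing the shifted contributions by $m=n-1$ collapses $A_{1}\sum_{n\geq1}w_{n-1}(t-\tau,(n-1)\tau)\mathds{1}_{t\geq n\tau}$ into $A_{1}u_{1}(t-\tau)$ and the remaining terms into $A_{0}u_{1}(t)+g(t)$, producing precisely $u_{1}^{\prime}(t)=A_{0}u_{1}(t)+A_{1}u_{1}(t-\tau)+g(t)$. I would also verify that both $w_{n}$ and $w_{n}^{\prime}$ vanish at the junction $t=n\tau$, which guarantees $u_{1}$ is globally continuously differentiable rather than merely piecewise so.

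Finally, uniqueness would follow exactly as in part 4 of the proof of Theorem \ref{main-hom}: the difference of any two solutions satisfies the homogeneous delay equation \eqref{main equation} with vanishing data on $[-\tau,0]$, and the Gronwall-type estimate there forces it to be identically zero on $[0,\infty)$. The main obstacle is the interchange of summation and differentiation — securing the uniform bound on $\norm{w_{n}^{\prime}}$ and checking that the reindexing reproduces the genuinely delayed term $A_{1}u_{1}(t-\tau)$ and not a mis-shifted variant — but this step runs closely parallel to the homogeneous analysis already carried out for $\mathcal{S}(t;\tau)$.
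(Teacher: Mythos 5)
Your proposal is correct and follows essentially the same route as the paper: superposition with $u_{0}$ supplied by the homogeneous theorem, the same inductive bounds on $\norm{w_{n}}$ and $\norm{w_{n}^{\prime}}$ giving uniform convergence on compacts, Lemma \ref{lemma} for the differentiation, reindexing of the shifted terms to produce $A_{1}u_{1}(t-\tau)$, and uniqueness deferred to the argument of Theorem \ref{main-hom}. The only cosmetic difference is that you differentiate the series term by term and then sum (which needs closedness of $A_{0}$ to pull it through the sum, and your junction-vanishing check at $t=n\tau$), whereas the paper first reindexes the series into the integral equation $u_{1}(t)=w_{0}(t,0)+\int_{0}^{t}\mathcal{T}(t-s)A_{1}u_{1}(s-\tau)\mathrm{d}s$ and then applies Lemma \ref{lemma} once to that equation.
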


\begin{proof}
	It follows from Lemma \ref{lemma} that $w_{0}(t,0)$ is continuously differentiable on $[0,\infty)$ and hence by induction that $w_{n}(t,n\tau)\mathds{1}_{t \geq n\tau}$ is like-wise for any $n\in\mathbb{N}$ on $[0,\infty)$. In fact, by Lemma \ref{lemma}, we have
	\begin{align*}
		&w_{0}^{\prime}(t,0)=\mathcal{T}(t)g(0)+\int_{0}^{t}\mathcal{T}(t-s)g^{\prime}(s)\mathrm{d}s, \quad t\geq 0,\\
		&w_{n}^{\prime}(t,n\tau)=\int_{n\tau}^{t}\mathcal{T}(t-s)\mathcal A_{1}w_{n-1}^{\prime}(s-\tau,(n-1)\tau)\mathrm{d}s, \quad t\geq n\tau, \quad n \in \mathbb{N}.
	\end{align*}
	By using \eqref{exponentiall} it is now easy to acquire the following estimations for $t\geq n\tau$, $n\in\mathbb{N}_{0}$:
	\begin{align*}
		&\norm{w_{n}(t,n\tau)}\leq M\Big( M\norm{A_{1}}\exp(-\omega \tau)\Big)^{n}N\exp(\omega t)\frac{(t-n\tau)^{n+1}}{(n+1)!},\\
		&\norm{w_{n}^{\prime}(t,n\tau)}\leq M\Big( M\norm{A_{1}}\exp(-\omega \tau)\Big)^{n}N\exp(\omega t)\Big[\frac{(t-n\tau)^{n}}{n!}+\frac{(t-n\tau)^{n+1}}{(n+1)!}\Big],
	\end{align*}
	where $N\coloneqq \sup\limits_{t \geq 0} \left\lbrace \norm{g(t)},\norm{g^{\prime}(t)}\right\rbrace$.
	
	From these bounds, as in Theorem \ref{main-hom}, this implies that the functional series $\sum_{n=0}^{\infty}w_{n}(t,n\tau)\mathds{1}_{t\geq n\tau}$ and $\sum_{n=0}^{\infty}w_{n}^{\prime}(t,n\tau)\mathds{1}_{t\geq n\tau}$ converge uniformly in each compact subset of $[0,\infty)$ to continuous functions which are $u_{1}(t)$ and $u_{1}^{\prime}(t)$, respectively.  Furthermore, $u_{1}(t)=0$, for $-\tau \leq t \leq 0$. From the definition of $w_{n}(t,n\tau)$, $n\in \mathbb{N}_{0}$ and uniform convergence of the series $\sum_{n=0}^{\infty}w_{n}(t,n\tau)\mathds{1}_{t \geq n\tau}$ in every finite interval, it follows that
	\allowdisplaybreaks
	\begin{align}\label{special}
		u_{1}(t)&=\sum_{n=0}^{\infty}w_{n}(t,n\tau)\mathds{1}_{t \geq n\tau}=w_{0}(t,0)+\sum_{n=1}^{\infty}w_{n}(t,n\tau)\mathds{1}_{t\geq n\tau}\nonumber\\
		&=w_{0}(t,0)+
		\sum_{n=1}^{\infty}\int_{0}^{t}\mathcal{T}(t-s)A_{1}w_{n-1}(s-\tau,(n-1)\tau)\mathds{1}_{s\geq n\tau}\mathrm{d}s\nonumber\\
		&=w_{0}(t,0)+
		\int_{0}^{t}\mathcal{T}(t-s)A_{1}\sum_{n=1}^{\infty}w_{n-1}(s-\tau,(n-1)\tau)\mathds{1}_{s\geq n\tau}\mathrm{d}s\nonumber\\
		&=w_{0}(t,0)+
		\int_{0}^{t}\mathcal{T}(t-s)A_{1}\sum_{n=0}^{\infty}w_{n}(s-\tau,n\tau)\mathds{1}_{s\geq (n+1)\tau}\mathrm{d}s\nonumber\\
		&=w_{0}(t,0)+
		\int_{0}^{t}\mathcal{T}(t-s)A_{1}u_{1}(s-\tau)\mathrm{d}s, \quad t\geq 0.
	\end{align}
	Since $u_{1}(t)$ is continuously differentiable on $[0,\infty)$, by Lemma \ref{lemma}, we can differentiate \eqref{special} term-wise as follows:
	\begin{align*}
		\frac{\mathrm{d}}{\mathrm{d}t}u_{1}(t)
		&=\frac{\mathrm{d}}{\mathrm{d}t}\Big(w_{0}(t,0)+
		\int_{0}^{t}\mathcal{T}(t-s)A_{1}u_{1}(s-\tau)\mathrm{d}s\Big)\\
		&=\frac{\mathrm{d}}{\mathrm{d}t}\Big(\mathcal{T}(t)\ast g(t)\Big)+ \frac{\mathrm{d}}{\mathrm{d}t}\Big(\mathcal{T}(t)\ast \left(A_{1}u_{1}(t-\tau)\right) \Big)\\
		&=A_{0}\mathcal{T}(t)\ast g(t)+g(t)
		+A_{0}\mathcal{T}(t)\ast \left( A_{1}u_{1}(t-\tau)\right)+ A_{1}u_{1}(t-\tau)\\
		&=A_{0}\Big[w_{0}(t,0)+\int_{0}^{t}\mathcal{T}(t-s)A_{1}u_{1}(s-\tau)\mathrm{d}s\Big]+A_{1}u_{1}(t-\tau)+g(t)\\
		&=A_{0}u_{1}(t)+A_{1}u_{1}(t-\tau)+g(t), \quad t \geq 0.
	\end{align*}
	This shows immediately that $u_{1}(t)$ is a \textit{particular solution} of linear non-homogeneous abstract Cauchy problem \eqref{non-hom}. In other words, $u_{1}(t)$ is a solution of \eqref{non-hom} with \textit{zero} initial conditions, i.e., $u_{1}(t)=0$, $-\tau \leq t \leq 0$. Therefore, by Theorem \ref{main-hom}, $u(t)=u_{0}(t)+u_{1}(t)$ is a solution for linear non-homogeneous abstract Cauchy problem \eqref{non-hom}. The uniqueness of a \textit{particular solution} follows precisely as in the uniqueness proof of Theorem \ref{main-hom}. The proof is complete.
\end{proof}

\begin{remark}
	Note that the particular solution of \eqref{non-hom} has can also expressed by
	\begin{equation}\label{son-2}
		u_{1}(t)=\sum_{n=0}^{\infty}w_{n}(t,n\tau), \quad t\geq 0,
	\end{equation}
	where 
	\begin{align*}
		w_{0}(t,0)&=\int_{0}^{t}\mathcal{T}(t-s)g(s)\mathrm{d}s, \quad t\geq0,\\
		w_{n}(t,n\tau)
		&=\int_{0}^{t-n\tau}w_{n-1}(t-s-\tau,(n-1)\tau)A_{1}\mathcal{T}(s)\mathrm{d}s, \quad t\geq n\tau, \quad n \in \mathbb{N}.
	\end{align*}
\end{remark}

If we consider linear non-homogeneous abstract differential equation with a discrete delay \eqref{non-hom} on $[-\tau,T]$, then we can introduce a \textit{piece-wise} construction for a \textit{particular solution} of \eqref{non-hom} as follows.

\begin{corollary}
	Let $\mathcal{T}(t)$ be a strongly continuous semigroup of bounded linear operators with infinitesimal generator $A_{0}$  on $[0,T]$. Let $A_{1}\in \mathcal{L}\left(X \right)$ and $g(t)$ be continuously differentiable operator-valued function on $[0,T]$ to $X$.
	Then, there exists a unique continuously differentiable function $u_{1}(\cdot):[0,T]\to X$ is a particular solution of \eqref{non-hom} with zero initial conditions $u_{1}(t)=0$, $-\tau \leq t \leq 0$ and satisfying $u(t)\in \mathcal{D}\left(A_{0}\right)$ for all $0\leq t\leq T$. This solution has a closed form:
	\begin{equation}\label{son-2}
		u_{1}(t)=\sum_{k=0}^{n}w_{k}(t,k\tau), \quad n\tau < t\leq (n+1)\tau, \quad n\in \mathbb{N}_{0},
	\end{equation}
	where 
	\begin{align*}
		w_{0}(t,0)&=\int_{0}^{t}\mathcal{T}(t-s)g(s)\mathrm{d}s, \quad t\geq0,\\
		w_{k}(t,k\tau)&=\int_{k\tau}^{t}\mathcal{T}(t-s)A_{1}w_{k-1}(s-\tau,(k-1)\tau)\mathrm{d}s\\
		&=\int_{0}^{t-k\tau}w_{k-1}(t-s-\tau,(k-1)\tau)A_{1}\mathcal{T}(s)\mathrm{d}s, \quad t\geq k\tau, \quad k=1,2,\ldots,n.
	\end{align*}
\end{corollary}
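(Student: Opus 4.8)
The plan is to derive this piece-wise formula as a direct corollary of Theorem \ref{thm-2} by restricting the global particular solution to the finite horizon $[-\tau,T]$ with $T=(n+1)\tau$ and observing that on each layer $n\tau<t\leq(n+1)\tau$ only finitely many summands of the series survive. The hypotheses here --- $A_{0}$ the infinitesimal generator of a strongly continuous semigroup, $A_{1}\in\mathcal{L}(X)$, and $g$ continuously differentiable --- coincide verbatim with those of Theorem \ref{thm-2}, so the existence of a unique continuously differentiable particular solution $u_{1}(\cdot)$ with $u_{1}(t)=0$ on $[-\tau,0]$ and $u_{1}(t)\in\mathcal{D}(A_{0})$ for $t\geq 0$ is already guaranteed. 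What remains is purely to recast the infinite series as a finite sum on each layer; no new analytic estimate is needed.

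First I would recall from Theorem \ref{thm-2} that $u_{1}(t)=\sum_{k=0}^{\infty}w_{k}(t,k\tau)\mathds{1}_{t\geq k\tau}$, and fix $t$ with $n\tau<t\leq(n+1)\tau$ for some $n\in\mathbb{N}_{0}$. Inspecting the indicator term by term: for every $k\leq n$ one has $k\tau\leq n\tau<t$, hence $\mathds{1}_{t\geq k\tau}=1$; for every $k\geq n+2$ one has $k\tau\geq(n+2)\tau>(n+1)\tau\geq t$, hence $\mathds{1}_{t\geq k\tau}=0$. The only delicate index is $k=n+1$: when $t<(n+1)\tau$ the indicator vanishes, while at the right endpoint $t=(n+1)\tau$ the indicator switches on, but there the summand is $w_{n+1}\big((n+1)\tau,(n+1)\tau\big)=\int_{(n+1)\tau}^{(n+1)\tau}\mathcal{T}(t-s)A_{1}w_{n}(s-\tau,n\tau)\,\mathrm{d}s=0$, so it contributes nothing. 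Thus on the entire layer $n\tau<t\leq(n+1)\tau$ the series collapses to the finite sum $u_{1}(t)=\sum_{k=0}^{n}w_{k}(t,k\tau)$, which is the claimed representation.

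Next I would record the two equivalent recursive definitions of $w_{k}$. The first, $w_{k}(t,k\tau)=\int_{k\tau}^{t}\mathcal{T}(t-s)A_{1}w_{k-1}(s-\tau,(k-1)\tau)\,\mathrm{d}s$, is inherited directly from Theorem \ref{thm-2}; the second, $w_{k}(t,k\tau)=\int_{0}^{t-k\tau}w_{k-1}(t-s-\tau,(k-1)\tau)A_{1}\mathcal{T}(s)\,\mathrm{d}s$, is the alternative form noted in the Remark following Theorem \ref{thm-2}, obtained from the substitution $s\mapsto t-s$ together with the convolution symmetry $\int_{0}^{t}\mathcal{T}(t-s)h(s)\,\mathrm{d}s=\int_{0}^{t}\mathcal{T}(s)h(t-s)\,\mathrm{d}s$ furnished by Lemma \ref{lemma}.

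The main obstacle --- really the only point beyond bookkeeping --- is confirming that the finitely truncated expressions match across successive layers, i.e.\ that the representation is consistent at the junction points $t=m\tau$. Since the point $(n+1)\tau$ belongs to the layer $n\tau<t\leq(n+1)\tau$ (where $u_{1}=\sum_{k=0}^{n}w_{k}$), while $t\to(n+1)\tau^{+}$ enters the next layer (where $u_{1}=\sum_{k=0}^{n+1}w_{k}$), continuity across the junction requires precisely $w_{n+1}\big((n+1)\tau,(n+1)\tau\big)=0$, which holds because the defining integral is taken over a degenerate interval. As the truncated expressions are merely layer-by-layer renderings of the single global function $u_{1}$ from Theorem \ref{thm-2}, continuity, continuous differentiability, the domain membership $u_{1}(t)\in\mathcal{D}(A_{0})$, and uniqueness all transfer verbatim, so no separate argument is required.
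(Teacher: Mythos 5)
Your proof is correct and matches the paper's approach: the paper states this corollary without a separate proof, treating it exactly as you do --- as the restriction of the series solution of Theorem \ref{thm-2} to $[-\tau,T]$, where on each layer $n\tau<t\leq(n+1)\tau$ the indicator functions eliminate all but the first $n+1$ terms (the $k=n+1$ term contributing nothing at the right endpoint because its defining integral degenerates). The alternative recursion for $w_{k}$ is likewise inherited from the Remark following Theorem \ref{thm-2}, just as you note, so nothing beyond your truncation bookkeeping is required.
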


The particular solution $u_{1}(t)$, $t \geq 0$ of \eqref{non-hom} can also be put in a more suggestive form. By making use of the series representation of \textit{fundamental solution} to the abstract Cauchy problem  \eqref{main equation} and Fubini's theorem, we attain:
\begin{align*}
	&\int_{0}^{t}S_{n}(t-s,n\tau)\mathds{1}_{t-s \geq n\tau}g(s)\mathrm{d}s\\
	=&\int_{0}^{t}\int_{0}^{t-s}\mathcal{T}(t-s-\sigma)A_{1}S_{n-1}(\sigma-\tau,(n-1)\tau)\mathds{1}_{\sigma \geq n\tau}g(s) \mathrm{d}\sigma\mathrm{d}s\\
	=&\int_{0}^{t}\int_{s}^{t}\mathcal{T}(t-\sigma)A_{1}S_{n-1}(\sigma-s-\tau,(n-1)\tau)\mathds{1}_{\sigma-s \geq n\tau}g(s) \mathrm{d}\sigma\mathrm{d}s\\
	=&\int_{0}^{t}\mathcal{T}(t-\sigma)A_{1}
	\Big[\int_{0}^{\sigma}S_{n-1}(\sigma-s,n\tau)\mathds{1}_{\sigma-s \geq n\tau}g(s)\mathrm{d}s\Big]\mathrm{d}\sigma, \quad n\in \mathbb{N}, \quad t\geq 0.
\end{align*}
Hence, by mathematical induction principle, we get  
\begin{equation*}
	w_{n}(t,n\tau)\mathds{1}_{t\geq n\tau}=\int_{0}^{t}S_{n}(t-s,n\tau)\mathds{1}_{t-s \geq n\tau}g(s)\mathrm{d}s, \quad n\in\mathbb{N}, \quad t\geq 0.
\end{equation*}
Finally, because of the \textit{uniform convergence}  of the series in the \textit{uniform operator topology}, we derive that
\begin{align}\label{last}
	u_{1}(t)&=\sum_{n=0}^{\infty}w_{n}(t,n\tau)\mathds{1}_{t\geq n\tau}\nonumber\\
	&=\int_{0}^{t}\mathcal{T}(t-s)g(s)\mathrm{d}s+
	\sum_{n=1}^{\infty}\int_{0}^{t}S_{n}(t- s,n\tau)\mathds{1}_{t-s \geq n\tau}g(s)\mathrm{d}s\nonumber\\
	&=\sum_{n=0}^{\infty}\int_{0}^{t}S_{n}(t- s,n\tau)\mathds{1}_{t-s \geq n\tau}g(s)\mathrm{d}s\nonumber\\
	&=\int_{0}^{t}\sum_{n=0}^{\infty}S_{n}(t- s,n\tau)\mathds{1}_{t-s\geq n\tau}g(s)\mathrm{d}s\nonumber\\
	&=\int_{0}^{t}\mathcal{S}(t-s;\tau)g(s)\mathrm{d}s, \quad t\geq 0.
\end{align}

\section{Delay evolution equations with bounded linear operators}\label{sect:bounded}
In this section, we consider different important cases of delay evolution equation \eqref{main equation} with bounded linear operators $A_{0},A_{1}\in\mathcal{L}\left( X\right)$ in a Banach space $X$. We will take a \textit{domain of equation} as $[0,T]$ where $T=(n+1)\tau$ for a fixed $n\in\mathbb{N}_{0}$ and use a \textit{piece-wise} construction for the \textit{delayed} operator-valued functions. It is known that, in this case, the domain $\mathcal{D}\left(A_{0}\right)$ coincides with the state space $X$, i.e., $\mathcal{D}\left(A_{0}\right)=X$, and a one-parameter $C_{0}$-semigroup family of bounded linear operators $\mathcal{T}(t)$, $0\leq t\leq T$  which is continuous with respect to \textit{uniform} operator topology defined by
\begin{equation}\label{exp}
	\mathcal{T}(t)=\exp\left( A_{0}t\right) =\sum_{k=0}^{\infty}A^{k}_{0}\frac{t^{k}}{k!}.
\end{equation}

For a fixed $n\in \mathbb{N}$ and time-delay $\tau>0$, we define the following sequence of operator-valued functions via a \textit{recursive} way:
\begin{align}\label{seq}
	&S_{0}(t,0)\coloneqq \exp\left( A_{0}t\right) , \quad t \geq 0,\nonumber\\
	&S_{1}(t,\tau)\coloneqq\begin{cases}
		\int_{\tau}^{t}\exp\left( A_{0}(t-s)\right)A_{1}	S_{0}(s-\tau,0)\mathrm{d}s, \quad t \geq \tau, \\
		\Theta, \quad t < \tau,
	\end{cases}\nonumber\\
	& \ldots \qquad \ldots \qquad \ldots \qquad \ldots \qquad \ldots \qquad \ldots \qquad \ldots \qquad \ldots \qquad \ldots\nonumber\\
	&S_{n}(t,n\tau)\coloneqq\begin{cases}
		\int_{n\tau}^{t}\exp\left( A_{0}(t-s)\right) A_{1}	S_{n-1}(s-\tau,(n-1)\tau)\mathrm{d}s, \quad t\geq n\tau, \\
		\Theta, \quad t < n\tau.
	\end{cases}
\end{align}

Therefore, in this case, the \textit{fundamental solution} of abstract Cauchy problem \eqref{main equation} with bounded linear operators which is represented by \textit{delayed Dyson-Phillips series} becomes the \textit{delayed perturbation of an operator-valued exponential function}.

\begin{definition}
	The delayed perturbation of an operator-valued exponential function $\mathcal{S}(\cdot;\tau):\mathbb{R}\to\mathcal{L}\left(X \right)$ generated by  linear operators $A_{0},A_{1} \in\mathcal{L}\left(X\right) $ is defined by
	\begin{equation}\label{defn}
		\mathcal{S}(t;\tau)\coloneqq\begin{cases}
			\Theta, \quad  -\infty<t<0, \\
			I, \quad t=0, \\
			\exp\left( A_{0}t\right) +S_{1}(t,\tau)+\dots+
			S_{n}(t,n\tau), \quad n\tau< t \leq  (n+1)\tau, \quad  n\in \mathbb{N}_{0}.
		\end{cases}
	\end{equation}
\end{definition}

Depending on the relation between bounded linear operators $A_{0}$ and $A_{1}$, one can obtain various representation formulae for the \textit{delayed perturbation of an operator-valued exponential function}.

The following theorem deals with the fundamental solution of delay evolution equation \eqref{main equation} with \textit{non-permutable} bounded linear operators.
\allowdisplaybreaks
\begin{theorem}\label{non-permutable-1}
	Let $A_{0},A_{1} \in \mathcal{L}(X)$ with non-zero commutator $[A_{0},A_{1}]\coloneqq A_{0}A_{1}-A_{1}A_{0}\neq 0$. Then a fundamental solution  $\mathcal{S}(\cdot;\tau):[-\tau,T] \to \mathcal{L}\left(X \right)$ of linear homogeneous evolution equation with a constant delay \eqref{main equation} which is satisfying the following initial conditions
	\begin{equation}\label{data}
		\mathcal{S}(t;\tau)=\begin{cases}
			\Theta,\quad  -\tau \leq t <0,\\
			I, \quad t=0,
		\end{cases}	
	\end{equation}
	can be represented by
	
	\begin{equation}\label{formul}
		\mathcal{S}(t;\tau)=\sum_{k=l}^{\infty}\sum_{l=0}^{n}\mathcal{Q}_{k+1}(l\tau)\frac{(t-l\tau)^{k}}{k!}, \quad n\tau < t \leq  (n+1)\tau, \quad n \in \mathbb{N}_{0},
	\end{equation}
	where a linear operator family $\left\lbrace \mathcal{Q}_{k+1}(l\tau): k,l\in \mathbb{N}_{0}\right\rbrace \subset  \mathcal{L}\left( X\right) $ is given by
	\begin{align}\label{recursive-2}
		\mathcal{Q}_{k+1}(0)\coloneqq A_{0}^{k}, \quad k\in \mathbb{N}_{0}, \quad  \mathcal{Q}_{k+1}(l\tau)\coloneqq \sum_{m=l}^{k}A_{0}^{k-m}A_{1}
		\mathcal{Q}_{m}((l-1)\tau), \quad k,l\in \mathbb{N}.
	\end{align}
\end{theorem}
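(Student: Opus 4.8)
The plan is to specialize the delayed Dyson--Phillips series of Theorem~\ref{main-hom} to the case of a bounded generator, where $\mathcal{T}(t)=\exp(A_{0}t)$ is given by \eqref{exp}, and then to compute each summand $S_{l}(t,l\tau)$ explicitly. Since $S_{l}(t,l\tau)=\Theta$ for $t<l\tau$ by \eqref{seq}, on the interval $n\tau<t\leq(n+1)\tau$ the series \eqref{power} truncates to the finite sum $\mathcal{S}(t;\tau)=\sum_{l=0}^{n}S_{l}(t,l\tau)$. Hence the statement reduces to the single claim that, for every $l\in\mathbb{N}_{0}$,
\begin{equation*}
	S_{l}(t,l\tau)=\sum_{k=l}^{\infty}\mathcal{Q}_{k+1}(l\tau)\frac{(t-l\tau)^{k}}{k!}, \qquad t\geq l\tau,
\end{equation*}
with $\mathcal{Q}_{k+1}(l\tau)$ defined by \eqref{recursive-2}; summing over $l=0,\dots,n$ then yields \eqref{formul}.

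I would prove this claim by induction on $l$. For $l=0$ it is exactly the power series $S_{0}(t,0)=\exp(A_{0}t)=\sum_{k=0}^{\infty}A_{0}^{k}\,t^{k}/k!$, whose coefficients are $\mathcal{Q}_{k+1}(0)=A_{0}^{k}$ by definition. For the inductive step, I substitute $\exp(A_{0}(t-s))=\sum_{i=0}^{\infty}A_{0}^{i}(t-s)^{i}/i!$ together with the induction hypothesis for $S_{l-1}(s-\tau,(l-1)\tau)$ into the recursion
\begin{equation*}
	S_{l}(t,l\tau)=\int_{l\tau}^{t}\exp\!\big(A_{0}(t-s)\big)\,A_{1}\,S_{l-1}(s-\tau,(l-1)\tau)\,\mathrm{d}s.
\end{equation*}
Because $A_{0}$ is bounded, both series converge absolutely and uniformly on compact $s$-intervals, so summation and integration may be interchanged.

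After the substitution $u=s-l\tau$ (noting $(s-\tau)-(l-1)\tau=s-l\tau$ and $t-s=(t-l\tau)-u$), the central computation is the Beta-type integral
\begin{equation*}
	\int_{0}^{t-l\tau}\big((t-l\tau)-u\big)^{i}\,u^{k'}\,\mathrm{d}u=(t-l\tau)^{i+k'+1}\,\frac{i!\,k'!}{(i+k'+1)!}.
\end{equation*}
Setting $k=i+k'+1$ cancels the factorials and collapses the double sum into $\sum_{k\geq l}\big(\sum_{i=0}^{k-l}A_{0}^{i}A_{1}\mathcal{Q}_{k-i}((l-1)\tau)\big)(t-l\tau)^{k}/k!$; reindexing by $m=k-i$ shows the inner coefficient equals $\sum_{m=l}^{k}A_{0}^{k-m}A_{1}\mathcal{Q}_{m}((l-1)\tau)=\mathcal{Q}_{k+1}(l\tau)$, exactly as in \eqref{recursive-2}, which closes the induction.

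The interchange of sum and integral and the Beta integral are routine; the main obstacle is the index bookkeeping, namely verifying that the lowest power occurring in $S_{l}$ is indeed $k=l$ (so that the summation ranges match the claimed double sum) and that the reindexing $k=i+k'+1$, $m=k-i$ reproduces the recursion \eqref{recursive-2} verbatim. Since $[A_{0},A_{1}]\neq0$, the factors $A_{0}^{i}$ and $A_{1}$ must be kept in their given order throughout, and this ordering is precisely what the noncommutative coefficients $\mathcal{Q}_{k+1}(l\tau)$ are designed to record.
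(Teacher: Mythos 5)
Your proposal is correct and follows essentially the same route as the paper: specialize the Dyson--Phillips recursion \eqref{seq} to $\mathcal{T}(t)=\exp(A_{0}t)$, substitute the exponential series, interchange summation and integration, evaluate the Beta-type integral $\int_{l\tau}^{t}\frac{(t-s)^{i}}{i!}\frac{(s-l\tau)^{k'}}{k'!}\,\mathrm{d}s=\frac{(t-l\tau)^{i+k'+1}}{(i+k'+1)!}$, and reindex to recover the recursion \eqref{recursive-2}, finally summing the finitely many nonvanishing terms on $(n\tau,(n+1)\tau]$. The only difference is organizational: the paper carries out the cases $n=0,1,2$ explicitly and then asserts the general case ``recursively,'' whereas you cast the same computation as a clean induction on $l$, which is if anything a more rigorous write-up of the identical argument.
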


\begin{proof}
	By making use of the recursive sequence of operator-valued functions \eqref{seq}, we derive the basis case for $n=0$:
	\begin{align*}
		S_{0}(t,0)=\exp\left( A_{0}t\right) =\sum_{k=0}^{\infty}A_{0}^{k}\frac{t^{k}}{k!}=&\sum_{k=0}^{\infty}\mathcal{Q}_{k+1}(0)\frac{t^{k}}{k!}, \quad t \geq 0, \\
		&\mathcal{Q}_{k+1}(0)\coloneqq A_{0}^{k}, \quad k=0,1,\ldots
	\end{align*}
	\allowdisplaybreaks
	For $n=1$, by making use of the well-known Cauchy product formula for double infinite series and interchanging the order of summation and integration which is permissible in accordance with the uniform convergence of the series \eqref{exp}, we get: 
	\begin{align*}
		S_{1}(t,\tau)&=\int_{\tau}^{t}\exp\left( A_{0}(t-s)\right) A_{1}\exp\left( A_{0}(s-\tau)\right) \mathrm{d}s\\
		&=\sum_{k=0}^{\infty}\sum_{m=0}^{\infty}A_{0}^{k}A_{1}A_{0}^{m}\int_{\tau}^{t}\frac{(t-s)^{k}}{k!}\frac{(s-\tau)^{m}}{m!}\mathrm{d}s\\
		&=\sum_{k=0}^{\infty}\sum_{m=0}^{\infty}A_{0}^{k}A_{1}A_{0}^{m}\frac{(t-\tau)^{k+m+1}}{(k+m+1)!}\\
		&=\sum_{k=0}^{\infty}\sum_{m=0}^{k}A_{0}^{k-m}A_{1}A_{0}^{m}\frac{(t-\tau)^{k+1}}{(k+1)!}\\
		&=\sum_{k=1}^{\infty}\sum_{m=0}^{k-1}A_{0}^{k-m-1}A_{1}A_{0}^{m}\frac{(t-\tau)^{k}}{k!}\\
		&=\sum_{k=1}^{\infty}\sum_{m=1}^{k}A_{0}^{k-m}A_{1}A_{0}^{m-1}\frac{(t-\tau)^{k}}{k!}\\
		&=\sum_{k=1}^{\infty}\sum_{m=1}^{k}A_{0}^{k-m}A_{1}\mathcal{Q}_{m}(0)\frac{(t-\tau)^{k}}{k!}\\
		&=\sum_{k=1}^{\infty}\mathcal{Q}_{k+1}(\tau)\frac{(t-\tau)^{k}}{k!}, \quad t \geq \tau,\\
		& \hspace{0.2cm} \mathcal{Q}_{k+1}(\tau)\coloneqq \sum_{m=1}^{k}A_{0}^{k-m}A_{1}\mathcal{Q}_{m}(0),\quad k=1,2,\ldots
	\end{align*}
	In a recursive way, for $n=2$, by using $\mathcal{Q}_{1}(\tau)=\Theta$, we assure that
	\begin{align*}
		S_{2}(t,2\tau)&=\int_{2\tau}^{t}\exp\left( A_{0}(t-s)\right) A_{1}S_{1}(s-\tau,\tau)\mathrm{d}s\\
		&=\sum_{k=0}^{\infty}\sum_{m=1}^{\infty}A_{0}^{k}A_{1}\mathcal{Q}_{m+1}(\tau)\int_{2\tau}^{t}\frac{(t-s)^{k}}{k!}\frac{(s-2\tau)^{m}}{m!}\mathrm{d}s\\
		&=\sum_{k=0}^{\infty}\sum_{m=1}^{\infty}A_{0}^{k}A_{1}\mathcal{Q}_{m+1}(\tau)\frac{(t-2\tau)^{k+m+1}}{(k+m+1)!}\\
		&=\sum_{k=1}^{\infty}\sum_{m=1}^{k}A_{0}^{k-m}A_{1}\mathcal{Q}_{m+1}(\tau)\frac{(t-2\tau)^{k+1}}{(k+1)!}\\
		&=\sum_{k=2}^{\infty}\sum_{m=1}^{k-1}A_{0}^{k-m-1}A_{1}\mathcal{Q}_{m+1}(\tau)\frac{(t-2\tau)^{k}}{k!}\\
		&=\sum_{k=2}^{\infty}\sum_{m=2}^{k}A_{0}^{k-m}A_{1}\mathcal{Q}_{m}(\tau)\frac{(t-2\tau)^{k}}{k!}\\
		&=\sum_{k=2}^{\infty}\mathcal{Q}_{k+1}(2\tau)\frac{(t-2\tau)^{k}}{k!}, \quad t \geq 2\tau,\\
		& \hspace{0.2 cm} \mathcal{Q}_{k+1}(2\tau)\coloneqq \sum_{m=2}^{k}A_{0}^{k-m}A_{1}\mathcal{Q}_{m}(\tau),\quad k=2,3,\ldots
	\end{align*}
	Recursively, for the $n$-th case, it yields that 
	\begin{align*}
		S_{n}(t,n\tau)&=\int_{n\tau}^{t}\exp\left( A_{0}(t-s)\right) A_{1}S_{n-1}(s-\tau,(n-1)\tau)\mathrm{d}s\\
		&=\sum_{k=n}^{\infty}\mathcal{Q}_{k+1}(n\tau)\frac{(t-n\tau)^{k}}{k!},  \quad t\geq n\tau,\\ &\hspace{0.2 cm}\mathcal{Q}_{k+1}(n\tau)\coloneqq \sum_{m=n}^{k}A_{0}^{k-m}A_{1}\mathcal{Q}_{m}((n-1)\tau),\quad k=n,n+1, \ldots
	\end{align*}
	
	Therefore, the fundamental solution $\mathcal{S}(\cdot;\tau): [-\tau,T] \to \mathcal{L}\left(X \right)$  of \eqref{main equation} which is satisfying the initial conditions \eqref{data} is a summation of the finite number of above terms and we attain the following desired result:
	\begin{align*}
		\mathcal{S}(t;\tau)&=\sum_{k=0}^{\infty}\mathcal{Q}_{k+1}(0)\frac{t^{k}}{k!}+\sum_{k=1}^{\infty}\mathcal{Q}_{k+1}(\tau)\frac{(t-\tau)^{k}}{k!}+\ldots+\sum_{k=n}^{\infty}\mathcal{Q}_{k+1}(n\tau)\frac{(t-n\tau)^{k}}{k!}\\&=
		\sum_{k=l}^{\infty}\sum_{l=0}^{n}\mathcal{Q}_{k+1}(l\tau)\frac{(t-l\tau)^{k}}{k!}, \quad n\tau< t \leq  (n+1)\tau, \quad n \in \mathbb{N}_{0}.
	\end{align*}
	\allowdisplaybreaks
	The proof is complete.
\end{proof}

\begin{remark}
	It should be noted that this particular case is a natural extension of the results obtained by Mahmudov in \cite{mahmudov} concerning non-commutative matrix coefficients for time-delay systems of fractional order.
\end{remark}

A linear operator family $\left\lbrace \mathcal{Q}_{k+1}(l\tau): k,l\in \mathbb{N}_{0}\right\rbrace \subset  \mathcal{L}\left( X\right)$ plays a role as a kernel for the delayed perturbation of operator-valued exponential functions. With the help of formulae \eqref{recursive-2}, simple calculations show that

\begin{center}
	\begin{tabular}{| l | l | l | l | l | l | p{0.7cm} |}
		\hline
		$\mathcal{Q}_{k+1}(l\tau)$ & $l=0$ & $l=1$ & $l=2$ & $l=3$ & \ldots&  $l=n$  \\ \hline
		$k=0$ & $I$ & $\Theta$ & $\Theta$ & $\Theta$ & \ldots & $\Theta$ \\ \hline
		$k=1$ & $A_{0}$ & $A_{1}$ & $\Theta$ & $\Theta$ & \ldots & $\Theta$ \\ \hline
		$k=2$ & $A_{0}^{2}$ & $A_{0}A_{1}+A_{1}A_{0}$ & $A_{1}^{2}$ & $\Theta$ & \ldots & $\Theta$ \\
		\hline
		$k=3$ & $A_{0}^{3}$ & $A_{0}\left( A_{0}A_{1}+A_{1}A_{0}\right) +A_{1}A_{0}^{2}$ & $A_{1}A_{0}^{2}+ A_{1}\left( A_{0}A_{1}+A_{1}A_{0}\right)$& $A_{1}^{3}$ & \ldots & $\Theta$ \\
		\hline	
		\ldots & \ldots & \ldots & \ldots & \ldots & \ldots & \ldots \\ \hline
		$k=n$ & $A_{0}^{n}$ & \ldots &\ldots & \ldots & \ldots & $A_{1}^{n}$ \\ \hline
		\ldots & \ldots & \ldots & \ldots & \ldots & \ldots & \ldots \\ \hline
	\end{tabular}
\end{center}

From this table, we can derive the following important result:
\begin{itemize}
	\item If $l\geq k+1$ for $k\geq 0$, then $\mathcal{Q}_{k+1}(l\tau)=\Theta$. Therefore, a linear operator family $\left\lbrace \mathcal{Q}_{k+1}(l\tau): k,l\in \mathbb{N}_{0}\right\rbrace \subset  \mathcal{L}\left( X\right)$ is a \textit{lower triangular operator-matrix};
	\item If $A_{1}=\Theta$, then for $k\in \mathbb{N}_{0}$, we have: 
	\begin{equation*}
		\mathcal{Q}_{k+1}(l\tau)=\begin{cases}
			A_{0}^{k},\quad l=0,\\
			\Theta , \hspace{0.55 cm} l \in \mathbb{N},
		\end{cases}
	\end{equation*}
	and a \textit{fundamental solution} becomes an operator-valued exponential function: 
	\begin{equation*}
		\mathcal{S}(t;\tau)=\exp\left( A_{0}t\right), \quad t\geq 0.
	\end{equation*}
	\item If $A_{0}=\Theta$, then for $k,l \in \mathbb{N}_{0}$, we have:
	\begin{equation*}
		\mathcal{Q}_{k+1}(l\tau)=\begin{cases}
			A_{1}^{l},\quad k=l,\\
			\Theta , \hspace{0.55 cm} k \neq l,
		\end{cases}
	\end{equation*}
	and a \textit{fundamental solution} becomes the \textit{pure delayed} operator-valued exponential function as below:
	\begin{align*}
		\mathcal{S}(t;\tau)=\exp_{\tau}\left(A_{1}t\right)&=\mathcal{Q}_{1}(0)+\mathcal{Q}_{2}(\tau)(t-\tau)+\ldots+\mathcal{Q}_{n+1}(n\tau)\frac{(t-n\tau)^{n}}{n!}\\
		&=I+A_{1}(t-\tau)+\ldots+A_{1}^{n}\frac{(t-n\tau)^{n}}{n!}\\
		&=\sum_{l=0}^{n}A_{1}^{l}\frac{(t-l\tau)^{l}}{l!}, \quad n\tau< t \leq  (n+1)\tau, \quad n\in \mathbb{N}_{0}.
	\end{align*}
\end{itemize}

Note that a \textit{piece-wise} construction of the \textit{pure delayed} operator-valued exponential function $\exp_{\tau}\left(A_{1}\cdot\right):[-\tau,T]\to \mathcal{L}\left(X\right)$ generated by a linear operator $A_{1}\in \mathcal{L}\left(X\right)$ can be written via the following \textit{explicit} formula: 
\begin{equation}\label{pure-delayed}
	\exp_{\tau}\left(A_{1}t\right) =\begin{cases}
		\Theta, \quad -\tau \leq t< 0,\\
		I, \quad t=0,\\
		I+A_{1}\left(t-\tau\right)+A_{1}^{2}\frac{(t-2\tau)^{2}}{2!} +\ldots+A_{1}^{n}\frac{(t-n\tau)^{n}}{n!}, \quad n\tau <t \leq (n+1)\tau, \quad n\in \mathbb{N}_{0}.
	\end{cases}
\end{equation}

A linear operator family $\left\lbrace \mathcal{Q}_{k+1}(l\tau): k,l\in \mathbb{N}_{0}\right\rbrace \subset  \mathcal{L}\left( X\right)$ has the following properties depending on the commutativity condition of $A_{0},A_{1}\in \mathcal{L}\left(X \right)$.

\begin{theorem}
	A linear operator family $\left\lbrace \mathcal{Q}_{k+1}(l\tau): k,l\in \mathbb{N}_{0}\right\rbrace \subset  \mathcal{L}\left( X\right)$ has the following properties:
	
	$(i)$ For any (permutable and non-permutable) linear operators $A_{0},A_{1} \in \mathcal{L}\left( X\right)$, we have
	\begin{align}\label{one}
		\mathcal{Q}_{k+1}(l\tau)&=A_{0}\mathcal{Q}_{k}(l\tau)+A_{1}\mathcal{Q}_{k}((l-1)\tau),\\
		\mathcal{Q}_{0}(l\tau)&=\mathcal{Q}_{k}(-\tau)=\Theta, \quad   k,l\in\mathbb{N}_{0} \nonumber.
	\end{align}
	
	$(ii)$ If $A_{0}A_{1}=A_{1}A_{0}$, then we have
	\begin{equation}\label{two}
		\mathcal{Q}_{k+1}(l\tau)=\binom{k}{l}A_{0}^{k-l}A_{1}^{l}, \quad  k,l\in \mathbb{N}_{0}.
	\end{equation}
\end{theorem}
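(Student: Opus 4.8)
The plan is to treat the two statements separately: part $(i)$ follows by peeling off the top term of the defining sum \eqref{recursive-2}, while part $(ii)$ is an induction on $k$ that feeds the recurrence of $(i)$ into Pascal's rule, with the permutability hypothesis used exactly once to slide $A_1$ past a power of $A_0$. For part $(i)$, fix $l\geq 1$ and recall from \eqref{recursive-2}, both in its stated form and in its form with $k$ replaced by $k-1$, that
\begin{equation*}
	\mathcal{Q}_{k+1}(l\tau)=\sum_{m=l}^{k}A_{0}^{k-m}A_{1}\mathcal{Q}_{m}((l-1)\tau),\qquad \mathcal{Q}_{k}(l\tau)=\sum_{m=l}^{k-1}A_{0}^{k-1-m}A_{1}\mathcal{Q}_{m}((l-1)\tau).
\end{equation*}
Multiplying the second identity on the left by $A_0$ gives $A_0\mathcal{Q}_k(l\tau)=\sum_{m=l}^{k-1}A_0^{k-m}A_1\mathcal{Q}_m((l-1)\tau)$, so that subtracting cancels every summand except the $m=k$ term $A_1\mathcal{Q}_k((l-1)\tau)$; this is exactly \eqref{one}. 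The case $l=0$ is immediate, since \eqref{recursive-2} gives $\mathcal{Q}_{k+1}(0)=A_0^{k}=A_0\,\mathcal{Q}_k(0)$ for $k\geq1$ and the convention $\mathcal{Q}_k(-\tau)=\Theta$ annihilates the second term, while the genuinely initial value $\mathcal{Q}_1(0)=I$ is supplied directly by \eqref{recursive-2}. When $l>k$ both sides vanish by the empty-sum convention, matching the lower-triangular pattern already recorded in the table.

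For part $(ii)$, assume $A_0A_1=A_1A_0$ and induct on $k$ using \eqref{one} as the inductive step. For the base case $k=0$ one has $\mathcal{Q}_1(0)=I=\binom{0}{0}A_0^{0}A_1^{0}$, while for $l\geq1$ the coefficient $\binom{0}{l}$ vanishes so the right-hand side of \eqref{two} equals $\Theta=\mathcal{Q}_1(l\tau)$. Assuming $\mathcal{Q}_k(j\tau)=\binom{k-1}{j}A_0^{k-1-j}A_1^{j}$ for every $j$, the recurrence \eqref{one} yields
\begin{align*}
	\mathcal{Q}_{k+1}(l\tau)&=A_0\binom{k-1}{l}A_0^{k-1-l}A_1^{l}+A_1\binom{k-1}{l-1}A_0^{k-l}A_1^{l-1}\\
	&=\binom{k-1}{l}A_0^{k-l}A_1^{l}+\binom{k-1}{l-1}A_0^{k-l}A_1^{l}\\
	&=\binom{k}{l}A_0^{k-l}A_1^{l},
\end{align*}
where the commutativity $A_0A_1=A_1A_0$ is precisely what permits $A_1$ to pass through $A_0^{k-l}$ in the second summand, and the final step is Pascal's rule $\binom{k-1}{l}+\binom{k-1}{l-1}=\binom{k}{l}$. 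This closes the induction and establishes \eqref{two}.

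There is no serious obstacle here; both parts are elementary once the defining sum \eqref{recursive-2} is read carefully. The only points demanding attention are bookkeeping rather than ideas. In $(i)$ one must honour the conventions $\mathcal{Q}_0(l\tau)=\mathcal{Q}_k(-\tau)=\Theta$ and treat $\mathcal{Q}_1(0)=I$ as initial data rather than as an output of the recurrence, which strictly holds for $k\geq1$ (and degenerately for $k=0$, $l\geq1$). In $(ii)$ one must track the binomial coefficients through the boundary indices $l=0$ and $l=k$, where \eqref{two} correctly reproduces $A_0^{k}$ and $A_1^{k}$, and confirm that the single invocation of commutativity in the induction step is the only place the permutability hypothesis enters.
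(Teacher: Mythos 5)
Your proof is correct, and it diverges from the paper's in a worthwhile way on part (ii). For part (i) you and the paper perform essentially the same manipulation: the paper splits the defining sum \eqref{recursive-2} into its $m=k$ term plus $A_0$ times the sum defining $\mathcal{Q}_{k}((l+1)\tau)$ (it dresses this as induction on $l$, but the inductive hypothesis is never actually invoked), while you phrase it as subtracting the defining sum of $A_0\mathcal{Q}_k(l\tau)$ from that of $\mathcal{Q}_{k+1}(l\tau)$ so that everything cancels except the top term --- the identical cancellation. For part (ii), however, the paper does not use part (i) at all: it inducts on $l$ with $k$ fixed, substitutes the inductive hypothesis into the full sum \eqref{recursive-2}, commutes $A_1$ past the powers of $A_0$ inside each summand, and closes with the hockey-stick identity $\sum_{m=l+1}^{k}\binom{m-1}{l}=\binom{k}{l+1}$. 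You instead induct on $k$, feed the two-term recurrence \eqref{one} into the inductive hypothesis, and finish with Pascal's rule $\binom{k-1}{l}+\binom{k-1}{l-1}=\binom{k}{l}$. Your route is shorter, uses commutativity exactly once and visibly, replaces a summation identity by the two-term Pascal identity, and makes explicit the logical dependence of (ii) on (i) --- which is precisely the relationship the paper itself highlights in the remark following the theorem, where \eqref{one} combined with \eqref{two} is identified as Pascal's rule for operators; your induction is that remark run as a proof. One further point in your favour: you correctly flag that \eqref{one}, read literally at $k=l=0$, fails ($\mathcal{Q}_1(0)=I$ while $A_0\mathcal{Q}_0(0)+A_1\mathcal{Q}_0(-\tau)=\Theta$), so that $\mathcal{Q}_1(0)=I$ must be treated as initial data and the recurrence asserted only for $k\geq 1$; the paper's base case silently assumes $k\geq 1$ and never addresses this edge case.
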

\begin{proof}
	$(i)$ By making use of the mathematical induction principle, we can prove \eqref{one} is true for all $l\in \mathbb{N}_{0}$ for a fixed $k\in \mathbb{N}_{0}$. It is obvious that the formula \eqref{one} holds true for $l=0$. Since $	\mathcal{Q}_{k+1}(0)=A_{0}^{k}$ and $\mathcal{Q}_{k}(-\tau)=\Theta$ for any $k\in \mathbb{N}_{0}$, for $l=0$, we obtain:
	\begin{equation*}
		\mathcal{Q}_{k+1}(0)=A_{0}\mathcal{Q}_{k}(0)+A_{1}\mathcal{Q}_{k}(-\tau).
	\end{equation*}
	
	Suppose that the formula \eqref{one} is true for $l\in \mathbb{N}_{0}$. Then by applying the formula \eqref{recursive-2} for $l$-th case, we prove the statement is true for $(l+1)\in \mathbb{N}_{0}$ as follows:
	\begin{align*}
		\mathcal{Q}_{k+1}((l+1)\tau)&=\sum_{m=l+1}^{k}A_{0}^{k-m}A_{1}\mathcal{Q}_{m}(l\tau)\\
		&=A_{0}\sum_{m=l+1}^{k-1}A_{0}^{k-1-m}A_{1}\mathcal{Q}_{m}(l\tau)+A_{1}\mathcal{Q}_{k}(l\tau)\\
		&=A_{0}\mathcal{Q}_{k}((l+1)\tau)+A_{1}\mathcal{Q}_{k}(l\tau).
	\end{align*}
	This implies that the formula \eqref{one} is true for arbitrary $l\in \mathbb{N}_{0}$.
	
	To show $(ii)$, we will use proof by induction with regard to $l\in \mathbb{N}_{0}
	$ for a fixed $k\in \mathbb{N}_{0}$ via the formula \eqref{recursive-2}. Since $A_{0}A_{1}=A_{1}A_{0}$, for $l=0$, we have
	\begin{equation*}
		\mathcal{Q}_{k}(0)=A_{0}^{k}=\binom{k}{0}A_{0}^{k}A_{1}^{0}. 
	\end{equation*}
	Assume that the formula \eqref{two} is true for $l=n \in \mathbb{N}_{0}$:
	\begin{equation*}
		\mathcal{Q}_{k+1}(n\tau)=\binom{k}{n}A_{0}^{k-n}A_{1}^{n}, \quad  k\in \mathbb{N}_{0}.
	\end{equation*}
	Let us prove it for $l=n+1$ as below:
	\begin{align*}
		\mathcal{Q}_{k+1}((n+1)\tau)&=\sum_{m=n+1}^{k}A_{0}^{k-m}A_{1}\mathcal{Q}_{m}(n\tau)\\
		&=\sum_{m=n+1}^{k}A_{0}^{k-m}A_{1}\binom{m-1}{n}A_{0}^{m-1-n}A_{1}^{n}\\
		&=\sum_{m=n+1}^{k}\binom{m-1}{n}A_{0}^{k-1-n}A_{1}^{n+1}\\
		&=\binom{k}{n+1}A_{0}^{k-(n+1)}A_{1}^{n+1},
	\end{align*}
	where we have used for $k\geq n+1$ the following identity:
	\begin{equation*}
		\sum_{m=n+1}^{k}\binom{m-1}{n}=\binom{n}{n}+\binom{n+1}{n}+\ldots+\binom{k-1}{n}=\binom{k}{n+1}.
	\end{equation*}
	It follows that the formula \eqref{two} is true for any $l\in \mathbb{N}_{0}$. The proof is complete.
\end{proof}

\begin{remark}
	Using the formulae \eqref{one} and \eqref{two} together, we obtain the following relation for permutable bounded linear operators $A_{0}$ and $A_{1}$:
	\begin{align}\label{binom}
		\binom{k}{l}A_{0}^{k-l}A_{1}^{l}&=A_{0}\binom{k-1}{l}A_{0}^{k-l-1}A_{1}^{l}+A_{1}\binom{k-1}{l-1}A_{0}^{k-l-1}A_{1}^{l-1}\nonumber\\
		&=\binom{k-1}{l}A_{0}^{k-l}A_{1}^{l}+\binom{k-1}{l-1}A_{0}^{k-l-1}A_{1}^{l}, \quad   k,l\in\mathbb{N}_{0}.
	\end{align}
	It is easy to see that \eqref{binom} is a Pascal's rule of binomial coefficients for bounded linear operators (especially matrices). Analogously, the \textit{decomposition formula} \eqref{one} is a \textit{generalisation of Pascal's rule} for non-permutable bounded linear operators (especially, matrices).
\end{remark}
\begin{remark}
	It should be noted that a similar operator family $\left\lbrace \mathcal{Q}_{k,l}: k,l\in \mathbb{N}_{0}\right\rbrace \subset  \mathcal{L}\left( X\right)$ is proposed to investigate the properties of solutions to fractional-order multi-term evolution equations and functional evolution equations in \cite{functional-1,functional-2}, respectively. Furthermore, a linear operator family $\left\lbrace \mathcal{Q}_{k,l}: k,l\in \mathbb{N}_{0}\right\rbrace \subset  \mathcal{L}\left( X\right)$  is satisfying the following corresponding properties:
	
	$(i)$ For any (permutable and non-permutable) linear operators $A_{0},A_{1} \in \mathcal{L}\left( X\right)$, we have
	\begin{align*}
		\mathcal{Q}_{k,l}&=A_{0}\mathcal{Q}_{k-1,l}+A_{1}\mathcal{Q}_{k,l-1}, \quad   k,l\in\mathbb{N}_{0}.
	\end{align*}
	
	$(ii)$ If $A_{0}A_{1}=A_{1}A_{0}$, then we have
	\begin{equation*}
		\mathcal{Q}_{k,l}=\binom{k+l}{l}A_{0}^{k}A_{1}^{l}, \quad  k,l\in \mathbb{N}_{0}.
	\end{equation*}
	
	$(iii)$ For non-permutable linear operators $A_{0},A_{1} \in \mathcal{L}\left( X\right)$, we have:
	\begin{equation*}
		\sum_{l=0}^{k}\mathcal{Q}_{k-l,l}=\Big(A_{0}+A_{1}\Big)^{k}, \quad k\in \mathbb{N}_{0}.
	\end{equation*}
	
	$(iv)$ For permutable linear operators $A_{0},A_{1} \in \mathcal{L}\left( X\right)$, we have:
	\begin{equation*}
		\sum_{l=0}^{k}\binom{k}{l}A_{0}^{k-l}A_{1}^{l}=\Big(A_{0}+A_{1}\Big)^{k}, \quad  k\in \mathbb{N}_{0}.
	\end{equation*}
\end{remark}

Therefore, by Theorem \ref{non-permutable-1}, we can use another suggestive explicit formula for the delayed perturbation of an operator-valued exponential function as follows.

\begin{definition}
	The delayed perturbation of an operator-valued exponential function $\mathcal{S}(\cdot;\tau):\mathbb{R} \to \mathcal{L}\left(X \right)$ generated by  linear operators $A_{0},A_{1} \in\mathcal{L}\left(X \right) $ is defined by 
	\begin{equation}\label{defn0}
		\mathcal{S}(t;\tau)\coloneqq\begin{cases}
			\Theta, \quad  -\infty\leq t<0, \\
			I, \quad t=0, \\
			\sum\limits_{k=0}^{\infty}\mathcal{Q}_{k+1}(0)\frac{t^{k}}{k!}+\sum\limits_{k=1}^{\infty}\mathcal{Q}_{k+1}(\tau)\frac{(t-\tau)^{k}}{k!}\\\hspace{2.5cm}+\ldots+
			\sum\limits_{k=n}^{\infty}\mathcal{Q}_{k+1}(n\tau)\frac{(t-n\tau)^{k}}{k!}, \quad n\tau< t\leq  (n+1)\tau, \quad n\in \mathbb{N}_{0}.
		\end{cases}
	\end{equation}
\end{definition}

It is obvious that operator coefficients of the first series of \eqref{defn0} are the elements of the first column ($l=0$) of the above table, operator coefficients of the second series of \eqref{defn0} are elements of the second column $(l=1)$ of the table and so on.

Rearranging the terms we can write the delayed perturbation of an operator-valued exponential function as follows:

\begin{align}\label{shifting}
	\mathcal{S}(t;\tau)&=\mathcal{Q}_{1}(0)+\Big( \mathcal{Q}_{2}(0)t+\mathcal{Q}_{2}(\tau)(t-\tau)\Big) \nonumber \\
	&+\left( \mathcal{Q}_{3}(0)\frac{t^{2}}{2!}+\mathcal{Q}_{3}(\tau)\frac{(t-\tau)^{2}}{2!}+\mathcal{Q}_{3}(2\tau)\frac{(t-2\tau)^{2}}{2!}\right) \nonumber\\
	&+\ldots+\left( \mathcal{Q}_{n+1}(0)\frac{t^{n}}{n!}+
	\mathcal{Q}_{n+1}(\tau)\frac{(t-\tau)^{n}}{n!}+\ldots+
	\mathcal{Q}_{n+1}(n\tau)\frac{(t-n\tau)^{n}}{n!}
	\right) \nonumber\\
	&+\Big( \mathcal{Q}_{n+2}(0)\frac{t^{n+1}}{(n+1)!}+
	\mathcal{Q}_{n+2}(\tau)\frac{(t-\tau)^{n+1}}{(n+1)!}+\ldots+\mathcal{Q}_{n+2}(n\tau)\frac{(t-n\tau)^{n+1}}{(n+1)!}\Big) +\ldots + \nonumber\\
	&=\sum_{k=0}^{\infty}\sum_{l=0}^{k}\mathcal{Q}_{k+1}(l\tau)\frac{(t-l\tau)_{+}^{k}}{k!}, \quad n\tau <t \leq (n+1)\tau,\quad n\in \mathbb{N}_{0}.
\end{align}

We can propose an elegant representation formula for the delayed perturbation of an operator-valued exponential function that is equivalent to the formula \eqref{shifting} and this behaviour helps us to understand the delayed construction of operator-valued functions.

To do this, we introduce a \textit{shift operator} also known as \textit{translation operator} from \textit{operational calculus} is an operator $\mathbb{T}_{\tau}$ for $\tau\in \mathbb{R}$ that takes a function $t\mapsto f(t)$ on $\mathbb{R}$ to its translation $t\mapsto f(t+\tau)$:
\begin{equation*}
	\mathbb{T}_{\tau}f(t) \coloneqq f(t+\tau).
\end{equation*}
For the notation of our results, we will use the following \textit{Lagrange translation formula} for \textit{shift operators}:
\begin{equation*}
	\mathbb{T}_{\tau}f(t)=f(t+\tau)\coloneqq
	\exp\left( \tau \frac{\mathrm{d}}{\mathrm{d}t}\right)  f(t).
\end{equation*}
Using the shift operator we rewrite the explicit formula \eqref{shifting} as below:
\begin{align}\label{shift}
	\mathcal{S}(t;\tau)&=\sum_{k=0}^{\infty}\sum_{l=0}^{k}\mathcal{Q}_{k+1}(l\tau)\frac{(t-l\tau)_{+}^{k}}{k!}\nonumber\\
	&=\sum_{k=0}^{\infty}\sum_{l=0}^{k}\mathcal{Q}_{k+1}(l\tau)\exp\left(-l\tau \frac{\mathrm{d}}{\mathrm{d}t}\right) \frac{(t)_{+}^{k}}{k!},  \quad n\tau <t \leq (n+1)\tau, \quad n\in \mathbb{N}_{0}.
\end{align}

Based on the following lemma, we introduce a new representation formula for the delayed perturbation of an operator-valued exponential function via a \textit{translation operator}. This identity is the delayed analogue of the \textit{generalisation of binomial theorem} for non-permutable linear operators $A_{0},A_{1} \in \mathcal{L}\left( X\right)$.
\allowdisplaybreaks
\begin{lemma}
	A linear operator family $\left\lbrace \mathcal{Q}_{k+1}(l\tau): k,l\in \mathbb{N}_{0}\right\rbrace \subset  \mathcal{L}\left( X\right)$ satisfies the following identity:
	\begin{equation}\label{relat}
		\sum_{l=0}^{k}\mathcal{Q}_{k+1}(l\tau)\exp\left(-l\tau \frac{\mathrm{d}}{\mathrm{d}t}\right) =\Big(A_{0}+A_{1}\exp\left( -\tau \frac{\mathrm{d}}{\mathrm{d}t}\right) \Big)^{k}, \quad k\in \mathbb{N}_{0}.
	\end{equation}
\end{lemma}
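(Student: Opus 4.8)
The plan is to prove \eqref{relat} by induction on $k\in\mathbb{N}_{0}$, abbreviating $E\coloneqq\exp\left(-\tau\frac{\mathrm{d}}{\mathrm{d}t}\right)=\mathbb{T}_{-\tau}$ for the backward shift, so that $E^{l}=\exp\left(-l\tau\frac{\mathrm{d}}{\mathrm{d}t}\right)$ and the claimed identity reads $\sum_{l=0}^{k}\mathcal{Q}_{k+1}(l\tau)E^{l}=\left(A_{0}+A_{1}E\right)^{k}$. The decisive structural observation is that $E$ acts only on the time variable $t$, whereas $A_{0},A_{1}\in\mathcal{L}(X)$ act only on the state space $X$; hence $E$ commutes with $A_{0}$, with $A_{1}$, and therefore with every $\mathcal{Q}_{k+1}(l\tau)$, since the latter is a polynomial in $A_{0}$ and $A_{1}$. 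This commutativity is exactly what permits the non-commutative power $\left(A_{0}+A_{1}E\right)^{k}$ to be expanded and reorganised termwise without disturbing the order of $A_{0}$ and $A_{1}$.

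For the base case $k=0$ both sides reduce to $\mathcal{Q}_{1}(0)=A_{0}^{0}=I$. For the inductive step I would assume the identity for $k$ and compute $\left(A_{0}+A_{1}E\right)^{k+1}=\left(A_{0}+A_{1}E\right)\sum_{l=0}^{k}\mathcal{Q}_{k+1}(l\tau)E^{l}$. Distributing and using $EA_{1}\mathcal{Q}_{k+1}(l\tau)=A_{1}\mathcal{Q}_{k+1}(l\tau)E$ yields $\sum_{l=0}^{k}A_{0}\mathcal{Q}_{k+1}(l\tau)E^{l}+\sum_{l=0}^{k}A_{1}\mathcal{Q}_{k+1}(l\tau)E^{l+1}$. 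Reindexing the second sum via $l\mapsto l-1$ and invoking the triangular boundary conventions $\mathcal{Q}_{k+1}(-\tau)=\Theta$ (from \eqref{one}) at the new lower endpoint and $\mathcal{Q}_{k+1}((k+1)\tau)=\Theta$ at the upper endpoint (valid since $\mathcal{Q}_{k+1}(l\tau)=\Theta$ whenever $l\ge k+1$, as recorded in the table) lets me extend both sums to the common range $l=0,\dots,k+1$ and merge them into $\sum_{l=0}^{k+1}\bigl[A_{0}\mathcal{Q}_{k+1}(l\tau)+A_{1}\mathcal{Q}_{k+1}((l-1)\tau)\bigr]E^{l}$.

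Finally I would apply the decomposition formula \eqref{one} in the shifted form $\mathcal{Q}_{k+2}(l\tau)=A_{0}\mathcal{Q}_{k+1}(l\tau)+A_{1}\mathcal{Q}_{k+1}((l-1)\tau)$ to recognise the bracket as $\mathcal{Q}_{k+2}(l\tau)$, obtaining $\sum_{l=0}^{k+1}\mathcal{Q}_{k+2}(l\tau)E^{l}$ and thereby closing the induction. I expect the only genuinely delicate point to be the bookkeeping of the two boundary terms created by the index shift: one must verify that enlarging each sum to the full range $0\le l\le k+1$ contributes only vanishing summands, which is guaranteed precisely by the vanishing of $\mathcal{Q}_{k+1}(l\tau)$ outside $0\le l\le k$ together with the convention $\mathcal{Q}_{k+1}(-\tau)=\Theta$. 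Once the commutativity of $E$ with the state-space operators is granted, the remainder is a routine reindexing argument driven entirely by the generalised Pascal recursion \eqref{one}.
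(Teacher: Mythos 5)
Your proposal is correct and follows essentially the same route as the paper's own proof: induction on $k$, left-multiplication of the inductive hypothesis by $A_{0}+A_{1}\exp\left(-\tau\frac{\mathrm{d}}{\mathrm{d}t}\right)$, reindexing of the shifted sum with the boundary conventions $\mathcal{Q}_{k+1}(-\tau)=\Theta$ and $\mathcal{Q}_{k+1}(l\tau)=\Theta$ for $l\geq k+1$, and recognition of the bracket via the recursion \eqref{one}. If anything, your version is slightly more careful than the paper's, since you state explicitly the commutativity of the shift operator with the state-space operators, which the paper uses only tacitly.
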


\begin{proof}
	To prove the identity \eqref{relat}, we use the mathematical induction principle with respect to $k\in \mathbb{N}_{0} $. For $k=0$, we have $\mathcal{Q}_{1}(0)=I$. For $k=1$, using the first property of $\mathcal{Q}_{k+1}(l\tau)$ and $\mathcal{Q}_{1}(\tau)=\Theta$, we proceed as follows:
	\begin{align*}
		\sum_{l=0}^{1}\mathcal{Q}_{2}(l\tau)\exp\left( -l\tau\frac{\mathrm{d}}{\mathrm{d}t}\right) &=\sum_{l=0}^{1}A_{0}\mathcal{Q}_{1}(l\tau)\exp\left( -l\tau\frac{\mathrm{d}}{\mathrm{d}t}\right)\\ &+\sum_{l=1}^{1}A_{1}\mathcal{Q}_{1}((l-1)\tau)\exp\left( -l\tau\frac{\mathrm{d}}{\mathrm{d}t}\right) \\
		&=A_{0}\mathcal{Q}_{1}(0)+
		A_{1}\mathcal{Q}_{1}(\tau)\exp\left( -\tau\frac{\mathrm{d}}{\mathrm{d}t}\right) +A_{1}\mathcal{Q}_{1}(0)\exp\left( -\tau\frac{\mathrm{d}}{\mathrm{d}t}\right) \\
		&=A_{0}+A_{1}\exp\left( -\tau\frac{\mathrm{d}}{\mathrm{d}t}\right) .
	\end{align*}
	Then, assuming that the identity \eqref{relat} is true for the case of $k=n$:
	\begin{equation}\label{n}
		\sum_{l=0}^{n}\mathcal{Q}_{n+1}(l\tau)\exp\left(-l\tau \frac{\mathrm{d}}{\mathrm{d}t}\right) =\Big(A_{0}+A_{1}\exp\left( -\tau \frac{\mathrm{d}}{\mathrm{d}t}\right) \Big)^{n}, \quad n\in \mathbb{N}_{0}.
	\end{equation}
	and we prove it for $k=n+1$ by using \eqref{n}:
	\begin{align*}
		\Big(A_{0}+A_{1}\exp\left( -\tau \frac{\mathrm{d}}{\mathrm{d}t}\right)\Big)^{n+1}&=\Big(A_{0}+A_{1}\exp\left(-\tau \frac{\mathrm{d}}{\mathrm{d}t}\right)\Big)\Big(A_{0}+A_{1}\exp\left( -\tau \frac{\mathrm{d}}{\mathrm{d}t}\right) \Big)^{n}\\
		&=\Big(A_{0}+A_{1}\exp\left(-\tau \frac{\mathrm{d}}{\mathrm{d}t}\right) \Big)\sum_{l=0}^{n}\mathcal{Q}_{n+1}(l\tau)\exp\left( -l\tau \frac{\mathrm{d}}{\mathrm{d}t}\right) \\
		&=\sum_{l=0}^{n}A_{0}\mathcal{Q}_{n+1}(l\tau)\exp\left( -l\tau \frac{\mathrm{d}}{\mathrm{d}t}\right)\\ &+\sum_{l=0}^{n}A_{1}\mathcal{Q}_{n+1}(l\tau)\exp\left( -(l+1)\tau \frac{\mathrm{d}}{\mathrm{d}t}\right) \\
		&=\sum_{l=0}^{n}A_{0}\mathcal{Q}_{n+1}(l\tau)\exp\left( -l\tau \frac{\mathrm{d}}{\mathrm{d}t}\right)\\
		&+\sum_{l=1}^{n+1}A_{1}\mathcal{Q}_{n+1}((l-1)\tau)\exp\left( -l\tau \frac{\mathrm{d}}{\mathrm{d}t}\right) \\
		&=A_{0}^{n+1}+A_{1}^{n+1}\exp\left( -(n+1)\tau \frac{\mathrm{d}}{\mathrm{d}t}\right) \\
		&+\sum_{l=1}^{n}\Big(A_{0}\mathcal{Q}_{n+1}(l\tau)+A_{1}\mathcal{Q}_{n+1}((l-1)\tau)\Big)\exp\left( -l\tau \frac{\mathrm{d}}{\mathrm{d}t}\right) \\
		&=\sum_{l=0}^{n+1}\Big(A_{0}\mathcal{Q}_{n+1}(l\tau)+A_{1}\mathcal{Q}_{n+1}((l-1)\tau)\Big)\exp\left( -l\tau \frac{\mathrm{d}}{\mathrm{d}t}\right) \\
		&=\sum_{l=0}^{n+1}\mathcal{Q}_{n+2}(l\tau)\exp\left( -l\tau \frac{\mathrm{d}}{\mathrm{d}t}\right) .
	\end{align*}
	Therefore, the identity \eqref{relat} holds true for any $k\in \mathbb{N}_{0}$. The proof is complete.
\end{proof}
\begin{corollary}
	The delayed analogue of the \textit{ binomial theorem} for permutable linear operators $A_{0},A_{1} \in \mathcal{L}\left( X\right)$ is defined by
	\begin{equation*}
		\sum_{l=0}^{k}\binom{k}{l}A_{0}^{k-l}A_{1}^{l}\exp\left(-l\tau \frac{\mathrm{d}}{\mathrm{d}t}\right) =\Big(A_{0}+A_{1}\exp\left( -\tau \frac{\mathrm{d}}{\mathrm{d}t}\right) \Big)^{k}, \quad k\in \mathbb{N}_{0}.
	\end{equation*}
\end{corollary}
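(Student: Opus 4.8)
The plan is to obtain the identity as an immediate specialization of the preceding Lemma rather than as a fresh induction. The Lemma already establishes the identity \eqref{relat}, namely $\sum_{l=0}^{k}\mathcal{Q}_{k+1}(l\tau)\exp(-l\tau \tfrac{\mathrm{d}}{\mathrm{d}t}) = \big(A_{0}+A_{1}\exp(-\tau \tfrac{\mathrm{d}}{\mathrm{d}t})\big)^{k}$, and crucially it does so for \emph{arbitrary} (possibly non-permutable) $A_{0},A_{1}\in\mathcal{L}(X)$, with the operator family $\mathcal{Q}_{k+1}(l\tau)$ built from the recursion \eqref{recursive-2}. The only missing ingredient is therefore the closed form of $\mathcal{Q}_{k+1}(l\tau)$ under the commutativity hypothesis, which is precisely formula \eqref{two}.

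Concretely, I would first invoke part $(ii)$ of the preceding Theorem: under $A_{0}A_{1}=A_{1}A_{0}$ one has $\mathcal{Q}_{k+1}(l\tau)=\binom{k}{l}A_{0}^{k-l}A_{1}^{l}$ for all $k,l\in\mathbb{N}_{0}$. Substituting this expression into the left-hand side of \eqref{relat} turns $\sum_{l=0}^{k}\mathcal{Q}_{k+1}(l\tau)\exp(-l\tau \tfrac{\mathrm{d}}{\mathrm{d}t})$ into $\sum_{l=0}^{k}\binom{k}{l}A_{0}^{k-l}A_{1}^{l}\exp(-l\tau \tfrac{\mathrm{d}}{\mathrm{d}t})$, while the right-hand side is untouched, yielding the asserted identity with no additional computation.

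For readers who prefer a self-contained argument bypassing the recursive machinery, I would also sketch the direct route. The translation operator $\exp(-\tau \tfrac{\mathrm{d}}{\mathrm{d}t})$ acts on the temporal variable $t$ and thus commutes with the constant bounded operators $A_{0}$ and $A_{1}$; combined with the hypothesis $A_{0}A_{1}=A_{1}A_{0}$, this shows that the two summands $A_{0}$ and $A_{1}\exp(-\tau \tfrac{\mathrm{d}}{\mathrm{d}t})$ commute. Consequently the ordinary binomial theorem applies to $\big(A_{0}+A_{1}\exp(-\tau \tfrac{\mathrm{d}}{\mathrm{d}t})\big)^{k}$, expanding it as $\sum_{l=0}^{k}\binom{k}{l}A_{0}^{k-l}\big(A_{1}\exp(-\tau \tfrac{\mathrm{d}}{\mathrm{d}t})\big)^{l}$, and since $A_{1}$ commutes with the shift operator we may collapse $\big(A_{1}\exp(-\tau \tfrac{\mathrm{d}}{\mathrm{d}t})\big)^{l}=A_{1}^{l}\exp(-l\tau \tfrac{\mathrm{d}}{\mathrm{d}t})$, recovering exactly the left-hand side.

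There is essentially no genuine obstacle here, since the statement is a corollary in the strict sense. The one point worth flagging explicitly is the commutativity of the translation operator with the constant coefficient operators $A_{0},A_{1}$: this is what reduces the non-commutative generalisation \eqref{relat} to the classical binomial expansion, and it is the hinge on which both proofs turn. Once that observation is recorded, the remainder is purely formal substitution.
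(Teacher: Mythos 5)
Your primary argument---substituting the permutable-case formula \eqref{two}, $\mathcal{Q}_{k+1}(l\tau)=\binom{k}{l}A_{0}^{k-l}A_{1}^{l}$, into the general identity \eqref{relat}---is exactly the route the paper intends, which is why the statement appears as a corollary with no written proof. Your alternative self-contained argument (the translation operator $\exp\left(-\tau\frac{\mathrm{d}}{\mathrm{d}t}\right)$ commutes with the constant operators $A_{0},A_{1}$, so the two summands commute and the classical binomial theorem applies, with $\big(A_{1}\exp\left(-\tau\frac{\mathrm{d}}{\mathrm{d}t}\right)\big)^{l}=A_{1}^{l}\exp\left(-l\tau\frac{\mathrm{d}}{\mathrm{d}t}\right)$) is also correct and is a valid, more elementary bypass of the recursive machinery.
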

\begin{remark}
	The multivariate analogue of this identity is proved for the representation formula of a multi-delayed perturbation of Mittag-Leffler type matrix function generated by $A_{0}$, $A_{1}$, \ldots, $A_{n}\in\mathbb{R}^{n \times n}$ which is a fundamental solution of the fractional-order multi-delay system in \cite{jmaa}.
\end{remark}

Therefore, by using the formulae \eqref{shift} and \eqref{relat} together, we get a new explicit formula for the delayed perturbation of an operator-valued exponential function as below.
\begin{definition}
	The delayed perturbation of an operator-valued exponential function $\mathcal{S}(\cdot;\tau):\mathbb{R}\to  \mathcal{L}\left(X\right)$ generated by linear operators $A_{0},A_{1} \in\mathcal{L}\left(X\right) $ is defined by
	\begin{equation}\label{label}
		\mathcal{S}(t;\tau)=\sum_{k=0}^{\infty}\Big(A_{0}+A_{1}\exp\left( -\tau \frac{\mathrm{d}}{\mathrm{d}t}\right) \Big)^{k}
		\frac{(t)_{+}^{k}}{k!}, \quad t \in \mathbb{R}.
	\end{equation}
\end{definition}
\begin{remark}
	What is the advantage of this alternative definition? This definition retains a \textit{delayed analogue} of \textit{binomial formula}. Using this representation formula \eqref{label} we can easily determine the binomial expansion of commutative and non-commutative linear bounded operators corresponding to the domain of \textit{piece-wise} defined delayed operator-valued functions.
\end{remark}
\begin{remark}
	Note that, in the case of $\tau=0$, the formula \eqref{label} turns into the following \textit{perturbed} operator-valued exponential function:
	\begin{equation*}
		\mathcal{S}(t)=\sum_{k=0}^{\infty}\Big(A_{0}+A_{1}\Big)^{k}
		\frac{t^{k}}{k!}=\exp\left(A_{0}+A_{1}\right)t , \quad t \in \mathbb{R}.
	\end{equation*}
\end{remark}

The following theorem refers to a fundamental solution of the delay evolution equation \eqref{main equation} with permutable bounded linear operators. Since $A_{0}A_{1}=A_{1}A_{0}$, the fundamental solution is the product of exponential and delayed exponential operator-valued functions.

\begin{theorem}\label{permutable}
	Let $A_{0},A_{1} \in \mathcal{L}(X)$ with zero commutator $[A_{0},A_{1}]\coloneqq A_{0}A_{1}-A_{1}A_{0}=0$. Then, the fundamental solution $\mathcal{S}(\cdot;\tau): [-\tau,T] \to \mathcal{L}\left(X\right)$  of linear homogeneous evolution equation \eqref{main equation} with a discrete delay which is satisfying initial conditions \eqref{data} for $t\in [-\tau,0]$ can be expressed by
	\begin{align}\label{formula}
		&\mathcal{S}(t;\tau)=\exp\left( A_{0}t\right)\exp_{\tau}\left( A_{2}t\right)=\exp_{\tau}\left( A_{2}t\right)\exp\left( A_{0}t\right),\nonumber\\   &A_{2}\coloneqq A_{1}\exp\left( -A_{0}\tau\right) \in \mathcal{L}\left( X\right), \hspace{0.1 cm} n\tau < t\leq  (n+1)\tau, \quad n\in \mathbb{N}_{0}.
	\end{align}
\end{theorem}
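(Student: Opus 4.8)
The plan is to take the explicit non-permutable representation \eqref{formul} of Theorem \ref{non-permutable-1} as the starting point and collapse it using the permutability identity \eqref{two}. Since $[A_0,A_1]=0$, every kernel operator simplifies to $\mathcal{Q}_{k+1}(l\tau)=\binom{k}{l}A_0^{k-l}A_1^l$, so on the slab $n\tau<t\le(n+1)\tau$ the fundamental solution becomes a finite outer sum over $l=0,\dots,n$ of inner power series in $k$. Concretely, the first step is to isolate, for each fixed $l$, the inner series
\begin{equation*}
\sum_{k=l}^{\infty}\binom{k}{l}A_0^{k-l}A_1^l\frac{(t-l\tau)^k}{k!},
\end{equation*}
so that $\mathcal{S}(t;\tau)=\sum_{l=0}^{n}$ of these blocks.

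Next I would evaluate this inner series in closed form. Writing $\binom{k}{l}/k!=1/\big(l!\,(k-l)!\big)$ and reindexing $j=k-l$ collapses the block to
\begin{equation*}
\frac{A_1^l}{l!}(t-l\tau)^l\sum_{j=0}^{\infty}A_0^{j}\frac{(t-l\tau)^{j}}{j!}=\frac{A_1^l}{l!}(t-l\tau)^l\exp\!\big(A_0(t-l\tau)\big),
\end{equation*}
where the last equality invokes the power-series definition \eqref{exp} of the uniformly continuous semigroup $\exp(A_0\cdot)$ and is legitimate because $A_0\in\mathcal{L}(X)$ renders every series here absolutely convergent in $\mathcal{L}(X)$. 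Because the outer sum is already finite ($l=0,\dots,n$ by the piece-wise structure of \eqref{formul}), no delicate rearrangement of a double infinite series is required.

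The third step uses the commutation hypothesis to factor out $\exp(A_0t)$. Since $A_0A_1=A_1A_0$, the operator $A_1$ commutes with $\exp(-A_0 l\tau)$, and $\exp\!\big(A_0(t-l\tau)\big)=\exp(-A_0 l\tau)\exp(A_0 t)$; hence $A_1^l\exp\!\big(A_0(t-l\tau)\big)=\big(A_1\exp(-A_0\tau)\big)^l\exp(A_0 t)=A_2^l\exp(A_0 t)$ with $A_2=A_1\exp(-A_0\tau)$. Summing over $l$ and recognising the finite sum as the piece-wise definition \eqref{pure-delayed} of the pure delayed exponential then gives
\begin{equation*}
\mathcal{S}(t;\tau)=\Big(\sum_{l=0}^{n}A_2^l\frac{(t-l\tau)^l}{l!}\Big)\exp(A_0 t)=\exp_{\tau}(A_2 t)\exp(A_0 t),
\end{equation*}
and a final appeal to the mutual commutativity of $A_0$, $A_2=A_1\exp(-A_0\tau)$, and $\exp(A_0 t)$ interchanges the two factors to produce the symmetric formula \eqref{formula}.

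I expect no genuine analytic obstacle; the only care needed is bookkeeping, namely justifying the interchange of the finite $l$-sum with the inner infinite $k$-sum (immediate, as $A_0$ is bounded and the outer sum is finite) and tracking that $A_2^l=A_1^l\exp(-A_0 l\tau)$ rests on permutability. As an independent cross-check, the same identity follows from Theorem \ref{permutable-case}: the bounded generator $A_0$ produces the uniformly continuous group $\mathcal{T}(t)=\exp(A_0 t)$ with $\mathcal{T}(\tau)^{-1}=\exp(-A_0\tau)$, permutability makes $\{\mathcal{T}(t)\}_{t\in\mathbb{R}}$ commute with $A_1$, and the substitution $A_1\mathcal{T}(\tau)^{-1}=A_2$ recovers \eqref{formula} at once.
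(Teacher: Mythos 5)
Your proof is correct, but it follows a genuinely different route from the paper's primary proof. The paper proves Theorem \ref{permutable} by going back to the recursive definition \eqref{seq} and evaluating each convolution integral in closed form: since $A_{0}A_{1}=A_{1}A_{0}$, the operator $A_{1}$ passes through the exponentials, so each step reduces to integrating a polynomial and one obtains by induction $S_{n}(t,n\tau)=\exp\left(A_{0}t\right)A_{1}^{n}\exp\left(-nA_{0}\tau\right)\frac{(t-n\tau)^{n}}{n!}$; the finite sum $\sum_{l=0}^{n}S_{l}(t,l\tau)$ is then recognised as $\exp\left(A_{0}t\right)\exp_{\tau}\left(A_{2}t\right)$. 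You instead specialise the already-established non-permutable representation \eqref{formul} through the identity \eqref{two}, reindex each inner series via $j=k-l$, and factor out $\exp\left(A_{0}t\right)$; this trades integration for a series rearrangement (harmless, as you note, since the outer sum is finite and every inner series converges absolutely for bounded $A_{0}$) plus the combinatorial identity \eqref{two}, which the paper proves separately. What your route buys is that no new computation with the recursion \eqref{seq} is needed once Theorem \ref{non-permutable-1} and \eqref{two} are available; what the paper's direct route buys is a short self-contained induction that does not depend on the binomial identity at all. It is worth pointing out that the paper itself records both of your arguments as remarks immediately following the theorem: the remark combining \eqref{formul} with \eqref{two} is essentially your main argument carried out in detail, and the remark invoking Corollary \ref{cor} of Theorem \ref{permutable-case} (with $\mathcal{T}(\tau)^{-1}=\exp\left(-A_{0}\tau\right)$) is precisely your closing cross-check, so your proposal is fully consistent with the paper even though it bypasses the proof the authors chose as primary.
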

\begin{proof}
	Since bounded linear operators $A_{0},A_{1}\in\mathcal{L}\left(X \right)$ are permutable, i.e.  $A_{0}A_{1}=A_{1}A_{0}$, we have $\exp\left( A_{0}(t-s)\right) A_{1}=A_{1}\exp\left( A_{0}(t-s)\right)$. By making use of this relation, we derive the following cases, recursively.
	Firstly, for $n=0$, we attain that
	\begin{equation*}
		S_{0}(t,0)=\exp\left( A_{0}t\right) , \quad t\geq 0.
	\end{equation*}
	For the case of $n=1$, we have
	\begin{align*}
		S_{1}(t,\tau)&=\int_{\tau}^{t}\exp\left( A_{0}(t-s)\right) A_{1}\exp\left( A_{0}(s-\tau)\right) \mathrm{d}s\\
		&=\exp\left( A_{0}(t-\tau)\right) A_{1}(t-\tau)\\
		&=\exp\left( A_{0}t\right) A_{1}\exp\left( -A_{0}\tau\right) (t-\tau), \quad t\geq \tau.
	\end{align*}
	Similarly, for $n=2$, it follows that
	\begin{align*}
		S_{2}(t,2\tau)&=\int_{2\tau}^{t}
		\exp\left( A_{0}(t-s)\right) A_{1}S_{1}(s-\tau,\tau)\mathrm{d}s\\
		&=\int_{2\tau}^{t}\exp\left( A_{0}(t-s)\right) A_{1}\exp\left( A_{0}(s-2\tau)\right) A_{1}(s-2\tau)\mathrm{d}s\\
		&=\exp\left( A_{0}(t-2\tau)\right) A_{1}^{2}\int_{2\tau}^{t}(s-2\tau)\mathrm{d}s\\
		&=\exp\left( A_{0}t\right) A_{1}^{2}\exp\left( -2A_{0}\tau\right) \frac{(t-2\tau)^{2}}{2!}, \quad t\geq 2\tau.
	\end{align*}
	\allowdisplaybreaks
	In a recursive way, for the general $n$-th case, we acquire
	\begin{align*}
		S_{n}(t,n\tau)
		=\exp\left( A_{0}t\right) A_{1}^{n}\exp\left( -nA_{0}\tau\right) \frac{(t-n\tau)^{n}}{n!}, \quad t\geq n\tau, \quad n\in \mathbb{N}_{0}.
	\end{align*}
	
	Therefore, the fundamental solution $\mathcal{S}(\cdot;\tau):[-\tau,T]\to \mathcal{L}\left(X \right) $ of \eqref{main equation} with initial conditions \eqref{data} is a summation of a finite number above terms and we get the following result:
	\begin{align*}
		\mathcal{S}(t;\tau)=\sum_{l=0}^{n}S_{l}(t,l\tau)&=\exp\left( A_{0}t\right) \sum_{l=0}^{n}
		A_{1}^{l}\exp\left( -lA_{0}\tau\right) \frac{(t-l\tau)^{l}}{l!}\\
		&=\sum_{l=0}^{n}
		A_{1}^{l}\exp\left( -lA_{0}\tau\right) \frac{(t-l\tau)^{l}}{l!}\exp\left( A_{0}t\right)\\
		&=\exp\left( A_{0}t\right) \exp_{\tau}\left( A_{2}t\right)=\exp_{\tau}\left( A_{2}t\right)\exp\left( A_{0}t\right),\\
		A_{2}&=A_{1}\exp\left( -A_{0}\tau\right) \in \mathcal{L}\left(X \right), \quad  n\tau < t\leq  (n+1)\tau, \quad n\in \mathbb{N}_{0}.
	\end{align*}
	The proof is complete.
\end{proof}

\begin{remark}
	Alternatively, we can prove the formula \eqref{formula} directly with the result of Corollary \ref{cor}.
	By using the formula \eqref{exp}, we derive the following result: 
	\begin{align*}
		\mathcal{S}(t;\tau)&=\sum\limits_{l=0}^{n}\Big[A_{1}\left( \exp\left(A_{0}\tau\right)\right) ^{-1} \Big]^{l}\frac{(t-l\tau)^{l}}{l!}\exp\left( A_{0}t\right)\\
		&=\sum\limits_{l=0}^{n}\Big[A_{1}\exp\left( -A_{0}\tau\right) \Big]^{l}\frac{(t-l\tau)^{l}}{l!}\exp\left( A_{0}t\right)\\
		&=\exp\left( A_{0}t\right)\sum\limits_{l=0}^{n}\Big[A_{1}\exp\left( -A_{0}\tau\right) \Big]^{l}\frac{(t-l\tau)^{l}}{l!}\\
		&=\exp_{\tau}\left(A_{2}t\right)\exp\left(A_{0}t\right)=\exp\left(A_{0}t\right) \exp_{\tau}\left(A_{2}t\right),\\ A_{2}&=A_{1}\exp\left( -A_{0}\tau\right) \in \mathcal{L}\left(X \right), \quad  n\tau < t\leq  (n+1)\tau, \quad n\in \mathbb{N}_{0}.
	\end{align*}	
\end{remark}

\begin{remark}
	In addition, we can also prove the formula \eqref{formula} by using the formulae \eqref{formul} and \eqref{two} together, as follows:
	\begin{align*}
		\mathcal{S}(t;\tau)&=
		\sum_{k=l}^{\infty}\sum_{l=0}^{n}\mathcal{Q}_{k+1}(l\tau)\frac{(t-l\tau)^{k}}{k!}\\
		&=\sum_{k=0}^{\infty}\mathcal{Q}_{k+1}(0)\frac{t^{k}}{k!}+\sum_{k=1}^{\infty}\mathcal{Q}_{k+1}(\tau)\frac{(t-\tau)^{k}}{k!}+\ldots+\sum_{k=n}^{\infty}\mathcal{Q}_{k+1}(n\tau)\frac{(t-n\tau)^{k}}{k!}\\
		&=\sum_{k=0}^{\infty}A_{0}^{k}\frac{t^{k}}{k!}+\sum_{k=1}^{\infty}\binom{k}{1}A_{0}^{k-1}A_{1}\frac{(t-\tau)^{k}}{k!}+\ldots+\sum_{k=n}^{\infty}\binom{k}{n}A_{0}^{k-n}A_{1}^{n}\frac{(t-n\tau)^{k}}{k!}\\
		&=\sum_{k=0}^{\infty}A_{0}^{k}\frac{t^{k}}{k!}+\sum_{k=0}^{\infty}\binom{k+1}{1}A_{0}^{k}A_{1}\frac{(t-\tau)^{k+1}}{(k+1)!}+\ldots+\sum_{k=0}^{\infty}\binom{k+n}{n}A_{0}^{k}A_{1}^{n}\frac{(t-n\tau)^{k+n}}{(k+n)!}\\
		&=\sum_{k=0}^{\infty}A_{0}^{k}\frac{t^{k}}{k!}+ \sum_{k=0}^{\infty}A_{0}^{k}\frac{(t-\tau)^{k}}{k!}A_{1}(t-\tau)+\ldots+\sum_{k=0}^{\infty}A_{0}^{k}\frac{(t-n\tau)^{k}}{k!}A_{1}^{n}\frac{(t-n\tau)^{n}}{n!}\\
		&=\exp\left(A_{0}t\right)+\exp\left(A_{0}(t-\tau)\right)A_{1}(t-\tau)+\ldots+\exp\left(A_{0}(t-n\tau)\right)A_{1}^{n}\frac{(t-n\tau)^{n}}{n!}\\
		&=\exp\left(A_{0}t\right)\Big[I+A_{1}\exp\left(-A_{0}\tau\right)(t-\tau)
		+\ldots+\Big(A_{1}\exp\left(-A_{0}\tau\right)\Big)^{n}\frac{(t-n\tau)^{n}}{n!}\Big]\\
		&=\exp\left( A_{0}t\right) \sum_{l=0}^{n}\Big[A_{1}\exp\left( -A_{0}\tau\right) \Big]^{l}\frac{(t-l\tau)^{l}}{l!}\\
		&=\sum\limits_{l=0}^{n}\Big[A_{1}\exp\left( -A_{0}\tau\right) \Big]^{l}\frac{(t-l\tau)^{l}}{l!}\exp\left(A_{0}t\right)\\
		&=\exp\left(A_{0}t\right) \exp_{\tau}\left(A_{2}t\right)= \exp_{\tau}\left(A_{2}t\right)\exp\left(A_{0}t\right),\\ A_{2}&=A_{1}\exp\left( -A_{0}\tau\right) \in \mathcal{L}\left(X \right), \quad  n\tau < t\leq  (n+1)\tau, \quad n\in \mathbb{N}_{0}.
	\end{align*}
\end{remark}	

\begin{remark}
	It should be noted that this particular case is a natural extension of the results are attained in \cite{Khusainov-ordinary-2} in terms of commutative matrix coefficients. Moreover, corresponding results are derived for fractional-order time-delay systems in \cite{huseynov-mahmudov,control}. 
\end{remark}

Directly by means of the formula \eqref{formula}, we can obtain the following special cases, which are the fundamental solutions of the evolution equation with a \textit{pure delay} and	\textit{delay-free} systems without using the above \textit{table}.
\begin{corollary}
	Let $\mathcal{S}(t;\tau)$, $t\in [-\tau,T]$ be the fundamental solution of delay evolution equation \eqref{main equation} with initial conditions \eqref{data}. Then the following assertions hold true:
	
	$(i)$ If $A_{0}=\Theta$, then the delayed perturbation of an operator-valued exponential function turns into the pure delayed operator-valued exponential function and it can be represented by
	\begin{equation}
		\mathcal{S}(t;\tau)=\exp_{\tau}\left(A_{1}t\right) =\sum_{l=0}^{n}A_{1}^{l}\frac{(t-l\tau)^{l}}{l!},  \quad n\tau< t\leq (n+1)\tau, \quad n\in \mathbb{N}_{0}.
	\end{equation}
	
	$(ii)$ If $A_{1}=\Theta$, then the delayed perturbation of an operator-valued exponential function becomes classical operator-valued exponential function and it can be expressed by
	\begin{equation}
		\mathcal{S}(t;\tau)=\exp\left( A_{0}t\right) =\sum_{k=0}^{\infty}A_{0}^{k}\frac{t^{k}}{k!}, \quad t\geq 0.
	\end{equation}
\end{corollary}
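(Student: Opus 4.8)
The plan is to read off both assertions directly from the closed-form \eqref{formula} of Theorem \ref{permutable}, after first noting that the permutability hypothesis is met automatically in each case: the null operator $\Theta$ commutes with every bounded linear operator, so whenever $A_{0}=\Theta$ or $A_{1}=\Theta$ one has $A_{0}A_{1}=A_{1}A_{0}=\Theta$, i.e. $[A_{0},A_{1}]=0$, and Theorem \ref{permutable} applies verbatim. Thus in both parts it suffices to substitute the vanishing coefficient into $\mathcal{S}(t;\tau)=\exp\left(A_{0}t\right)\exp_{\tau}\left(A_{2}t\right)$ with $A_{2}=A_{1}\exp\left(-A_{0}\tau\right)$ and simplify.

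For assertion $(i)$ I would put $A_{0}=\Theta$. From the exponential series $\exp\left(A_{0}t\right)=\sum_{k=0}^{\infty}A_{0}^{k}t^{k}/k!$ every term with $k\geq 1$ vanishes, leaving $\exp\left(\Theta t\right)=I$; likewise $\exp\left(-A_{0}\tau\right)=I$, so $A_{2}=A_{1}I=A_{1}$. Substituting into \eqref{formula} collapses the product to $\mathcal{S}(t;\tau)=I\cdot\exp_{\tau}\left(A_{1}t\right)=\exp_{\tau}\left(A_{1}t\right)$, and the claimed finite-sum expression $\sum_{l=0}^{n}A_{1}^{l}(t-l\tau)^{l}/l!$ on $n\tau<t\leq(n+1)\tau$ is then exactly the piece-wise definition \eqref{pure-delayed} of the pure delayed operator-valued exponential function.

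For assertion $(ii)$ I would put $A_{1}=\Theta$. Then $A_{2}=\Theta\exp\left(-A_{0}\tau\right)=\Theta$, and evaluating the pure delayed exponential at the null operator via \eqref{pure-delayed} gives $\exp_{\tau}\left(\Theta t\right)=I$ for every $t\geq 0$, since each delayed term with $l\geq 1$ carries a factor $\Theta^{l}=\Theta$. Plugging into \eqref{formula} yields $\mathcal{S}(t;\tau)=\exp\left(A_{0}t\right)\cdot I=\exp\left(A_{0}t\right)=\sum_{k=0}^{\infty}A_{0}^{k}t^{k}/k!$, as stated.

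I do not expect any genuine obstacle here; the argument is pure specialisation, and the only points demanding care are the correct evaluation of the two building-block functions at the null operator, namely $\exp\left(\Theta t\right)=I$ and $\exp_{\tau}\left(\Theta t\right)=I$, together with the bookkeeping of which of the two factors in \eqref{formula} reduces to the identity in each case. As a sanity check one could bypass Theorem \ref{permutable} altogether and reason from the recursion \eqref{seq}: when $A_{1}=\Theta$ every $S_{n}$ with $n\geq 1$ vanishes and only $S_{0}(t,0)=\exp\left(A_{0}t\right)$ survives, while when $A_{0}=\Theta$ each convolution integral reduces to an iterated integral of a constant and produces the monomial $A_{1}^{l}(t-l\tau)^{l}/l!$; invoking the closed form \eqref{formula} is simply the most economical route.
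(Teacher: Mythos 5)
Your proposal is correct and follows exactly the paper's route: the paper's proof is the one-line remark that the corollary ``follows directly from Theorem \ref{permutable}'', and your argument simply carries out that specialisation (checking $[A_{0},A_{1}]=0$ when one operator is $\Theta$, then evaluating $\exp\left(\Theta t\right)=I$, $\exp_{\tau}\left(\Theta t\right)=I$ in the formula $\mathcal{S}(t;\tau)=\exp\left(A_{0}t\right)\exp_{\tau}\left(A_{2}t\right)$). Your filled-in details, including the sanity check via the recursion \eqref{seq}, are exactly what the paper leaves implicit.
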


\begin{proof}
	The proof of this Corollary follows from directly Theorem \ref{permutable}. So, we pass over it here.
\end{proof}

\begin{remark}
	Note that the first part is a natural generalisation of pure delayed exponential matrix function which is studied by Khusainov and Shuklin in \cite{Khusainov-ordinary-1} for linear differential equation with matrix coefficient and pure delay, and  the second part is well-known \textit{uniformly} continuous semigroup in a Banach space $X$ \cite{engel-nagel}.
\end{remark}

\textbf{Uniform continuity property:}
Note that in Chapter \ref{sect:main}, we have proved that if $A_{0}$ is an infinitesimal generator of the $C_{0}$-semigroup $\left\lbrace \mathcal{T}(t)\right\rbrace_{t\geq 0}$ and $A_{1}\in \mathcal{L}\left(X\right)$, then a one-parameter family of bounded linear operators $\mathcal{S}(t;\tau)$ is \textit{continuous} with respect to \textit{strong operator topology} for each $t\in[0,\infty)$. 
Analogously, one can show that $\mathcal{S}(t;\tau)$ generated by $A_{0},A_{1}\in \mathcal{L}\left(X\right)$ is \textit{uniformly continuous} with respect to $t$ on $[0,\infty)$.
\begin{align}\label{b}
	\norm{\mathcal{S}(t;\tau)-I}&=\norm{\sum_{k=1}^{\infty}\sum_{l=0}^{k}\mathcal{Q}_{k+1}(l\tau)\frac{(t-l\tau)_{+}^{k}}{k!}}\nonumber\\
	&\leq 
	\sum_{k=1}^{\infty}\sum_{l=0}^{k}\norm{\mathcal{Q}_{k+1}(l\tau)}
	\frac{(t-l\tau)_{+}^{k}}{k!}\nonumber\\
	&\leq \sum_{k=1}^{\infty}\Big(\sum_{l=0}^{k}\binom{k}{l}\norm{A_{0}}^{k-l}\norm{A_{1}}^{l}\Big)
	\frac{t^{k}}{k!}\nonumber\\
	&=\sum_{k=1}^{\infty}\Big(\norm{A_{0}}+\norm{A_{1}}\Big)^{k}\frac{t^{k}}{k!}\nonumber\\
	&=\exp\left( \Big(\norm{A_{0}}+\norm{A_{1}}\Big)t\right)-I, \quad t\geq0,
\end{align}
where we have used that
\begin{equation*}
	\norm{\mathcal{Q}_{k+1}(l\tau)}\leq \binom{k}{l}\norm{A_{0}}^{k-l}\norm{A_{1}}^{l},	
\end{equation*}	
and hence, it tends to zero as $t\to 0_{+}$.
Therefore, $\lim\limits_{t \to 0_{+}} \norm{\mathcal{S}(t;\tau)-I}=0$ and the one-parameter operator family $\left\lbrace \mathcal{S}(t;\tau) \right\rbrace_{t\geq 0}$ is uniformly continuous with respect to operator norm $\norm{\cdot}$ associated with $X$. 

For \textit{delay-free time evolution linear autonomous systems}, \textit{uniformly continuous} one-parameter family of bounded linear operators  $\left\lbrace \mathcal{S}(t)\right\rbrace _{t\geq 0}$ equals to the \textit{perturbed} operator-valued exponential function $ \left\lbrace \exp\left((A_{0}+A_{1})t\right)\right\rbrace_{t\geq 0}$ which is satisfying the \textit{semigroup} property. To show this, we use Cauchy product formula for double series as follows:
\begin{align*}
	\mathcal{S}(t)\mathcal{S}(s)&=\sum_{k=0}^{\infty}\Big(A_{0}+A_{1}\Big)^{k}
	\frac{t^{k}}{k!}\sum_{n=0}^{\infty}\Big(A_{0}+A_{1} \Big)^{n}
	\frac{s^{n}}{n!}\\
	&=\sum_{k=0}^{\infty}\sum_{n=0}^{k}\Big(A_{0}+A_{1}\Big)^{k-n}\Big(A_{0}+A_{1} \Big)^{n}\frac{t^{k-n}}{(k-n)!}\frac{s^{n}}{n!}\\
	&=\sum_{k=0}^{\infty}\left( \sum_{n=0}^{k}\binom{k}{n}
	t^{k-n}s^{n}\right) \frac{\Big(A_{0}+A_{1}\Big)^{k}}{k!}\\
	&=\sum_{k=0}^{\infty}\Big(A_{0}+A_{1}\Big)^{k}\frac{( t+s)^{k}}{k!}=\mathcal{S}(t+s), \quad t,s\geq 0.
\end{align*}

However, in our case, the same property is not working for the operator family of bounded linear operators $ \left\lbrace \mathcal{S}(t;\tau)\right\rbrace_{t\geq 0}$ which is \textit{continuous} with respect to \textit{uniform operator topology}. To show this, we will use the following \textit{counterexample}:

\textbf{Counterexample.}
Let $X=\mathbb{R}^{2}$, $T=2$, $\tau=1$, $A_{0}=\begin{pmatrix}1 & 0 \\0 & 1\end{pmatrix}$ and  $A_{1}=\begin{pmatrix}-1 & 0 \\0 & -1\end{pmatrix}$. It is obvious that $A_{0}A_{1}=A_{1}A_{0}$. Then, by elementary calculations, we obtain  $\exp\Big(A_{0}t\Big)=\begin{pmatrix}e^{t} & 0 \\0 & e^{t}\end{pmatrix}$ for any $t\in [0,2]$ and $A_{2}=A_{1}\exp\Big(-A_{0}\Big)=\begin{pmatrix}-1/e & 0 \\0 & -1/e\end{pmatrix}$. In addition, by \eqref{pure-delayed}, the \textit{pure delayed} exponential matrix function $\exp_{1}\Big(A_{2}t\Big):[-1,2]\to \mathbb{R}^{n\times n}$  is defined by
\begin{equation}\label{pure}
	\exp_{1}\Big(A_{2}t\Big)=\begin{cases}
		\Theta, \quad -1\leq t<0,\\
		I, \quad 0\leq t\leq 1,\\
		I+A_{2}(t-1), \quad 1<t\leq 2.
	\end{cases}
\end{equation}
By making use of the formula \eqref{pure}, for a \textit{left-hand side}, we derive that
\begin{align*}
	\mathcal{S}(t;\tau)\mathcal{S}(s;\tau)&=\exp\Big(A_{0}t\Big) \exp_{1}\Big(A_{2}t\Big) \exp\Big(A_{0}s\Big) \exp_{1}\Big(A_{2}s\Big)\\
	&=\exp\Big(A_{0}(t+s)\Big) \exp_{1}\Big(A_{2}t\Big) \exp_{1}\Big(A_{2}s\Big)\\
	&=\exp\Big(A_{0}(t+s)\Big)\Big[I+A_{2}(t-1)\Big] \Big[I+A_{2}(s-1)\Big]\\
	&=\exp\Big(A_{0}(t+s)\Big)\Big[I+A_{2}(t+s-2)+A_{2}^{2}(ts-t-s-1)\Big]\\
	&=\begin{pmatrix}e^{t+s} & 0 \\0 & e^{t+s}\end{pmatrix}\begin{pmatrix}\frac{-ts+(t+s)(1-e)+2e+2}{e^{2}} & 0 \\0 & \frac{-ts+(t+s)(1-e)+2e+2}{e^{2}}\end{pmatrix}\\
	&\hspace{-3cm}=\begin{pmatrix}e^{t+s-2}\Big[-ts+(t+s)(1-e)+2e+2\Big] & 0 \\0 & e^{t+s-2}\Big[-ts+(t+s)(1-e)+2e+2\Big] \end{pmatrix}, \quad \forall t,s\in[0,2].
\end{align*}
Similarly, for a \textit{right-hand side}, we have:	
\begin{align*}
	\mathcal{S}(t+s;\tau)&=\exp\Big(A_{0}(t+s)\Big) \exp_{1}\Big(A_{2}(t+s)\Big)\\
	&=\exp\Big(A_{0}(t+s)\Big)\Big[I+A_{2}(t+s-1)\Big]\\
	&=\begin{pmatrix}e^{t+s} & 0 \\0 & e^{t+s}\end{pmatrix}\begin{pmatrix}\frac{-(t+s)+e+1}{e} & 0 \\0 & \frac{-(t+s)+e+1}{e}\end{pmatrix}\\
	&=\begin{pmatrix}e^{t+s-1}\Big[-(t+s)+e+1\Big] & 0 \\0 & e^{t+s-1}\Big[-(t+s)+e+1\Big] \end{pmatrix}, \quad \forall t,s\in[0,2].
\end{align*}
It follows that, $\mathcal{S}\left( t;\tau\right) \mathcal{S}\left( s;\tau\right)\neq \mathcal{S}\left( t+s;\tau\right)$ for any $t,s\in[0,2]$. 	

Furthermore, we can say that the similar scenario holds true for the \textit{strongly continuous} family of bounded linear operators  $\left\lbrace \mathcal{S}(t;\tau)\right\rbrace _{t\geq 0}$ (see Section \ref{sect:main}), which does not satisfy the \textit{semigroup property}, i.e., $\mathcal{S}\left( t;\tau\right) \mathcal{S}\left( s;\tau\right)\neq \mathcal{S}\left( t+s;\tau\right)$ for any $t,s \geq 0$.

\section{Application: a delayed heat equation }\label{sect:partial}
Let us take $X=\mathbb{L}^{2}\left( [0,\pi],\mathbb{R}\right) $. We consider the following initial-boundary value problem with homogeneous Dirichlet boundary conditions for a one-dimensional heat equation with a constant delay:
\begin{equation}\label{part-1}
	\begin{cases}
		\frac{\partial}{\partial t}u(x,t)=a^{2}\frac{\partial^{2}}{\partial x^{2}}u(x,t)+bu(x,t-\tau)+\psi(x,t),\quad x \in [0,\pi], \quad t\geq 0,\\
		u(x,t)=\varphi(x,t),\quad  x\in [0,\pi], \quad  t\in[-\tau,0],\\
		u(0,t)=u(\pi,t)=0, \quad t \geq -\tau,
	\end{cases}
\end{equation}
where $a,b \in \mathbb{R}$ and $\tau>0$. 

We define the following unbounded linear operator $A_{0}:\mathcal{D}\left( A_{0}\right) \subseteq X\to X$ as follows:
\begin{equation*}
	A_{0}u=a^{2}\frac{\partial^{2}}{\partial x^{2}}u, \quad a \in \mathbb{R}, \quad  u\in \mathcal{D}\left(A_{0}\right),
\end{equation*} 
with the domain is given by
\begin{equation*}
	\mathcal{D}\left(A_{0}\right) =\left\lbrace u\in X:  u,\frac{\partial}{\partial x}u \hspace{0.2 cm} \text{are absolutely continuous}, \frac{\partial^{2}}{\partial x^{2}}u \in X, \hspace{0.2 cm} u(0)=u(\pi)=0 \right\rbrace.
\end{equation*}
Moreover, a linear bounded operator $A_{1}: X\to X$ is defined by $A_{1}u=bu$ for all $b\in \mathbb{R}$ and $x\in X$.

It is known that $A_{0}$ has a discrete spectrum with eigenvalues of the form $\lambda_{n}=-a^{2}n^{2}$, $n\in \mathbb{N}$ and the corresponding normalized eigenvectors are given by $u_{n}(x)\coloneqq \sqrt{\frac{2}{\pi}}\sin(nx)$, $x\in [0,\pi]$ for any $n\in \mathbb{N}$. Moreover, $\left\lbrace u_{n}: n\in \mathbb{N} \right\rbrace$ is an \textit{orthonormal basis} for $X$ and thus, $A_{0}$ and $A_{1}$  has the following \textit{spectral representations}:
\begin{align*}
	&A_{0}u=\sum\limits_{n=1}^{\infty}-a^{2}n^{2}\langle u,u_{n}\rangle u_{n},\quad a \in \mathbb{R}, \quad  u\in \mathcal{D}\left(A_{0}\right) ,\\
	&A_{1}u=\sum\limits_{n=1}^{\infty}b\langle u,u_{n}\rangle u_{n},\quad b\in \mathbb{R}, \quad  u\in X.
\end{align*}

Furthermore, $A_{0}$ is a closed, densely-defined linear operator and the infinitesimal generator of \textit{$C_{0}$-semigroup} $\left\lbrace \mathcal{T}(t)\right\rbrace_{t \geq 0}$ on $X$ which is represented by
\begin{equation*}
	\mathcal{T}(t)u=\sum\limits_{n=1}^{\infty}\exp\left( -a^{2}n^{2} t\right) \langle u,u_{n}\rangle u_{n},\quad a \in \mathbb{R}, \quad  u\in \mathcal{D}\left(A_{0}\right), \quad t\geq 0.
\end{equation*}

Therefore, the initial-boundary value problem for a one-dimensional heat equation \eqref{part-1} with a constant delay $\tau>0$  can be formulated in abstract sense as follows:
\begin{equation}\label{short}
	\begin{cases}
		\frac{\mathrm{d}}{\mathrm{d}t}u(t)=A_{0}u(t)+A_{1}u(t-\tau)+\psi(t), \quad t\geq 0,\\
		u(t)=\varphi(t), \quad -\tau\leq t\leq 0,
	\end{cases}
\end{equation}
where $u(t)=u(\cdot,t)$, $\varphi(t)=\varphi(\cdot,t)$ and $\psi(t)=\psi(\cdot,t)$.

Since a strongly continuous semigroup $\left\lbrace \mathcal{T}(t)\right\rbrace _{t\geq 0}$ generated by $A_{0}=\frac{\partial^{2}}{\partial x^{2}}$ commutes with $A_{1}\in \mathcal{L}\left(X\right)$, by making use the formula \eqref{partially}, for any $u\in X$, a fundamental solution $\mathcal{S}(\cdot;\tau):[-\tau,\infty) \to \mathcal{L}\left( X\right)$ of \eqref{short} which is satisfying initial conditions $\mathcal{S}(t;\tau)=\Theta$, $-\tau \leq t <0$ and $\mathcal{S}(0;\tau)=I$ can be represented as follows:
\begin{align}\label{sol-3}
	\mathcal{S}(t;\tau)u&=\sum_{n=1}^{\infty}\sum_{k=0}^{\infty}b^{k}\mathcal{T}(t-k\tau)\frac{(t-k\tau)^{k}_{+}}{k!}\langle u,u_{n}\rangle u_{n}\nonumber\\
	&=\sum_{n=1}^{\infty}\sum_{k=0}^{\infty}
	b^{k}\exp\Big(-a^{2}n^{2}(t-k\tau)\Big)\frac{(t-k\tau)_{+}^{k}}{k!} \langle u,u_{n}\rangle u_{n}\nonumber\\
	&=\sum_{n=1}^{\infty}\sum_{k=0}^{\infty}
	\exp\Big(-a^{2}n^{2}t\Big)\Big[b\exp(a^{2}n^{2}\tau)\Big]^{k}\frac{(t-k\tau)_{+}^{k}}{k!} \langle u,u_{n}\rangle u_{n}\nonumber\\
	&=\sum_{n=1}^{\infty}
	\exp\Big(-a^{2}n^{2}t\Big)\exp_{\tau}\Big(b_{n}t\Big) \langle u,u_{n}\rangle u_{n}, \quad t\geq 0,
\end{align}
where $\exp_{\tau}(\cdot):[0,\infty)\to \mathbb{R}$ is a \textit{pure delayed} operator-valued exponential function defined by
\begin{equation*}
	\exp_{\tau}\Big(b_{n}t\Big) =\sum_{k=0}^{\infty}b_{n}^{k}\frac{\Big(t-k\tau\Big)_{+}^{k}}{k!}, \quad b_{n}=b\exp\left(a^{2} n^{2}\tau\right), \quad n\in \mathbb{N}, \quad t\geq 0.
\end{equation*}

Therefore, by using the formulas \eqref{sol-2} and \eqref{sol-3}, the \textit{classical} or \textit{strong} solution $u(\cdot)\in \mathbb{C}\left([-\tau,\infty),X\right)\cap\mathbb{C}^{1}\left([0,\infty),X\right) $  of abstract Cauchy problem \eqref{short} which is satisfying $u(t)\in\mathcal{D} \left(A_{0}\right)$ for all $t\geq 0$ with $\varphi(0)\in \mathcal{D}\left( A_{0}\right)$ can be expressed with the help of a special case of \textit{delayed Dyson-Phillips series} as below:
\begin{align}\label{son}
	u(t)&=\sum_{n=1}^{\infty}\Biggl\{\exp\left( -a^{2}n^{2}t\right) \exp_{\tau}\left( b_{n}t\right) <\varphi(0),u_{n}>\nonumber\\
	&+b\int_{-\tau}^{0}\exp\left( -a^{2}n^{2}(t-\tau-s)\right) \exp_{\tau}\left( b_{n}(t-\tau-s)\right) <\varphi(s),u_{n}>\mathrm{d}s\nonumber\\
	&+\int_{0}^{t}
	\exp\left( -a^{2}n^{2}(t-s)\right) \exp_{\tau}\left( b_{n}(t-s)\right) <\psi(s),u_{n}>\mathrm{d}s\Biggr\}u_{n}, \quad  t\geq  0.
\end{align}

Similarly, with the help of formulae \eqref{sol-1} and \eqref{sol-3}, the classical (strong)  solution $u(\cdot)\in \mathbb{C}^{1}\left([-\tau,\infty),X\right)$  of abstract Cauchy problem \eqref{short} which is satisfying $u(t)\in\mathcal{D} \left(A_{0}\right)$ for all $t\geq 0$ with $\varphi(t)\in \mathcal{D}\left( A_{0}\right)$ for any $t\in[-\tau,0]$ can also be expressed by
\begin{align}\label{son-1}
	u(t)&=\sum_{n=1}^{\infty}\Biggl\{\exp\left( -a^{2}n^{2}(t+\tau)\right) \exp_{\tau}\left( b_{n}(t+\tau)\right) <\varphi(-\tau),u_{n}>\nonumber\\
	&+\int_{-\tau}^{0}\exp\left( -a^{2}n^{2}(t-s)\right) \exp_{\tau}\left( b_{n}(t-s)\right) <\varphi^{\prime}(s)+a^{2}n^{2}\varphi(s),u_{n}>\mathrm{d}s\nonumber\\
	&+\int_{0}^{t}\exp\left( -a^{2}n^{2}(t-s)\right) \exp_{\tau}\left( b_{n}(t-s)\right) <\psi(s),u_{n}>\mathrm{d}s\Biggr\}u_{n},\quad t\geq -\tau.
\end{align}

\begin{remark}
	Currently, in \cite{jde}, Pinto et al. have studied an approximation of a mild solution $u(\cdot) \in X=\mathbb{L}^{2}\left( [0,\pi],\mathbb{R}\right)$ of first-order linear homogeneous abstract differential problem \eqref{short} with a constant delay (where $\psi(t)=0$, $a=1$, $\tau=1$, $A_{1}u=Lu$ with $L \geq 0$), which depends on an initial history condition $\varphi(t)\in X$ for $-1\leq t\leq 0$ and unbounded closed linear operator $A_{0}=\frac{\partial^{2}}{\partial x^{2}}$ generating a $C_{0}$-semigroup on a Banach space $X$. The authors have introduced in \cite{jde}, the mild solution $u(t)$ of the abstract Cauchy problem \eqref{short} associated with $\varphi(t)$, $-1 \leq t \leq 0$, for any $t\in[0,1]$ is given by
	\allowdisplaybreaks
	\begin{align*}
		u(t)&=\sum_{n=1}^{\infty}\exp\left( -n^{2}t\right) <\varphi(0),u_{n}>u_{n}\\
		&+L\sum_{n=1}^{\infty}\frac{\exp\left(-n^{2}t\right) +n^{2}t-1}{n^{4}}<\varphi(0),u_{n}>u_{n}, \quad t\geq 0.
	\end{align*}
	In contrast to \cite{jde}, we can easily deduce a \textit{mild} solution $u(\cdot)\in \mathbb{C}\left([-\tau,\infty),X\right)\cap\mathbb{C}^{1}\left([0,\infty),X\right) $  of \eqref{short} by taking $a=\tau=1$ in \eqref{son} as follows:
	\begin{align*}
		u(t)&=\sum_{n=1}^{\infty}\Biggl\{\exp\left(-n^{2}t\right) \exp_{1}\left( b_{n}t\right) <\varphi(0),u_{n}>\\
		&+L\int_{-1}^{0}\exp\left( -n^{2}(t-1-s)\right) \exp_{1}\left( b_{n}(t-1-s)\right) <\varphi(s),u_{n}>\mathrm{d}s\nonumber\\
		&+\int_{0}^{t}
		\exp\left( -n^{2}(t-s)\right) \exp_{1}\left( b_{n}(t-s)\right) <\psi(s),u_{n}>\mathrm{d}s\Biggr\}u_{n}, \quad  t\geq  0.
	\end{align*}
	Furthermore, an exact explicit formula of a mild solution $u(\cdot)\in \mathbb{C}^{1}\left([-\tau,\infty),X\right) $ of \eqref{short} can be represented by
	\begin{align*}
		u(t)&=\sum_{n=1}^{\infty}\Biggl\{\exp\left( -n^{2}(t+1)\right) \exp_{1}\left( b_{n}(t+1)\right) <\varphi(-1),u_{n}>\\
		&+\int_{-1}^{0}\exp\left( -n^{2}(t-s)\right) \exp_{1}\left( b_{n}(t-s)\right) <\varphi^{\prime}(s)+n^{2}\varphi(s),u_{n}>\mathrm{d}s\\
		&+\int_{0}^{t}\exp\left( -n^{2}(t-s)\right) \exp_{\tau}\left( b_{n}(t-s)\right) <\psi(s),u_{n}>\mathrm{d}s\Biggr\}u_{n},\quad t\geq -1.
	\end{align*}
	where
	\begin{equation*}
		\exp_{1}\left(b_{n}t\right) =\sum_{k=0}^{\infty}b_{n}^{k}\frac{(t-k)_{+}^{k}}{k!}, \quad b_{n}=L\exp\left( n^{2}\right) , \quad n\in \mathbb{N}, \quad t\geq 0.
	\end{equation*}
\end{remark}
\allowdisplaybreaks
\begin{remark}
	Furthermore, one can use Fourier method for the construction of the first boundary value problem \eqref{part-1}.
	Note that the solution of one dimensional heat equation \eqref{part-1} with a discrete delay $\tau>0$ can be expressed with Fourier coefficients which is studied in \cite{Khusainov-partial-1}-\cite{Khusainov-Pokojovy} for $x\in [0,\pi]$: 
	\begin{align*}
		u(x,t)&=\sum_{n=1}^{\infty}\Biggl\{
		\exp\left( -a^{2}n^{2}(t+\tau)\right) \exp_{\tau}\left( b_{n}(t+\tau)\right) \Phi_{n}(-\tau)\\
		&+\int_{-\tau}^{0}\exp\left( -a^{2}n^{2}(t-s)\right) \exp_{\tau}\left( b_{n}(t-s)\right) \Big[\Phi_{n}^{\prime}(s)+a^{2}n^{2}\Phi_{n}(s)\Big]\mathrm{d}s\\
		&+\int_{0}^{t}\exp\left( -a^{2}n^{2}(t-s)\right) \exp_{\tau}\left( b_{n}(t-s)\right) \Psi_{n}(s)\mathrm{d}s\Biggr\}\sin(nx), \quad  t\geq -\tau,\\
	\end{align*}
	or
	\begin{align*}
		u(x,t)&=\sum_{n=1}^{\infty}\Biggl\{\exp\left( -a^{2}n^{2}t\right) \exp_{\tau}\left(b_{n}t\right) \Phi_{n}(0)\\
		&+b\int_{-\tau}^{0}\exp\left( -a^{2}n^{2}(t-\tau-s)\right) \exp_{\tau}\left( b_{n}(t-\tau-s)\right) \Phi_{n}(s)\mathrm{d}s\\
		&+\int_{0}^{t}\exp\left( -a^{2}n^{2}(t-s)\right) \exp_{\tau}\left( b_{n}(t-s)\right) \Psi_{n}(s)\mathrm{d}s\Biggr\}\sin(nx),  \quad  t\geq 0,
	\end{align*}
	where $b_{n}=b\exp\left(a^{2} n^{2}\tau\right)$, $n\in \mathbb{N}$; $\Phi_{n}(\cdot):[-\tau,0]\to\mathbb{R}$ and $\Psi_{n}(\cdot):[0,\infty)\to\mathbb{R}$ for $n\in \mathbb{N}$ are Fourier coefficients of $\varphi(x,t)$  and $\psi(x,t)$, respectively, that is 
	\begin{align*}
		\Phi_{n}(t)&=\frac{2}{\pi}\int_{0}^{\pi}\varphi(\xi,t)\sin(n\xi)\mathrm{d}\xi,\quad t\in [-\tau,0],\\
		\Psi_{n}(t)&=\frac{2}{\pi}\int_{0}^{\pi}\psi(\xi,t)\sin(n\xi)\mathrm{d}\xi, \quad t\in[0,\infty).
	\end{align*}
\end{remark}


\begin{thebibliography}{99}
		
	\bibitem{Kolmanovskii-Myshkis}
	V. Kolmanovskii, A. Myshkis, Applied theory of functional differential
	equations, Dordrecht, The Netherlands: Kluwer, 1992.
	
	\bibitem{Hale}	
	J. K. Hale, Theory of functional differential equations, Applied Mathematical Sciences Series, Vol. 3, 1977.	
	
	\bibitem{Bellman-Cooke}
	R. Bellman, K. L. Cooke, Differential-Difference Equations, Academic, New York, 1963.
	
	\bibitem{Khusainov-ordinary-1}	
	D. Ya. Khusainov, G. V. Shuklin, On relative controllability in systems with pure delay, Prikladnaya Mekhanika, Int. Appl. Mech., 41(2) (2005) 210–221.	
	
	
	
	\bibitem{Khusainov-ordinary-2}	
	D. Ya. Khusainov, G. V. Shuklin, Linear autonomous time-delay system with permutation matrices solving, Stud. Univ. Zilina, 17(1) (2003) 101–108.	
	
	\bibitem{huseynov-mahmudov}
	I. T. Huseynov, N. I. Mahmudov, Delayed analogue of three-parameter Mittag–Leffler functions and their applications to Caputo type fractional time delay differential equations, Math. Methods Appl. Sci., (2021), https://doi .org/10.1002/mma.6761.
	
	\bibitem{mahmudov}
	N. I. Mahmudov, Delayed perturbation of Mittag-Leffler functions their applications to fractional linear delay differential equations, Math. Methods Appl. Sci., (2018) 1–9, https://doi.org/10.1002/mma.5446.
	
	\bibitem{Khusainov-partial-1}	
	D. Ya. Khusainov, A. F. Ivanov, I. V. Kovarzh,  Solution of one heat equation with delay, Nonlinear Oscil., 12(2) (2009) 1–20. 
	
	\bibitem{Khusainov-Pokojovy-Racke}
	D. Ya. Khusainov, M. Pokojovy, R. Racke, Strong and mild extrapolated $L^{2}$
	-solutions to the heat equation with constant delay, SIAM J. Math. Anal.,
	47(1) (2015) 427–454.
	
	\bibitem{Khusainov-Pokojovy}
	D. Ya. Khusainov, M. Pokojovy,
	Solving the Linear 1D Thermoelasticity Equations with Pure Delay, Int. J. Math. Math. Sci., 2015, (2015) 1–11. 
	
	\bibitem{Samoilenko}	
	A. M. Samoilenko, L. M. Serheeva, Construction of global solutions of partial differential equations with deviating arguments in the time variable, J. Math. Sci., 212(4) (2016) 426-441.
	
	\bibitem{ecology}	
	A. Okubo, S. A. Levin, Diffusion and ecological problems: Modern Perspectives, Springer Verlag, New York, Berlin, Heidelberg, 2001.
	
	
	\bibitem{philips}
	R. S. Phillips, Perturbation theory for semigroups of linear operators, Trans. Am. Math. Soc., 74 (1954) 199-221.
	
	\bibitem{perturbation}
	A. Ahmadova, N. I. Mahmudov, J. J. Nieto, Exponential stability and stabilization of fractional stochastic degenerate evolution equations in a Hilbert space: Subordination principle, Evol. Equ. Control Theory, https://doi.org/10.3934/eect.2022008. 
	
	
	\bibitem{per-frac-2}
	I. T. Huseynov, A. Ahmadova, N. I. Mahmudov, Perturbation properties of fractional strongly continuous cosine and sine family operators, ERA, 30(8) (2022) 2911-2940.
	
	
	
	
	\bibitem{travis-webb}
	C. C. Travis, G. F. Webb, Perturbation of strongly continuous cosine family generators, Colloquium Mathematicae, 45(2) (1981) 277-285.	
	
	\bibitem{lutz}
	D. Lutz, On bounded time-dependent perturbations of operator cosine functions, Aequationes Mathematicae, 23 (1981) 197-203.
	
	\bibitem{bazhlekova}
	E. Bazhlekova, Perturbation properties for abstract evolution equations of fractional order, Fract. Cal. Appl. Anal., 2(4) (1999) 359-366.
	
	
	
	
	\bibitem{engel-nagel}
	K.-J. Engel, R. Nagel, One-Parameter Semigroups for Linear Evolution Equations, Vol. 194, Graduate Texts in Mathematics, Springer-Verlag, New York, 2000.
	
	\bibitem{batkai-piazzera-partial}	
	A. Bátkai, S. Piazzera, Semigroups and linear partial differential equations with delay, J. Math. Anal. Appl., 264 (2001) 1–20.
	
	
	\bibitem{batkai-book}	
	A. Bátkai, S. Piazzera, Semigroups for delay equations, Resarch Notes in Mathematics, 10 A.K. Peters: Wellesley MA, 2005.	
	
	\bibitem{jde}
	M. Pinto, F. Poblete, D. Sepúlveda, Approximation of mild solutions of delay differential equations on Banach spaces, J. Differ. Equ., 303(5) (2021) 156-182.
	
	
	
	\bibitem{arendt-batty}
	W. Arendt, C. J. K. Batty, M. Hieber, F. Neubrander, Vector-Valued Laplace Transforms and Cauchy Problems, Second Edition, Vol. 96, Monographs in Mathematics, Birkhäuser, Basel, 2010.
	
	\bibitem{pazy}
	A. Pazy, Semigroups of Linear Operators and Applications to Partial Differential Equations, Springer, Berlin, 1983.
	
	\bibitem{curtain-zwart}
	R. F. Curtain, H. J. Zwart, An Introduction to Infinite Dimensional Linear Systems Theory, Berlin: Springer-Verlag, 1995.
	
	
	\bibitem{mahmudov-almatarneh}
	N. I. Mahmudov, A. M. Almatarneh,
	Stability of Ulam–Hyers and existence of solutions for impulsive time-delay semi-linear systems with non-permutable matrices, Mathematics, 8 (2020) 1-17.
	
	
	
	\bibitem{functional-1}
	N. I. Mahmudov, A. Ahmadova, I. T. Huseynov, A new technique for solving Sobolev type fractional multi-order evolution equations, Comp. Appl. Math., 41(71) (2022) 1-35.
	
	
	
	\bibitem{functional-2} 
	I. T. Huseynov, A. Ahmadova, N. I. Mahmudov, On a study of Sobolev type fractional functional evolution equations, Math. Methods Appl. Sci., 45(9) (2022) 5002-5042.
	
	\bibitem{jmaa} 
	N. I. Mahmudov, Multi-delayed perturbation of Mittag-Leffler type matrix functions, J. Math. Appl. Anal., 505(1) (2021) 125589.
	
	
	
	\bibitem{control}
	A. Ahmadova, I. T. Huseynov, N. I. Mahmudov, Controllability of fractional stochastic delay dynamical systems, Proceed. Inst. Math. Mech. ANAS, 46(2) (2020) 294–320.
\end{thebibliography}
\end{document}